\crefname{hypothesis}{Hypothesis}{Hypotheses}
\title{Convergence of the adaptive finite element discretization based parallel orbital-updating method for eigenvalue problems\thanks{Submitted to the editors DATE.
\funding{This work was supported by the National Natural Science Foundation of China under grants 92270206 and  12571446, the National Key R \& D Program of China under grants 2019YFA0709600 and 2019YFA0709601, and the Strategic Priority Research Program of the Chinese Academy of Sciences under grand XDB0640000. B. Yang also acknowledges the support from the Fundamental Research Funds for the Central Universities and the disciplinary funding of Central University of Finance and Economics.}}}
\author{Xiaoying Dai\thanks{SKLMS, Academy of Mathematics and Systems Science, Chinese Academy of Sciences, Beijing 100190, China; and School of Mathematical Sciences, University of Chinese Academy of Sciences, Beijing 100049, China (\email{daixy@lsec.cc.ac.cn}, \email{liyan2021@lsec.cc.ac.cn}, \email{azhou@lsec.cc.ac.cn}).} \and
Yan Li\footnotemark[2]
\and Bin Yang\thanks{School of Statistics and Mathematics, Central University of Finance and Economics, Beijing 102206, China (\email{binyang@lsec.cc.ac.cn}).}
\and Aihui Zhou\footnotemark[2]}
\begin{document}
	
	\maketitle
	
	\begin{abstract}
It is significant and challenging to solve eigenvalue problems of partial differential operators when many highly accurate eigenpair approximations are required. The adaptive finite element discretization based parallel orbital-updating method, which can significantly reduce the computational cost and enhance the parallel scalability, has been shown to be efficient in electronic structure calculations. However, there is no any mathematical justification for this method in literature. In this paper, we will show the convergence of the method for clustered eigenvalue problems of linear partial differential operators.
\end{abstract}
	
	\begin{keywords}
		adaptive finite element discretization, parallel orbital-updating, clustered eigenvalue problem, quasi-orthogonality, error estimator, convergence
	\end{keywords}
	
	\begin{MSCcodes}
		65J05, 65N25, 65N30, 65N50
	\end{MSCcodes}
	
			\section{Introduction}
	Solving eigenvalue problems efficiently plays a crucial role in many fields of science and engineering. Due to the inherent challenges in the problems considered and the large-scale orthogonalization
	operations required, the computational cost is very high when highly accurate approximations of many eigenpairs are required. Taking electronic structure calculations of large systems \cite{cances2023density, oliveira2020cecam} as an example, we see that the computation involves global orthogonalization operations, which are expensive and  restrict their effective parallelization.
	
	To reduce both the computational cost and the parallelization restriction, a parallel orbital-updating (ParO) approach has been proposed in \cite{dai2014parallel} and developed in \cite{dai2025numerical, dai2021parallel, pan2017parallel} for solving eigenvalue problems or their corresponding energy minimization models resulting from electronic structure calculations. A number of numerical experiments have demonstrated the advantages of the ParO method over the existing methods, in both the computational cost and the parallel scalability. It should be pointed out that the ParO method has been incorporated into  Quantum ESPRESSO, an open-source software for the first-principles calculations \cite{quantum, oliveira2020cecam}.
	
	Adaptive finite element (FE) methods are naturally applied to solve eigenvalue problems of partial differential operators, which allows for better mesh adjustment to accommodate the characteristics of the exact solutions and the model's singularities, ultimately reducing the scale of the computational problem after discretization and enhancing the computational efficiency \cite{dai2015convergence, dai2008convergence, demlow2017convergence,  gallistl2015optimal,garau2011convergence, garau2009convergence, giani2009convergent}.
	The adaptive FE discretization based ParO (adaptive ParO for short) method incorporates the ParO method into adaptive FE discretizations, and can significantly reduce the computational cost and enhance the parallel scalability. The adaptive ParO method, whose details are shown in \Cref{algo:fram_matr} and \Cref{flow_fram}, transfers solving large scale eigenvalue problems directly to solving some independent source problems in adaptive FE spaces and some small scale algebraic eigenvalue problems. It allows the two-level parallelization: one level is solving these independent source problems in parallel intrinsically; the other level is solving each source problem by traditional algebraic parallel strategies or domain decomposition approaches. 
	
	Most recently, we have presented the numerical analysis for the ParO method on a given FE mesh and shown that the ParO method converges rapidly when the appropriate finite dimensional discretization is provided \cite{dai2025numerical}, which implies the necessity and advantages of incorporating the ParO method into adaptive FE discretizations. Indeed, the adaptive ParO method has been successfully applied to electronic structure calculations for several typical molecular systems in \cite{dai2014parallel},  including $\ce{H2O}$ (water), $\ce{C9H8O4}$ (aspirin), $\ce{C5H9O2N}$ ($\alpha$ amino acid), $\ce{C20H14N4}$ (porphyrin), and $\ce{C60}$ (fullerene) systems, which shows that the adaptive ParO method has the great potential for large scale parallelization and can systematically improve the accuracy of approximations. 
	
	In this paper, we will study the convergence of the adaptive ParO method for eigenvalue problems, taking the following eigenvalue problem as an example: find $\lambda\in\mathbb{R}$ and $u\in H_{0}^{1}(\Omega)$ with $\Vert u\Vert_{0, \Omega}=1$ such that
	\begin{align*}
		\mathcal{L}u:=-\nabla\cdot(A\nabla u)+cu=\lambda u,
	\end{align*}
	where $\Omega$, $A$ and $c$ are introduced in Section 2.1. Note that when many eigenpairs are required, both single and multiple eigenvalues are usually involved. As a result, the traditional measure for the eigenfunction approximation errors does not work well. In our analysis, we employ the eigenspace and the distance from one subspace to another (see, e.g., \cite{chen2014adaptive, dai2015convergence}), which requires sophisticated functional analysis. In particular, in addition to applying the existing adaptive FE analysis tools in the literature such as \cite{chen2011finite, dai2015convergence, dai2008convergence, demlow2017convergence, gallistl2015optimal}, we also need to develop some new approach for analyzing the adaptive ParO method based on the quasi-orthogonality introduced in \cite{dai2025numerical}.
	
	We mention that the philosophy behind the adaptive ParO method is to avoid the direct solution of large scale eigenvalue problems by solving source problems intrinsically in parallel in adaptive FE spaces to produce quasi-orthogonal approximations of orbitals/eigenfunctions, and solving in the lowest-dimensional subspace generated from the approximations. Our numerical analysis employs both computable and theoretical error estimators for both the ParO approximations and the FE approximations. By investigating the relationships among different error estimators, we prove that for the error estimators $\tilde{\eta}_{h}^{2}(\cdot,\Omega)$ and $\eta_{h}^{2}(\cdot, \Omega)$ of the ParO approximations and the FE approximations, respectively, there hold (see \Cref{prop:eqqqqqq} and \Cref{prop:error_est}):
	\begin{align*}
		\tilde{\eta}_{h}^{2}\left(\tilde{U}, \Omega\right)\cong\tilde{\eta}_{h}^{2}\left(\tilde{\mathcal{E}}U, \Omega\right)\approx\eta_{h}^{2}\left(\mathcal{E}_{h}U, \Omega\right)\cong\eta_{h}^{2}\left(U^{h}, \Omega\right)
	\end{align*}
	when the mesh size $h\ll1$ and the ParO approximations reach a certain accuracy. Here,  $\tilde{U}$, $U$ and $U^{h}$ are the vectors consisting of solutions produced by the ParO method, solutions of \cref{eq:weak_form_leq} and solutions of \cref{eq:fd_weak_form_leq}, and $\tilde{\mathcal{E}}U$ and $\mathcal{E}_{h}U$ are the spectral projections of $U$ onto the approximate eigenspace $\bigoplus_{i=1}^{q}\tilde{M}(\lambda_{i})$ produced by the ParO method and the approximate eigenspace $\bigoplus_{i=1}^{q}M_{h}(\lambda_{i})$ spanned by solutions of  \cref{eq:fd_weak_form_leq}, respectively (see Sections 3 and 4 for more details). Consequently, the error estimating for the adaptive ParO approximations can be transformed into the error estimating for adaptive FE approximations, and the contraction property of the adaptive ParO approximations is then obtained (see \Cref{thm:iter_con}). The quasi-orthogonality introduced in \cite{dai2025numerical} plays a crucial role in understanding the philosophy behind the ParO method and analyzing the approximations produced by the ParO method. We apply
	the quasi-orthogonality to derive the convergence of approximations produced by some practical adaptive ParO algorithms (taking \Cref{algo:pou_n_shifted} as an example): if the initial mesh is fine enough and some proper initial values are provided, after several ParO iterations, the error between the ParO approximations and the FE approximations can be controlled by the error estimator for the FE discretization in each refinement (see \Cref{prop:paro_conver_given}). More precisely, for $0<\tilde{\rho}\ll1$ and $n\geqslant0$, after several ParO iterations, there holds
	\begin{align*}
		\sum_{i=1}^{q}\left(\operatorname{dist}^{2}_{a}\left(M_{h_{n}}(\lambda_{i}), \tilde{M}_{n}(\lambda_{i})\right)+\sum_{j=1}^{d_{i}}\left|\lambda_{ij}^{h_{n}}-\lambda_{ij}^{(n)}\right|^{2}\right)\leqslant&\tilde{\rho}\eta_{h_{n}}^{2}\left(\mathcal{E}_{h_{n}}U, \Omega\right).
	\end{align*}
	Consequently, there exists a constant $\beta\in(0,1)$ that is independent of the mesh size $h_{n} (n\geqslant0)$ such that (see \Cref{thm:paro_conver})
	\begin{align*}
		\sum_{i=1}^{q}\left(\operatorname{dist}_{a}^{2}\left(M(\lambda_{i}), \tilde{M}_{n}(\lambda_{i})\right)+\sum_{j=1}^{d_{i}}\left|\lambda_{ij}^{(n)}-\lambda_{i}\right|\right)\lesssim \beta^{2n}, \quad \forall n\geqslant0.
	\end{align*}
	Our analysis shows that to ensure the convergence of the approximations, theoretically, we only need to match the errors of approximations produced by the ParO iterations to the approximation accuracy of FE discretizations. When the discretization is not accurate enough, we do not need to perform the ParO iterations too many times, which will significantly enhance the computational efficiency of the algorithms.
	
	The rest of this paper is organized as follows. We introduce the relevant notation and some existing results of a model problem and its FE discretization in Section 2. We present the framework of the adaptive ParO method and several practical algorithms in Section 3. In Section 4, we carry out the numerical analysis of the method, including the relationship among various types of error estimators, the contraction property of the adaptive ParO approximations, and the convergence of approximations produced by the practical adaptive ParO algorithm. We give concluding remarks in Section 5. Finally, we provide several detailed proofs in \Cref{appen}.
	
		\section{Preliminaries}In this section, we introduce some notation and recall several existing results for eigenvalue problems and their FE approximations that will be used in our discussions.
	\subsection{Model problem}
	Let $\Omega\subset\mathbb{R}^{d}(d\geqslant1)$ be a polygonal domain. We shall use the standard notation for Sobolev spaces with associated norms (see, e.g. \cite{adams2003sobolev, ciarlet1990handbook}). Let $H_{0}^{1}(\Omega)=\{v\in H^{1}(\Omega):v|_{\partial\Omega}=0\}$ and $(\cdot, \cdot)$ be the standard $L^{2}$ inner product. For convenience, the symbol $\lesssim$ will be used. The notation $B_{1}\lesssim B_{2}$ means that $B_{1}\leqslant CB_{2}$ for some generic constant $C$ that is independent of mesh parameters. The notation $\gtrsim$ is defined similarly, and the notation $B_{1}\cong B_{2}$ means $B_{1}\lesssim B_{2}$ and $B_{1}\gtrsim B_{2}$.
	
	We consider the following eigenvalue problem of a second-order elliptic partial differential operator: find $\lambda\in\mathbb{R}$ and $0\ne u\in H_{0}^{1}(\Omega)$ such that  
	\begin{align}\label{source_pro}  
		\mathcal{L}u:=-\nabla\cdot(A\nabla u)+cu=\lambda u,  
	\end{align}  
	where the coefficient matrix $A:\Omega\rightarrow\mathbb{R}^{d\times d}$ is symmetric positive definite with the smallest eigenvalue uniformly bounded away from $0$ and piecewise Lipschitz continuous, and $0\leqslant c\in L^{\infty}(\Omega)$. The corresponding weak form reads that: find $\lambda\in\mathbb{R}$ and $0\neq u\in H_{0}^{1}(\Omega)$ satisfying  
	\begin{align}\label{eq:weak_form_leq}  
		a(u,v)=\lambda b(u,v),\quad \forall v\in H_{0}^{1}(\Omega),  
	\end{align}  
	with bilinear forms defined by  
	\begin{align*}  
		a(u,v)=(A\nabla u, \nabla v)+(cu,v),\quad b(u,v)=(u,v), \quad\forall u,v\in H^1_0(\Omega).
	\end{align*}  
	
	We assume that  
	\begin{align}\label{ine:as1}  
		a(v,w)\lesssim\| v\|_{1, \Omega}\| w\|_{1, \Omega}, \quad\forall v, w\in H_{0}^{1}(\Omega)
	\end{align}  
	and  
	\begin{align}\label{ine:as2}  
		a(v,v)\gtrsim\|v\|_{1,\Omega}^{2}, \quad\forall v\in H_{0}^{1}(\Omega).  
	\end{align}  
	We mention that the derived results remain valid for a broader class of operators satisfying 
	\begin{align*}  
		\|v\|^2_{1, \Omega}-C\|v\|^{2}_{0, \Omega}\lesssim a(v,v),\quad\forall v\in H^1_0(\Omega),  
	\end{align*}  
	where $C>0$ is some constant (see Remark 2.9 in \cite{dai2008convergence} for details). An illustrative example is provided by the Schrödinger equation with $c$ being the Coulomb potential.  
	
	The eigenvalue problem \eqref{eq:weak_form_leq} admits a sequence of increasing eigenvalues $0<\lambda_{1}<\lambda_{2}<\cdots$ with finite multiplicities. Let $d_{i}$ represent the multiplicity of $\lambda_{i}$. The eigenvalues indexed as $i1,\ldots, id_{i}$ follow the ordering:  
	\begin{align*}  
		\lambda_{i-1}<\lambda_{i}=\lambda_{i1}=\cdots=\lambda_{id_{i}}<\lambda_{i+1}, \quad i=1,2,\ldots.  
	\end{align*}
	The corresponding eigenfunctions $\{u_{ij}\}_{j=1}^{d_{i}}$ satisfy $b(u_{ij}, u_{kl})$ $=\delta_{ik}\delta_{jl}$, where $\delta_{ik}$ and $\delta_{jl}$ denote the Kronecker delta. The eigenspace associated with $\lambda_{i}$ is denoted by $M(\lambda_{i})=\operatorname{span}\{u_{i1}, \ldots, u_{id_{i}}\}$. For $1\leqslant j\leqslant d_{i}$ and $1\leqslant s\leqslant d_{r}$, the ordering $(i,j)<(r,s)$ holds when $i<r$, or $i=r, j<s$.
	
	In this paper, we look for the smallest $N$ eigenvalues and their corresponding eigenfunctions of \cref{eq:weak_form_leq}. We require that there exists $q\in\mathbb{N}_{+}$ such that $ \sum_{i=1}^{q}d_{i}=N.$  
	Since both simple and multiple eigenvalues are involved, we introduce the distance with respect to $\|\cdot\|$ from a nontrivial subspace $X\subset H_{0}^{1}(\Omega)$ to another subspace $Y\subset H_{0}^{1}(\Omega)$ as follows (\cite{chatelin2011spectral, kato2013perturbation, knyazev2006new}),
	\begin{align*}
		\operatorname{dist}(X, Y)=\sup_{u\in X, \Vert u\Vert=1}\inf_{v\in Y}\Vert u-v\Vert.
	\end{align*}
	Consistently, for any $0\neq u,v\in H_{0}^{1}(\Omega)$, the distance $\operatorname{dist}(u,v)$  is defined by the distance from the one-dimensional subspace spanned by $u$ to the one spanned by $v$, that is
	\begin{align*}
		\operatorname{dist}(u,v)=\operatorname{dist}\left(\operatorname{span}\{u\}, \operatorname{span}\{v\}\right).
	\end{align*}
	$\operatorname{dist}(u,v)$ is actually the sine of the angle between $u$ and $v$, and is independent of the norms of vectors. Moreover, if $\operatorname{dim}(X)=\operatorname{dim}(Y)<\infty$, there holds
	\begin{align*}
		\operatorname{dist}(X, Y)=\operatorname{dist}(Y, X).
	\end{align*}    
	For convenience, we shall use the notation $\operatorname{dist}_{a}(\cdot, \cdot)$ and $\operatorname{dist}_{b}(\cdot, \cdot)$ when $\Vert\cdot\Vert$  is replaced by $\Vert\cdot\Vert_{a}:=\sqrt{a(\cdot, \cdot)}$ and $\Vert\cdot\Vert_{b}:=\sqrt{b(\cdot, \cdot)}$, respectively. 
	
	The following lemma will be used in our analysis, whose proof is provided in \Cref{proof:lem:dissyysybs}.
	\begin{lemma}\label{lem:dissyysybs}
		Let $X, Y\subset H_{0}^{1}(\Omega)$. If $\{x_{i}\}_{i=1}^{d}$ is an orthogonal basis with respect to $a(\cdot, \cdot)$, i.e., 
		\begin{align*}
			a(x_{i}, x_{j})=0,\quad i\neq j,
		\end{align*}
		then 
		\begin{align*}
			\operatorname{dist}_{a}^{2}(x, Y)\leqslant\sum_{i=1}^{d}\operatorname{dist}_{a}^{2}(x_{i}, Y),\quad x\in X.
		\end{align*}
	\end{lemma}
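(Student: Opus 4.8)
The plan is to recognize $\operatorname{dist}_a(\cdot,Y)$ as a \emph{normalized} distance-to-a-subspace inside the Hilbert space $\bigl(H_0^1(\Omega),a(\cdot,\cdot)\bigr)$, to decompose the given vector in the $a$-orthogonal basis, and then to combine linearity of the $a$-orthogonal projection onto $\overline{Y}$ with the triangle inequality and Cauchy--Schwarz. First I would record the setup: by \cref{ine:as1} and \cref{ine:as2}, $a(\cdot,\cdot)$ is an inner product on $H_0^1(\Omega)$ whose norm $\|\cdot\|_a$ is equivalent to $\|\cdot\|_{1,\Omega}$, so $\bigl(H_0^1(\Omega),a(\cdot,\cdot)\bigr)$ is a Hilbert space. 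Let $P$ be the $a$-orthogonal projection onto the closure $\overline{Y}$. Unwinding the definition of $\operatorname{dist}_a$ for the one-dimensional subspace $\operatorname{span}\{w\}$, and using that $Y$ (hence $\overline Y$) is a subspace so that $\inf_{v\in Y}$ may be replaced by $\inf_{v\in\overline Y}$, one gets, for any $0\neq w\in H_0^1(\Omega)$,
\begin{align*}
	\operatorname{dist}_a(w,Y)=\inf_{v\in Y}\left\|\frac{w}{\|w\|_a}-v\right\|_a=\frac{1}{\|w\|_a}\,\|(I-P)w\|_a .
\end{align*}

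The core of the argument is then a short computation. Fix $x\in X$ (the case $x=0$ being vacuous) and write $x=\sum_{i=1}^{d}c_ix_i$, discarding any $x_i=0$ so that $\|x_i\|_a\neq0$. The $a$-orthogonality of $\{x_i\}$ gives $\|x\|_a^2=\sum_{i=1}^{d}c_i^2\|x_i\|_a^2$. Since $I-P$ is linear, $(I-P)x=\sum_{i=1}^{d}c_i(I-P)x_i$, so the triangle inequality together with the identity above yields
\begin{align*}
	\|(I-P)x\|_a\leqslant\sum_{i=1}^{d}|c_i|\,\|(I-P)x_i\|_a=\sum_{i=1}^{d}\bigl(|c_i|\,\|x_i\|_a\bigr)\operatorname{dist}_a(x_i,Y).
\end{align*}
Applying Cauchy--Schwarz to this sum with the pairing chosen so that the first factor reassembles $\|x\|_a$, namely
\begin{align*}
	\|(I-P)x\|_a\leqslant\left(\sum_{i=1}^{d}c_i^2\|x_i\|_a^2\right)^{1/2}\left(\sum_{i=1}^{d}\operatorname{dist}_a^2(x_i,Y)\right)^{1/2}=\|x\|_a\left(\sum_{i=1}^{d}\operatorname{dist}_a^2(x_i,Y)\right)^{1/2},
\end{align*}
then dividing by $\|x\|_a$, squaring, and using $\operatorname{dist}_a(x,Y)=\|(I-P)x\|_a/\|x\|_a$, gives the claimed estimate.

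There is no genuine obstacle here; the only points that need care are (i) correctly identifying $\operatorname{dist}_a(x,Y)$ with the quotient $\|(I-P)x\|_a/\|x\|_a$ starting from the $\sup$--$\inf$ definition, which is where one-dimensionality of $\operatorname{span}\{x\}$ and the subspace structure of $Y$ enter, and (ii) selecting the Cauchy--Schwarz pairing so that $\sum_i c_i^2\|x_i\|_a^2$ appears and cancels the normalization. If one prefers to avoid the closure/projection language, the same computation goes through verbatim after replacing $(I-P)x_i$ by near-minimizers $v_i^{\varepsilon}\in Y$ of $\|x_i-v\|_a$, forming $v^{\varepsilon}=\sum_i c_iv_i^{\varepsilon}\in Y$, and letting $\varepsilon\to0$.
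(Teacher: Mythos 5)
Your proposal is correct and follows essentially the same route as the paper: identify $\operatorname{dist}_a(w,Y)$ with $\|(I-P)w\|_a/\|w\|_a$ via the $a$-orthogonal projection, expand $x$ in the $a$-orthogonal basis, apply the triangle inequality to $(I-P)x$, and close with Cauchy--Schwarz using the Pythagorean identity $\|x\|_a^2=\sum_i c_i^2\|x_i\|_a^2$. The only cosmetic difference is that the paper normalizes $\|x\|_a=\|x_i\|_a=1$ up front so that $\sum\alpha_i^2=1$, whereas you carry the coefficients and norms explicitly and divide at the end (and you are slightly more careful about closures/near-minimizers if $Y$ is not closed), but the mathematical content is identical.
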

	
	\subsection{Finite element discretization}
	Let $\{\mathscr{T}_{h}\}$ be a shape regular family of conforming FE meshes over $\Omega$, i.e., there exists a constant $\gamma^{*}$ such that 
	\begin{align*}
		\frac{h_{T}}{\rho_{T}}\leqslant\gamma^{*},\quad \forall T\in\bigcup_{h}\mathscr{T}_{h},
	\end{align*}
	where for each $T\in\mathscr{T}_{h}$, $h_{T}$ is the diameter of $T$, and $\rho_{T}$ is the diameter of the biggest ball contained in $T$, $h=\max\{h_{T}: T\in\mathscr{T}_{h}\}$. Denote the set of interior faces (edges or sides) of $\mathscr{T}_{h}$ by $\mathscr{E}_{h}$. Let 
	\begin{align*}
		S^{h}(\Omega)=\{v\in C(\bar{\Omega}):v|_{T}\in P_{T}^{k},\forall T\in\mathscr{T}_{h}\},
	\end{align*}
	where $P_{T}^{k}$ is the space of polynomials whose degree is not greater than a positive integer $k$. Set $V^{h}:=S^{h}(\Omega)\cap H_{0}^{1}(\Omega)$ with $\operatorname{dim}(V^{h})=N_{g}>N$. 
	
	The standard FE discretization of \cref{eq:weak_form_leq} reads as follows: find $\lambda^{h}\in\mathbb{R}$ and $0\neq u^{h}\in V^{h}$ such that
	\begin{align}\label{eq:fd_weak_form_leq}
		a(u^{h},v)=\lambda^{h} b(u^{h},v),\quad \forall v\in V^{h}.
	\end{align}
	We see that $N_{g}> N$ and may order the eigenvalues of \cref{eq:fd_weak_form_leq} as follows:
	\begin{align*}
		0<\lambda^{h}_{11}\leqslant\cdots\leqslant\lambda^{h}_{1d_{1}}\leqslant\cdots\leqslant\lambda^{h}_{q1}\leqslant\cdots\lambda^{h}_{qd_{q}}\leqslant\cdots.
	\end{align*}
	Suppose the corresponding eigenfunctions $\{u^{h}_{ij}\}_{(1,1)\leqslant(i,j)\leqslant(q,d_{q})}$ satisfy that $b(u^{h}_{ij}, u^{h}_{kl})=\delta_{ik}\delta_{jl}$. For $i=1,\ldots,q$, set $M_{h}(\lambda_{i})=\operatorname{span}\{u^{h}_{i1}, \ldots, u^{h}_{id_{i}}\}$. 
	
	Define the operator $K_{h}:L^{2}(\Omega)\rightarrow V^{h}$ by
	\begin{align*}
		a(K_{h}w, v)=b(w, v), \quad\forall w\in L^{2}(\Omega), \quad\forall v\in V^{h}.
	\end{align*}
	We see that eigenvalues of $K_{h}$ are $\{(\lambda_{ij}^{h})^{-1}\}$. 
	Let $\Gamma_{i}$ be a circle in the complex plane centred at $\lambda_{i}^{-1}$ and not enclosing any other $\lambda_{j}^{-1} (j\neq i)$. For sufficiently small $h$, there is no other eigenvalues of $K_{h}$ contained in $\Gamma_{i}$ except $(\lambda_{i1}^{h})^{-1}, \ldots, (\lambda_{id_{i}}^{h})^{-1}$. The spectral projection associated with $K_{h}$ and $\lambda^{h}_{i1}, \ldots, \lambda^{h}_{id_{i}}$ is defined as 
	\begin{align}\label{def:spec_proj}
		E_{h}^{(i)}:=E_{h}^{(i)}(\lambda_{i})=\frac{1}{2\pi\mathrm{i}}\int_{\Gamma_{i}}(z-K_{h})^{-1}dz,\quad i=1,\ldots,q.
	\end{align}
	It has been proved that $E_{h}^{(i)}(\lambda_{i}):M(\lambda_{i})\rightarrow M_{h}(\lambda_{i})$ is one-to-one and onto when $h$ is sufficiently small (see, e.g., \cite{babuvska1991eigenvalue, babuvska1989finite}). 
	
	The following result is classical and can be found in \cite{babuvska1989finite, chatelin2011spectral, knyazev1985sharp}. 
	\begin{proposition}\label{thm:cluster_eigenfunc}
		If $h\ll1$, then
		\begin{align*}
			\left\Vert u_{ij}-E_{h}^{(i)}u_{ij}\right\Vert_{a}\lesssim\operatorname{dist}_{a}(M(\lambda_{i}), V^{h}),\quad \forall (1,1)\leqslant(i,j)\leqslant(q,d_{q}).
		\end{align*}
	\end{proposition}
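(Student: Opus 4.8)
The plan is to reduce the spectral–projection error to the best $a(\cdot,\cdot)$-approximation error of $u_{ij}$ by $V^h$, through a resolvent–identity argument in the spirit of Babu\v{s}ka--Osborn spectral approximation theory. First I would record two elementary facts. Let $P_h\colon H_0^1(\Omega)\to V^h$ denote the Ritz ($a$-orthogonal) projection, i.e.\ $a(P_hv-v,w)=0$ for all $w\in V^h$. Comparing the defining relations of the solution operator $K$, given by $a(Kw,v)=b(w,v)$ for all $v\in H_0^1(\Omega)$, with those of $K_h$, one obtains $K_h=P_hK$ on $L^2(\Omega)$; moreover the exact eigenfunctions satisfy $Ku_{ij}=\lambda_i^{-1}u_{ij}$. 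Introduce the continuous spectral projection $E^{(i)}=\frac{1}{2\pi\mathrm i}\int_{\Gamma_i}(z-K)^{-1}\,dz$ associated with $K$ and $\lambda_i^{-1}$; then $E^{(i)}u_{ij}=u_{ij}$, so that $u_{ij}-E_h^{(i)}u_{ij}=(E^{(i)}-E_h^{(i)})u_{ij}$.

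Next I would insert the resolvent identity $(z-K)^{-1}-(z-K_h)^{-1}=(z-K_h)^{-1}(K_h-K)(z-K)^{-1}$ under the integral sign. Acting on $u_{ij}$, the rightmost factor collapses because $u_{ij}$ is an eigenfunction of $K$: $(z-K)^{-1}u_{ij}=(z-\lambda_i^{-1})^{-1}u_{ij}$, while $(K_h-K)u_{ij}=(P_h-I)Ku_{ij}=-\lambda_i^{-1}(I-P_h)u_{ij}$. This gives
\begin{align*}
u_{ij}-E_h^{(i)}u_{ij}=-\frac{\lambda_i^{-1}}{2\pi\mathrm i}\int_{\Gamma_i}\frac{(z-K_h)^{-1}(I-P_h)u_{ij}}{z-\lambda_i^{-1}}\,dz ,
\end{align*}
whence $\|u_{ij}-E_h^{(i)}u_{ij}\|_a\lesssim \bigl(\sup_{z\in\Gamma_i}\|(z-K_h)^{-1}\|_{a\to a}\bigr)\,\|(I-P_h)u_{ij}\|_a$, the hidden constant depending only on the length of $\Gamma_i$ and its distance to $\lambda_i^{-1}$, both fixed since $q$, and hence the cluster structure, is fixed.

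It then remains to control the two factors on the right. The term $\|(I-P_h)u_{ij}\|_a$ is immediate: since $P_h$ is the $a$-orthogonal projection onto $V^h$, we have $\|(I-P_h)u_{ij}\|_a=\inf_{v\in V^h}\|u_{ij}-v\|_a\leqslant \|u_{ij}\|_a\,\operatorname{dist}_a(M(\lambda_i),V^h)=\sqrt{\lambda_i}\,\operatorname{dist}_a(M(\lambda_i),V^h)$, using $u_{ij}\in M(\lambda_i)$ and $a(u_{ij},u_{ij})=\lambda_i$. The genuinely nontrivial point — the step I expect to be the main obstacle — is the uniform bound $\sup_{z\in\Gamma_i}\|(z-K_h)^{-1}\|_{a\to a}\leqslant C$ valid for all $h\ll 1$. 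I would deduce it from the norm convergence $K_h\to K$ in $\mathcal L(H_0^1(\Omega))$, which itself follows from the compactness of $K\colon H_0^1(\Omega)\to H_0^1(\Omega)$ together with the strong convergence $P_h\to I$ (a standard fact: post-composing a compact operator with a pointwise-convergent family of uniformly bounded projections yields operator-norm convergence). Norm convergence forces the nonzero spectrum of $K_h$ to stay uniformly away from the fixed contour $\Gamma_i$ once $h$ is small, which is precisely where the hypothesis $h\ll 1$ enters and which gives the uniform resolvent bound. Combining the three estimates yields the claim. Alternatively, one may simply invoke the classical spectral approximation estimates of Babu\v{s}ka--Osborn and Chatelin cited above, in which case the argument collapses to bookkeeping and the verification that $\operatorname{dist}_a(M(\lambda_i),V^h)$ dominates the individual best-approximation errors $\inf_{v\in V^h}\|u_{ij}-v\|_a$.
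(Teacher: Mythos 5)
Your argument is correct and is the classical resolvent--contour-integral proof from Babu\v{s}ka--Osborn/Chatelin spectral approximation theory --- which is exactly what the paper appeals to: it cites this proposition as classical (from \cite{babuvska1989finite, chatelin2011spectral, knyazev1985sharp}) and gives no proof of its own. One small slip: the second resolvent identity should read $(z-K)^{-1}-(z-K_h)^{-1}=(z-K_h)^{-1}(K-K_h)(z-K)^{-1}$, with $K-K_h$ rather than $K_h-K$, so your final integral representation should carry a $+$ sign; this is immaterial once norms are taken.
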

	
	The following lemma can be found in \cite{d2018optimization, knyazev1985sharp}.
	
	\begin{lemma}\label{thm:cluster_eigen}
		For the FE discretization \cref{eq:fd_weak_form_leq},  there holds that
		\begin{align*}
			0\leqslant\lambda^{h}_{ij}-\lambda_{i}\leqslant\lambda^{h}_{ij}\operatorname{dist}_{a}^{2}(\bigoplus_{i=1}^{q}M(\lambda_{i}), V^{h}),\quad \forall (1,1)\leqslant(i,j)\leqslant(q,d_{q}).
		\end{align*}
	\end{lemma}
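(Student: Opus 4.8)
The plan is to prove the two bounds separately. Write $m:=d_1+\cdots+d_{i-1}+j$ for the global index of the pair $(i,j)$, so that $\lambda_{ij}^h$ and $\lambda_i$ are the $m$-th eigenvalues of \cref{eq:fd_weak_form_leq} and \cref{eq:weak_form_leq}, respectively. The lower bound $\lambda_{ij}^h\geqslant\lambda_i$ is the classical monotonicity of conforming Ritz values: by the Courant--Fischer principle,
\[
\lambda_{ij}^h=\min_{\substack{W\subset V^h\\ \dim W=m}}\max_{0\ne w\in W}\frac{a(w,w)}{b(w,w)}\ \geqslant\ \min_{\substack{W\subset H_0^1(\Omega)\\ \dim W=m}}\max_{0\ne w\in W}\frac{a(w,w)}{b(w,w)}=\lambda_i,
\]
because every $m$-dimensional subspace of $V^h$ is also admissible over $H_0^1(\Omega)$.

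For the upper bound, let $\widehat M:=\operatorname{span}\{u_{11},\ldots,u_{ij}\}$ be spanned by the first $m$ eigenfunctions of \cref{eq:weak_form_leq}; then $\dim\widehat M=m$, $\widehat M\subseteq\bigoplus_{k=1}^qM(\lambda_k)$, and $\delta:=\operatorname{dist}_a(\widehat M,V^h)\leqslant\operatorname{dist}_a(\bigoplus_{k=1}^qM(\lambda_k),V^h)$. One may assume $\delta<1$, for when $\delta=1$ the claimed right-hand side is at least $\lambda_{ij}^h>\lambda_{ij}^h-\lambda_i$. Let $Q_h\colon H_0^1(\Omega)\to V^h$ be the $a(\cdot,\cdot)$-orthogonal projection; since $\delta<1$, $Q_h$ is injective on $\widehat M$, so $W_h:=Q_h\widehat M$ has dimension $m$. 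Applying Courant--Fischer to the reciprocal eigenvalues (for which $1/\lambda_{ij}^h$ is the $m$-th largest on $V^h$) gives $1/\lambda_{ij}^h=\max_{\dim W=m,\,W\subset V^h}\min_{0\ne w\in W}b(w,w)/a(w,w)$, so taking $W=W_h$ reduces the problem to bounding $b(w,w)/a(w,w)$ from below over $0\ne w\in W_h$.

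To do this, fix $0\ne w\in W_h$ and decompose $w=w_0+w_1$, with $w_0$ the $a$-orthogonal projection of $w$ onto $\widehat M$ and $w_1\perp_a\widehat M$; thus $a(w,w)=a(w_0,w_0)+a(w_1,w_1)$. Two facts combine. First, $w_1$ is $b$-orthogonal to $\widehat M$ as well, since $b(u_{kl},w_1)=\lambda_{kl}^{-1}a(u_{kl},w_1)=0$ for all $(k,l)\leqslant(i,j)$; together with $a(w_0,w_0)\leqslant\lambda_i\,b(w_0,w_0)$ (because $w_0$ is a combination of eigenfunctions with eigenvalues $\leqslant\lambda_i$) this gives $b(w,w)\geqslant b(w_0,w_0)\geqslant\lambda_i^{-1}a(w_0,w_0)=\lambda_i^{-1}(a(w,w)-a(w_1,w_1))$. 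Second, $a(w_1,w_1)\leqslant\delta^2\,a(w,w)$: writing $\|\cdot\|_a:=a(\cdot,\cdot)^{1/2}$ we have $\|w_1\|_a=\operatorname{dist}_a(\operatorname{span}\{w\},\widehat M)\,\|w\|_a$, and $\operatorname{dist}_a(\operatorname{span}\{w\},\widehat M)\leqslant\operatorname{dist}_a(W_h,\widehat M)=\operatorname{dist}_a(\widehat M,W_h)\leqslant\operatorname{dist}_a(\widehat M,V^h)=\delta$, using in turn $\operatorname{span}\{w\}\subset W_h$, the symmetry of $\operatorname{dist}$ for subspaces of equal dimension, and $W_h\subset V^h$. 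Hence $b(w,w)/a(w,w)\geqslant(1-\delta^2)/\lambda_i$ for every $w\in W_h$, so $1/\lambda_{ij}^h\geqslant(1-\delta^2)/\lambda_i$, that is $\lambda_{ij}^h-\lambda_i\leqslant\lambda_{ij}^h\,\delta^2\leqslant\lambda_{ij}^h\operatorname{dist}_a^2(\bigoplus_{k=1}^qM(\lambda_k),V^h)$, as claimed.

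I expect the crux to be getting the sharp constant --- coefficient exactly $\lambda_{ij}^h$ in front of $\operatorname{dist}_a^2$, and not a weaker quantity such as $\lambda_{ij}^h\,\delta^2/(1-\delta^2)$. A naive bound for $\|w_1\|_a$ that passes through the preimage $u\in\widehat M$ of $w$ (estimating $\|w_1\|_a\leqslant\|u-Q_hu\|_a\leqslant\delta\|u\|_a$ and then $\|Q_hu\|_a\geqslant\sqrt{1-\delta^2}\,\|u\|_a$) loses a factor; the remedy is the second fact above, namely that $W_h=Q_h\widehat M$ has the same dimension $m$ as $\widehat M$, so the one-sided distance from $W_h$ to $\widehat M$ equals the one from $\widehat M$ to $W_h$ and is therefore dominated by $\operatorname{dist}_a(\widehat M,V^h)$ since $W_h\subset V^h$. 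This is exactly where the equidimensional symmetry of $\operatorname{dist}$ recorded in the preliminaries is used; everything else is Courant--Fischer bookkeeping together with Pythagoras in the $a$- and $b$-inner products.
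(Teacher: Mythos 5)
The paper does not prove this lemma itself; it is cited from the literature (\cite{d2018optimization, knyazev1985sharp}), so there is no internal proof to compare against. Your argument — Courant--Fischer for the lower bound, then projecting $\widehat M=\operatorname{span}\{u_{11},\ldots,u_{ij}\}$ into $V^h$ to build a test subspace $W_h$ for the min-max characterization of $1/\lambda_{ij}^h$ — is the standard route and is correct in structure, and you are right that the equidimensional symmetry of $\operatorname{dist}_a$ is exactly what avoids the lossy $\delta^2/(1-\delta^2)$ bound.

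There is one slip in the justification of the chain proving $a(w_1,w_1)\leqslant\delta^2\,a(w,w)$. You claim $\operatorname{dist}_a(\widehat M,W_h)\leqslant\operatorname{dist}_a(\widehat M,V^h)$ ``since $W_h\subset V^h$,'' and you repeat this reasoning in the last paragraph. Inclusion of the \emph{target} subspace goes the other way: if $W_h\subset V^h$, then for fixed $u\in\widehat M$ the infimum over $V^h$ is taken over a larger set, so $\inf_{v\in V^h}\|u-v\|_a\leqslant\inf_{v\in W_h}\|u-v\|_a$, whence $\operatorname{dist}_a(\widehat M,V^h)\leqslant\operatorname{dist}_a(\widehat M,W_h)$ — the opposite of what you wrote. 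The inequality you actually need is nevertheless true, but for a different reason: because $W_h=Q_h\widehat M$ with $Q_h$ the $a$-orthogonal projection onto $V^h$, the best $a$-approximant $Q_hu\in V^h$ of every $u\in\widehat M$ already lies in $W_h$, so $\inf_{v\in W_h}\|u-v\|_a\leqslant\|u-Q_hu\|_a=\inf_{v\in V^h}\|u-v\|_a$, and the desired domination follows by taking the supremum over unit $u\in\widehat M$. The specific construction of $W_h$ as the image of $\widehat M$ under the projection is what matters here, not the bare inclusion $W_h\subset V^h$; replacing the stated reason by this one closes the gap and the rest of the proof stands.
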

	
	Since several eigenspaces are involved, the following notation will be used in our analysis. Define
	\begin{align*}
		\mathcal{M}=\begin{pmatrix}
			M(\lambda_{1}), \cdots, M(\lambda_{q})
		\end{pmatrix}\subset \left(H_{0}^{1}(\Omega)\right)^{N},\quad
		\mathcal{M}_{h}=\begin{pmatrix}
			M_{h}(\lambda_{1}), \cdots, M_{h}(\lambda_{q})
		\end{pmatrix}\subset \left(V^{h}\right)^{N},
	\end{align*}
	and the operator
	\begin{align}\label{eq:oplus_spec_fd}
		\mathcal{E}_{h}:=E_{h}^{(1)}\oplus\cdots\oplus E_{h}^{(q)}: \mathcal{M}\rightarrow\mathcal{M}_{h}
	\end{align}
	by 
	\begin{align*}
		\mathcal{E}_{h}U=\left(E_{h}^{(1)}u_{11}, \ldots, E_{h}^{(1)}u_{1d_{1}}, \ldots, E_{h}^{(q)}u_{q1}, \ldots, E_{h}^{(q)}u_{qd_{q}}\right),
	\end{align*}
	where $U=\left(u_{11}, \ldots, u_{1d_{1}}, \ldots, u_{q1}, \ldots,  u_{qd_{q}}\right)$.
	
	For $W=(w_{1}, \ldots, w_{N})\in \left(H_{0}^{1}(\Omega)\right)^{N}$, we denote
	\begin{align*}
		\Vert W\Vert=\left(\sum_{i=1}^{N}\Vert w_{i}\Vert^{2}\right)^{1/2}.
	\end{align*}
	
		\section{The adaptive ParO method} In this section, we introduce the framework of the adaptive ParO method and present several practical algorithms.
	\subsection{Algorithm framework}
	A framework of the adaptive ParO method for the Kohn-Sham equation, which is an eigenvalue problem of a nonlinear operator, has been proposed in \cite{dai2014parallel}. In this paper, we focus on the eigenvalue problem of linear operators. By adapting Algorithm 3.1 in  \cite{dai2014parallel} to the case of linear operators and further incorporating the description of Algorithm 4.1 in \cite{dai2025numerical}, we can derive the following framework (\Cref{algo:fram_matr}) of the adaptive ParO method for computing the smallest $N$ eigenvalues and their corresponding eigenfunctions of \cref{eq:weak_form_leq}. To better illustrate the incorporation process between the ParO method and adaptive FE discretizations, we provide a flowchart as \Cref{flow_fram}.
	\begin{algorithm}[htbp]
		\caption{Framework for the adaptive ParO method}\label{algo:fram_matr}
		\begin{algorithmic}[1]
			\State Given $\mathscr{T}_{h_{0}}$,  $V^{h_{0}}$ and initial data $\left\{\left(\lambda_{k}^{(-1)}, u_{k}^{(-1)}\right)\right\}_{1\leqslant k\leqslant N}$, let $n=0$;
			\While{not converged with respect to $n$}
			\State Set $\left(\lambda_{k}^{(n, 0)}, u_{k}^{(n, 0)}\right)=\left(\lambda_{k}^{(n-1)}, u_{k}^{(n-1)}\right)$ and $m=0$;
			\While{not converged with respect to $m$}
			\State For $k=1,\ldots, N$, update each orbital $u_{k}^{(n,m)}$ to obtain $u_{k}^{(n,m+1/2)}\in V^{h_{n}}$ in parallel;
			
			\State Let $u_{k}^{(n,m+1)}=u_{k}^{(n,m+1/2)}/\left\Vert u_{k}^{(n,m+1/2)}\right\Vert_{b}$ and $\lambda_{k}^{(n,m+1)}=\left\Vert u_{k}^{(n,m+1)}\right\Vert_{a}^{2}$. Or if necessary, solve \cref{eq:weak_form_leq} in $\tilde{X}_{n,m+1}:=\operatorname{span}\left\{u_{1}^{(n,m+1/2)}, \ldots, u_{N}^{(n,m+1/2)}\right\}$ and obtain eigenpairs $\left(\lambda_{k}^{(n,m+1)}, u_{k}^{(n,m+1)}\right)$; 
			
			\State Let $m = m + 1$;
			\EndWhile
			
			\State Set $\left(\lambda_{k}^{(n)}, u_{k}^{(n)}\right) = \left(\lambda_{k}^{(n,m)}, u_{k}^{(n,m)}\right)$ for $k=1, \ldots, N$;
			
			\State Construct $\mathscr{T}_{h_{n+1}}$ and $V^{h_{n+1}}$ based on an adaptive procedure to
			$\left\{\left(\lambda_{k}^{(n)}, u_{k}^{(n)}\right)\right\}_{1\leqslant k\leqslant N}$;
			
			\State Let $n = n + 1$.
			\EndWhile
		\end{algorithmic}
	\end{algorithm}
	
	\begin{figure}[htbp!]
		\centering
		\includegraphics[width=320pt]{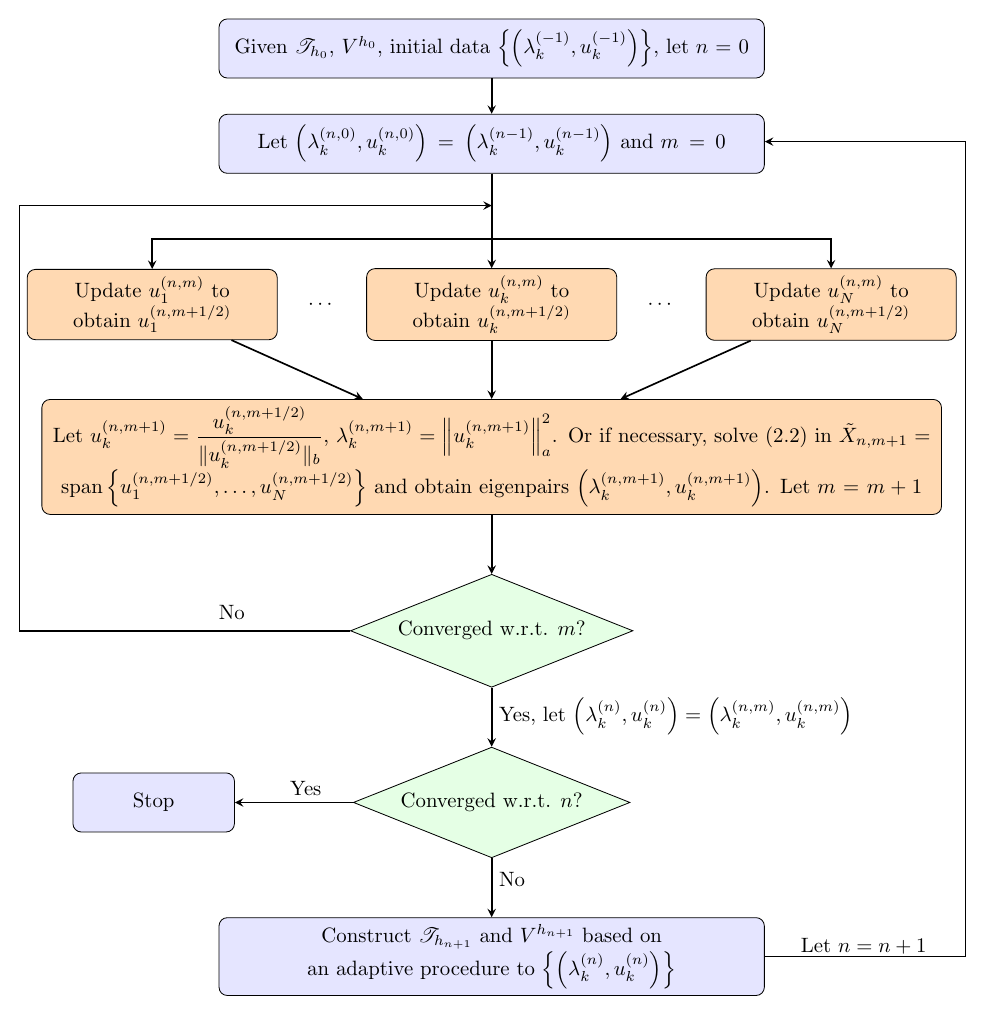}
		\caption{Flowchart of \Cref{algo:fram_matr}}\label{flow_fram}
	\end{figure}
	
	The adaptive ParO method is an orbital/eigenfunction iteration based approach for solving  clustered eigenvalue problems with both single and multiple eigenvalues involved. If proper initial guesses or approximations for the eigenfunctions are provided, then the method will produce approximations to orthonormal bases of the corresponding eigenspaces, more precisely, the corresponding orthonormal eigenfunctions. Details of Step 1 and Step 5 are referred to \cite{dai2014parallel, dai2025numerical}.
	
	As mentioned in \cite{dai2025numerical}, in Step 6 of the $n$-th refinement, we can update the ParO approximations either by simply normalizing the norms or by solving the following small scale eigenvalue problem: find $\left\{\left(\lambda_{k}^{(n, m+1)}, u_{k}^{(n, m+1)}\right)\right\}\subset\mathbb{R}\times \tilde{X}_{n,m+1}$ satisfying $b\left(u_{k}^{(n,m+1)}, u_{l}^{(n,m+1)}\right)=\delta_{kl}$ for $k,l=1,\ldots,N$ and
	\begin{align}\label{eq:pro_ei_pro}
		a\left(u_{k}^{(n,m+1)},v\right)=\lambda_{k}^{(n,m+1)}b\left(u_{k}^{(n,m+1)},v\right)\quad \forall v\in \tilde{X}_{n,m+1}.
	\end{align}

	In the following, we recall how to implement the adaptive FE discretization for eigenvalue problems when the smallest $N$ eigenvalues and the corresponding eigenfunctions are required. An adaptive FE approach usually consists of the following four steps \cite{cascon2008quasi, chen2011finite, dai2015convergence, dai2008convergence}:
	$$
	\mbox{\bf Solve}~\rightarrow~\mbox{\bf Estimate}~\rightarrow~\mbox{\bf Mark}~\rightarrow~\mbox{\bf Refine}.
	$$
	
	\textbf{Solve} step gets the FE approximations in $V^{h_{n}}$ with respect to a given mesh $\mathscr{T}_{h_{n}}$. In \Cref{algo:fram_matr}, the \textbf{Solve} step consists of Steps 4-9.

	\textbf{Estimate} step requires the a posteriori error estimators based on the mesh $\mathscr{T}_{h_{n}}$ and the corresponding output from \textbf{Solve} step. For the sake of clarity, we introduce the following definition of error estimators. We see that they coincide with those proposed in \cite{dai2015convergence}.

 For $\mathscr{T}_{h_{n}}$ and $w\in \operatorname{span}\{u_{1}^{(n)}, \ldots, u_{N}^{(n)}\}$, define the element residual $\tilde{\mathscr{R}}_{T}(\cdot)$ in $T\in\mathscr{T}_{h_{n}}$ and the jump residual $\tilde{\mathscr{J}}_{e}(\cdot)$ on $e \in \mathscr{E}_{h_{n}}$ by 
\begin{align*}
    \tilde{\mathscr{R}}_{T}(w)=&\sum_{k=1}^{N}b\left(w, u_{k}^{(n)}\right)\lambda_{k}^{(n)}u_{k}^{(n)}+\nabla\cdot\left(A\nabla w\right)-cw,\\
    \tilde{\mathscr{J}}_{e}(w)=&-A\nabla w^{+}\cdot\nu^{+}-A\nabla w^{-}\cdot\nu^{-}=\llbracket A \nabla w \rrbracket_e \cdot \nu_e,
\end{align*}
where $e$ is the common side of elements $T^{+}$ and $T^{-}$ with unit outwards normals $\nu^{+}$ and $\nu^{-}$, respectively, and $\nu_{e}=\nu^{-}$. For $T\in\mathscr{T}_{h_{n}}$, define the local error indicator $\tilde{\eta}_{h_{n}}(w, T)$ and $\tilde{\eta}_{h_{n}}(W, T)$ by 
\begin{align*}
    \tilde{\eta}_{h_{n}}^{2}(w, T)&=h_{T}^{2}\left\Vert\tilde{\mathscr{R}}_{T}(w)\right\Vert_{0,T}^{2}+\sum_{e\in\mathscr{E}_{h_{n}}, e\subset\partial T}h_{e}\left\Vert\tilde{\mathscr{J}}_{e}(w)\right\Vert_{0,e}^{2}, \\
    \tilde{\eta}_{h_{n}}^{2}(W,T)&=\sum_{k=1}^{N}\tilde{\eta}_{h_{n}}^{2}(w_{k}, T),
\end{align*}
where $W=(w_{1}, \ldots, w_{N})$. Then the global error estimators $\tilde{\eta}_{h_{n}}(w, \Omega)$ and $\tilde{\eta}_{h_{n}}(W, \Omega)$ are defined by
\begin{align*}
    \tilde{\eta}_{h_{n}}^{2}(w, \Omega)=\sum_{T\in\mathscr{T}_{h_{n}},  T\subset\Omega} \tilde{\eta}_{h_{n}}^{2}(w, T),\quad \tilde{\eta}_{h_{n}}^{2}(W, \Omega)=\sum_{T\in\mathscr{T}_{h_{n}}, T\subset\Omega} \tilde{\eta}_{h_{n}}^{2}(W, T).
\end{align*}

In our calculations, the error estimators $\{\tilde{\eta}_{h_{n}}(U^{(n)}, T)\}_{T\in\mathscr{T}_{h_{n}}}$, which are computable, are used to estimate the error of the ParO approximations in the adaptive refinement of the FE discretizations, where $U^{(n)}=\left(u_{1}^{(n)}, \ldots, u_{N}^{(n)}\right)$. 
	
	\textbf{Mark} step provides a strategy to pick out a subset $\hat{\mathscr{T}}_{h_{n}}$ of $\mathscr{T}_{h_{n}}$ to be refined based on the  posteriori error indicators. The Dörfler marking strategy \cite{dai2015convergence, dorfler1996convergent}, which is commonly used in the adaptive algorithms, is adopted in our algorithms and is  stated as follows.
	
	\vspace{0.5\baselineskip}
	\begin{mdframed}
		\begin{center}
			\textbf{D\"orfler marking strategy}
		\end{center}
		Give a parameter $0 < \theta < 1$.
		
		\begin{enumerate}
			\item Construct a subset $\hat{\mathscr{T}}_{h_{n}}$ of $\mathscr{T}_{h_{n}}$ such that
			\begin{align}\label{ine:dor}
				\sum_{T \in \hat{\mathscr{T}}_{h_{n}}}\tilde{\eta}_{h_{n}}^{2}(U^{(n)},T) \geqslant \theta \tilde{\eta}_{h_{n}}^{2}(U^{(n)},\Omega).
			\end{align}
			\item Mark all the elements in $\hat{\mathscr{T}}_{h_{n}}$.
		\end{enumerate}
	\end{mdframed}
	\vspace{0.5\baselineskip}
	
	\textbf{Refine} step is some iterative or recursive bisection of elements with the minimal refinement condition that marked elements are bisected at least once \cite{cascon2008quasi,maubach1995local}. Given a fixed number $\ell\geqslant1$, for any $\mathscr{T}\in\{\mathscr{T}_{h_{n}}\}_{n\geqslant1}$, and a subset $\hat{\mathscr{T}}\subset\mathscr{T}$ of marked elements, $\mathscr{T}_{*}=\operatorname{\textbf{Refine}}(\mathscr{T}, \hat{\mathscr{T}})$ outputs a conforming mesh $\mathscr{T}_{*}\in \{\mathscr{T}_{h_{n}}\}_{n\geqslant1}$, where at least all elements of $\hat{\mathscr{T}}$ are bisected $\ell$ times. 
	
	The adaptive mesh refining procedure with the Dörfler marking strategy is summarized in \Cref{algo:adaptive_mode}. 
	\begin{algorithm}[htbp]
		\caption{$\operatorname{Adaptive\_Refine}(\left\{\left(\lambda_{k}^{(n)}, u_{k}^{(n)}\right)\right\}, \mathscr{T}_{h_{n}}, \theta)$}\label{algo:adaptive_mode}
		\begin{algorithmic}[1]
			\State Compute local error indicators $\tilde{\eta}_{h_{n}}(u_{k}^{(n)}, T)$ from $\left\{\left(\lambda_{k}^{(n)}, u_{k}^{(n)}\right)\right\}$ for $k=1, \ldots, N$ and $T\in \mathscr{T}_{h_{n}}$;
			\State Construct $\hat{\mathscr{T}}_{h_{n}}\subset\mathscr{T}_{h_{n}}$ by the \textbf{D\"orfler marking strategy} with the parameter $\theta$;
			\State Refine $\mathscr{T}_{h_{n}}$ to get a new conforming mesh $\mathscr{T}_{h_{n+1}}$ by Procedure \textbf{Refine}.
		\end{algorithmic}
	\end{algorithm}
	
	\subsection{Practical algorithms}

	Before presenting the practical algorithms of the adaptive ParO method, we must preprocess the initial data. As noted in the Introduction, we are interested in clustered eigenvalues and their associated eigenfunctions, so we should approximate each eigenspace. We understand that it is not trivial to obtain the multiplicity $d_{i}$ of each eigenvalue $\lambda_{i}$ in advance. One feasible strategy is to cluster the initial guesses $\lambda_{1}^{(0)}\leqslant\lambda_{2}^{(0)}\leqslant\cdots\leqslant\lambda_{N}^{(0)}$ of eigenvalues. Using clustering criteria such as the Bayesian Information Criterion or the silhouette method (see e.g., \cite{rousseeuw1987silhouettes, schwarz1978estimating}), we can obtain $q'$ clusters with $d_{i}'$ eigenpairs in the $i$-th cluster $(i=1,\ldots,q')$, that is,  
\begin{align}\label{eq:clustering_eigenpairs}
    \left\{\left(\lambda_{ij}^{(-1)}, u_{ij}^{(-1)}\right)\right\}_{i=1,\ldots,q',j=1,\ldots,d_{i}'}= \left\{\left(\lambda_{k}^{(-1)}, u_{k}^{(-1)}\right)\right\}_{k=1,\ldots,N}.
\end{align}

		As mentioned in \cite{dai2014parallel}, there are several approaches to update the orbitals in Step 5 of \Cref{algo:fram_matr}, such as the shifted-inverse approach and Chebyshev filtering. We also observe in \cite{dai2025numerical} that one may choose whether to solve the projected eigenvalue problem \cref{eq:pro_ei_pro} in Step 6 or not in the process of ParO iterations. Anyway, there are various available approaches to construct practical algorithms under the framework of the adaptive ParO method (\Cref{algo:fram_matr}). Then, in the $m$-th iteration of the $n$-th refinement,
\begin{align*}
    \left\{\left(\lambda_{ij}^{(n, m)}, u_{ij}^{(n, m)}\right)\right\}_{i=1,\ldots,q',j=1,\ldots,d_{i}'}= \left\{\left(\lambda_{k}^{(n,m)}, u_{k}^{(n,m)}\right)\right\}_{k=1,\ldots,N}
\end{align*}
 are produced as the ParO approximations for the eigenpairs $\left\{\left(\lambda_{i}, u_{ij}\right)\right\}_{i=1,\ldots,q,j=1,\ldots,d_{i}}$.
	
	As a practical example, the shifted-inverse based ParO algorithm applies the shifted-inverse approach to update each orbital and solves a small scale eigenvalue problem in each iteration to obtain the approximations \cite{dai2014parallel, dai2025numerical, pan2017parallel}. The convergence of the algorithm on a given FE mesh is proven in \cite{dai2025numerical}. We incorporate the algorithm into adaptive FE discretizations and restate the shifted-inverse based adaptive ParO algorithm as \Cref{algo:pou_n_shifted} as an example of practical adaptive ParO algorithms.
	\begin{algorithm}[htbp]
		\caption{Shifted-inverse based adaptive ParO algorithm }\label{algo:pou_n_shifted}
		\begin{algorithmic}[1]
			\State Given $\mathscr{T}_{h_{0}}$,  $V^{h_{0}}$, $tol_{1}, tol_{2}>0$ and $0<\theta<1$, provide and cluster initial data by (\ref{eq:clustering_eigenpairs}), i.e., $\left\{\left(\lambda_{ij}^{(-1)}, u_{ij}^{(-1)}\right)\right\}_{(1,1)\leqslant(i,j)\leqslant(q',d'_{q})}$.   Let $n=0$ and $\Delta_{1}=2tol_{1}$;
			\While{$\Delta_{1} > tol_1$}
			
			\State Set $\left(\lambda_{ij}^{(n, 0)}, u_{ij}^{(n, 0)}\right)=\left(\lambda_{ij}^{(n-1)}, u_{ij}^{(n-1)}\right)$, $\bar{\lambda}^{(n,0)}_{i}=\mathcal{C}_i(\{\lambda_{ij}^{(n,0)}\}_{j=1}^{d_{i}'})$ as some convex combination of $\{\lambda_{ij}^{(n,0)}\}_{j=1}^{d_{i}'}$, $m=0$ and $\Delta_{2}=2tol_{2}$;
			\While{$\Delta_{2} > tol_2$}
			\State For $(1,1)\leqslant(i,j)\leqslant(q',d_{q}')$, find $u_{ij}^{(n,m+1/2)}\in V^{h_{n}}$ in parallel by solving   
			\begin{align*}
				a(u_{ij}^{(n,m+1/2)},v)-\bar{\lambda}_{i}^{(n,m)}b(u_{ij}^{(n,m+1/2)},v)=\bar{\lambda}_{i}^{(n,m)}b(u_{ij}^{(n,m)},v)\quad\forall v\in V^{h_{n}};
			\end{align*}
			
			\State Solve the eigenvalue problem: find $\left\{\left(\lambda_{ij}^{(n, m+1)}, u_{ij}^{(n, m+1)}\right)\right\}\subset\mathbb{R}\times \tilde{X}_{n,m+1}:=\mathbb{R}\times\operatorname{span}\left\{u_{11}^{(n,m+1/2)}, \ldots, u_{q'd_{q}'}^{(n,m+1/2)}\right\}$ with
			$b(u_{ij}^{(n,m+1)}, u_{kl}^{(n,m+1)})=\delta_{ik}\delta_{jl}$ satisfying
			\begin{align*}
				a(u_{ij}^{(n,m+1)},v)=\lambda_{ij}^{(n,m+1)}b(u_{ij}^{(n,m+1)},v)\quad \forall v\in \tilde{X}_{n,m+1};
			\end{align*}
			
			\State Set $\Delta_{2} = \frac{\sum_{i=1}^{q'}\sum_{j=1}^{d_{i}'} |\lambda_{ij}^{(n,m+1)} - \lambda_{ij}^{(n,m)}|}{\sum_{i=1}^{q'}\sum_{j=1}^{d_{i}'} |\lambda_{ij}^{(n,m)}|}$, $\bar{\lambda}^{(n,m+1)}_{i}=\mathcal{C}_i(\{\lambda_{ij}^{(n, m+1)}\}_{j=1}^{d_{i}'})$ and $m = m + 1$;
			\EndWhile
			
			\State Set $\left(\lambda_{ij}^{(n)}, u_{ij}^{(n)}\right) = \left(\lambda_{ij}^{(n,m)}, u_{ij}^{(n,m)}\right)$ and $\Delta_{1} =\frac{ \sum_{i=1}^{q'}\sum_{j=1}^{d_{i}'} |\lambda_{ij}^{(n)} - \lambda_{ij}^{(n-1)}|}{\sum_{i=1}^{q'}\sum_{j=1}^{d_{i}'} | \lambda_{ij}^{(n)}|}$;
			
			\State Get $\mathscr{T}_{h_{n+1}}=\operatorname{Adaptive\_Refine}(\left\{\left(\lambda_{ij}^{(n)}, u_{ij}^{(n)}\right)\right\}, \mathscr{T}_{h_{n}}, \theta)$ and construct $V^{h_{n+1}}$;
			\State Let $n = n + 1$.
			\EndWhile
		\end{algorithmic}
	\end{algorithm}
	
If we replace $\bar{\lambda}_i^{(n,m)}$ by some fixed shift in Step 5 and update the ParO approximations simply by normalizing the norms in Step 6 in Algorithm \ref{algo:pou_n_shifted}, we will then obtain a simplified shifted-inverse based ParO algorithm, whose convergence on a given mesh is proven in \cite{dai2025numerical}. Due to space limitations, we will not address more details here.
	
	\section{Numerical analysis}
	In this section, we will carry out the numerical analysis of the adaptive ParO method. By investigating 
	the relationships among the error estimators in Section 4.1, we show that the error estimate for the adaptive ParO approximations can be transformed into the error estimate for the adaptive FE approximations, and obtain the contraction property of the adaptive ParO approximations in Section 4.2. In Section 4.3, we apply the quasi-orthogonality to show that after several ParO iterations, the error between the ParO approximations and the FE approximations can be controlled by the error estimator for the FE discretization. Finally, we prove the convergence of the  approximations produced by some practical adaptive ParO algorithms (taking \Cref{algo:pou_n_shifted} as an example) in Section 4.4. 
	
	In this section, we assume that $q'=q$ and $d_{i}'=d_{i}$ for $i=1,\ldots,q'$. 
	It is noteworthy that, the numerical experiments in \cite{dai2014parallel, pan2017parallel} have shown that such an assumption is unnecessary in practical computations.
	
	\subsection{Relationships among different error estimators}
	For ParO approximations, denote $\tilde{M}_{n}(\lambda_{i})=\operatorname{span}\{u_{i1}^{(n)}, \ldots, u_{id_{i}}^{(n)}\}$ for $i=1,\cdots,q$. For convenience, set
\begin{align*}
\tilde{\mathcal{M}}_{n}=\begin{pmatrix}
\tilde{M}_{n}(\lambda_{1}),\ldots, \tilde{M}_{n}(\lambda_{q})
\end{pmatrix}.
\end{align*}
For $i=1,\ldots,q$, we define $\mathcal{P}^{(i)}$ as the orthogonal projection from $M_{h_{n}}(\lambda_{i})$ onto $\tilde{M}_{n}(\lambda_{i})$ with respect to $a(\cdot, \cdot)$. Then when $\operatorname{dist}_{a}(M_{h_{n}}(\lambda_{i}), \tilde{M}_{n}(\lambda_{i}))<1$, $\mathcal{P}^{(i)}$ is one-to-one and onto. Note that $E_{h_{n}}^{(i)}$ is one-to-one and onto when $h_{n}\ll1$. We obtain that the projection $\tilde{E}_{n}^{(i)}: M(\lambda_{i})\rightarrow \tilde{M}_{n}(\lambda_{i})$ defined by $\tilde{E}_{n}^{(i)}=\mathcal{P}^{(i)}\circ E_{h_{n}}^{(i)}$ is also one-to-one and onto when $h_{n}\ll1$ and $\operatorname{dist}_{a}(M_{h_{n}}(\lambda_{i}), \tilde{M}_{n}(\lambda_{i}))<1$. Similar to \cref{eq:oplus_spec_fd}, define the operator
\begin{align}\label{eq:oplus_spec}
 \tilde{\mathcal{E}}_{n}:=\tilde{E}_{n}^{(1)}\oplus\cdots\oplus \tilde{E}_{n}^{(q)}: \mathcal{M}\rightarrow\tilde{\mathcal{M}}_{n}
\end{align}
by 
\begin{align*}
    \tilde{\mathcal{E}}_{n}U=\left(\tilde{E}_{n}^{(1)}u_{11}, \ldots, \tilde{E}_{n}^{(1)}u_{1d_{1}}, \ldots, \tilde{E}_{n}^{(q)}u_{q1}, \ldots, \tilde{E}_{n}^{(q)}u_{qd_{q}}\right),
\end{align*}
where $U=\left(u_{11}, \ldots, u_{1d_{1}}, \ldots, u_{q1}, \ldots, u_{qd_{q}}\right)$.
In our following theoretical analysis, the error estimators $\{\tilde{\eta}_{h_{n}}(\tilde{\mathcal{E}}_{n}U, T)\}_{T\in\mathscr{T}_{h_{n}}}$, which are uncomputable, will be used. We call $\{\tilde{\eta}_{h_{n}}(\tilde{\mathcal{E}}_{n}U, T)\}_{T\in\mathscr{T}_{h_{n}}}$ the theoretical error estimators for the ParO approximations.
	
	We will also apply the error estimators for the FE approximations in our analysis. Denote by $\left\{\left(\lambda_{ij}^{h_{n}}, u_{ij}^{h_{n}}\right)\right\}$ the smallest $N$ eigenpairs of \cref{eq:fd_weak_form_leq} when $V^{h}=V^{h_{n}}$.
	 Similar to the ParO approximations, for $\mathscr{T}_{h_{n}}$ and $w\in \bigoplus_{i=1}^{q}M_{h_{n}}(\lambda_{i})$, we define the element residual $\mathscr{R}_{T}(\cdot)$ in $T\in\mathscr{T}_{h_{n}}$ and the jump residual $\mathscr{J}_{e}(\cdot)$ on $e \in \mathscr{E}_{h_{n}}$ for the FE approximations by 
\begin{align*}
    &\mathscr{R}_{T}(w)=\sum_{i=1}^{q}\sum_{j=1}^{d_{i}}b\left(w, u_{ij}^{h_{n}}\right)\lambda_{ij}^{h_{n}}u_{ij}^{h_{n}}+\nabla\cdot\left(A\nabla w\right)-c w, \\
    &\mathscr{J}_{e}(w)=\llbracket A \nabla w \rrbracket_e \cdot \nu_e.
\end{align*}
For $T\in\mathscr{T}_{h_{n}}$, define the local error indicator $\eta_{h_{n}}(w, T)$ and $\eta_{h_{n}}(W, T)$ for the FE approximations by 
\begin{align*}
    \eta_{h_{n}}^{2}(w, T)&=h_{T}^{2}\left\Vert\mathscr{R}_{T}(w)\right\Vert_{0,T}^{2}+\sum_{e\in\mathscr{E}_{h_{n}}, e\subset\partial T}h_{e}\left\Vert\mathscr{J}_e(w)\right\Vert_{0,e}^{2}, \\
   \eta_{h_{n}}^{2}(W,T)&=\sum_{i=1}^{q}\sum_{j=1}^{d_{i}}\eta_{h_{n}}^{2}(w_{ij}, T),
\end{align*}
where $W=(w_{11}, \ldots,w_{1d_{1}}, \ldots,w_{q1},\ldots, w_{qd_{q}})$.
 Then the global error estimators $\eta_{h_{n}}(w, \Omega)$ and $\eta_{h_{n}}(W, \Omega)$ are defined by
\begin{align*}
    \eta_{h_{n}}^{2}(w, \Omega)=\sum_{T\in\mathscr{T}_{h_{n}}, T\subset\Omega} \eta_{h_{n}}^{2}(w, T),\quad \eta_{h_{n}}^{2}(W, \Omega)=\sum_{T\in\mathscr{T}_{h_{n}}, T\subset\Omega} \eta_{h_{n}}^{2}(W, T).
\end{align*}

By the above definition, if we set $W=U^{h_{n}}:=\left(u_{11}^{h_{n}}, \ldots, u_{1d_{1}}^{h_{n}}, \ldots, u_{q1}^{h_{n}}, \ldots, u_{qd_{q}}^{h_{n}}\right)$, then the corresponding error estimators $\{\eta_{h_{n}}(U^{h_{n}}, T)\}_{T\in\mathscr{T}_{h_{n}}}$ are computable, while if we set $W=\mathcal{E}_{h_{n}}U:=\left(E_{h_{n}}^{(1)}u_{11}, \ldots, E_{h_{n}}^{(1)}u_{1d_{1}},\ldots, E_{h_{n}}^{(q)}u_{q1}, \ldots,  E_{h_{n}}^{(q)}u_{qd_{q}}\right)$, the corresponding error estimators $\{\eta_{h_{n}}(\mathcal{E}_{h_{n}}U, T)\}_{T\in\mathscr{T}_{h_{n}}}$ are uncomputable. In our following analysis, both these two error estimators for the FE approximations will be used. We call the uncomputable error estimators $\{\eta_{h_{n}}(\mathcal{E}_{h_{n}}U, T)\}_{T\in\mathscr{T}_{h_{n}}}$ the theoretical error estimators. Note that the error estimators are the same as those introduced in \cite{dai2015convergence}.
	
	We then start our analysis with investigating  relationships among  computable and theoretical error estimators $\{\tilde{\eta}_{h_{n}}(U^{(n)}, T)\}_{T\in\mathscr{T}_{h_{n}}}$ and $\{\tilde{\eta}_{h_{n}}(\tilde{\mathcal{E}}_{n}U, T)\}_{T\in\mathscr{T}_{h_{n}}}$ for the ParO approximations, together with $\{\eta_{h_{n}}(U^{h_{n}}, T)\}_{T\in\mathscr{T}_{h_{n}}}$ and  $\{\eta_{h_{n}}(\mathcal{E}_{h_{n}}U, T)\}_{T\in\mathscr{T}_{h_{n}}}$ for the FE approximations.
	
	The equivalence of computable and theoretical error estimators for the FE approximations is shown in \cite{ dai2015convergence}. The more explicit constants involved are given in \cite{doi:10.1137/15M1036877}.
	The equivalence of error estimators for the ParO approximations follows from the similar arguments when the approximations are produced by solving \cref{eq:pro_ei_pro} in the $n$-th refinement of \Cref{algo:fram_matr}. We summarize the above conclusions in the following lemma.
	\begin{lemma}\label{prop:eqqqqqq}
		Let $U=\left(u_{11}, \ldots,  u_{qd_{q}}\right)$, where $\{u_{ij}\}_{j=1}^{d_{i}}$ is any orthonormal basis of $M(\lambda_{i}) (i=1,\ldots,q)$. If \begin{align}
			\max_{(1,1)\leqslant(i,j)\leqslant(q,d_{q})}\Vert u_{ij}-\tilde{E}_{n}^{(i)}u_{ij}\Vert_{b}&\leqslant\sqrt{1+(2N)^{-1}}-1,\label{assum1}\\\max_{(1,1)\leqslant(i,j)\leqslant(q,d_{q})}\Vert u_{ij}-E_{h_{n}}^{(i)}u_{ij}\Vert_{b}&\leqslant\sqrt{1+(2N)^{-1}}-1\label{assum2},
		\end{align}then 
		\begin{align*}
			\tilde{\eta}_{h_{n}}^{2}(\tilde{\mathcal{E}}_{n}U,\Omega)\cong \tilde{\eta}_{h_{n}}^{2}(U^{(n)},\Omega),\quad \eta_{h_{n}}^{2}(\mathcal{E}_{h_{n}}U,\Omega)\cong \eta_{h_{n}}^{2}(U^{h_{n}},\Omega).
		\end{align*}
	\end{lemma}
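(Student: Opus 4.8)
The plan is to reduce the equivalence of error estimators to the fact that, under the smallness assumptions \cref{assum1}–\cref{assum2}, the spectral projections $\tilde{\mathcal{E}}_{n}$ and $\mathcal{E}_{h_{n}}$ send an orthonormal basis $U$ of $\bigoplus_{i=1}^{q}M(\lambda_{i})$ to a \emph{nearly} $b$-orthonormal basis of the approximate eigenspace $\bigoplus_{i=1}^{q}\tilde{M}_{n}(\lambda_{i})$ (respectively $\bigoplus_{i=1}^{q}M_{h_{n}}(\lambda_{i})$), and then to invoke the known equivalence for exactly-orthonormal bases. Concretely, writing $\tilde u_{ij}=\tilde E_{n}^{(i)}u_{ij}$, assumption \cref{assum1} gives $\|u_{ij}-\tilde u_{ij}\|_{b}\leqslant\varepsilon_{0}:=\sqrt{1+(2N)^{-1}}-1$, hence $|b(\tilde u_{ij},\tilde u_{kl})-\delta_{ik}\delta_{jl}|$ is $O(\varepsilon_{0})$ and, summing the $N^{2}$ such entries, the Gram matrix $G$ of $\{\tilde u_{ij}\}$ satisfies $\|G-I\|\leqslant\tfrac12$. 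Therefore the $b$-orthonormalization $\{\hat u_{ij}\}=\{\tilde u_{ij}\}G^{-1/2}$ is a genuine orthonormal basis of $\bigoplus_{i=1}^{q}\tilde M_{n}(\lambda_{i})$, and each $\hat u_{ij}$ differs from $\tilde u_{ij}$ by a bounded linear combination of the $\tilde u_{kl}$ with coefficients $O(\varepsilon_{0})$ coming from $G^{-1/2}-I$.

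First I would record, directly from the definitions of $\tilde{\mathscr R}_{T}$, $\tilde{\mathscr J}_{e}$ and $\tilde\eta_{h_{n}}$, that the map $w\mapsto\tilde\eta_{h_{n}}(w,\Omega)$ is a seminorm on $\operatorname{span}\{u_{1}^{(n)},\dots,u_{N}^{(n)}\}=\bigoplus_{i=1}^{q}\tilde M_{n}(\lambda_{i})$ that is bounded, uniformly in $h_{n}$, by the $H^{1}$-norm: indeed $h_{T}\|\tilde{\mathscr R}_{T}(w)\|_{0,T}\lesssim\|w\|_{1,T}$ (using an inverse estimate for the $\nabla\cdot(A\nabla w)$ term on the FE space, boundedness of $c$, and $h_{T}\leqslant\operatorname{diam}\Omega$ for the data term) and $h_{e}^{1/2}\|\tilde{\mathscr J}_{e}(w)\|_{0,e}\lesssim\|w\|_{1,\omega_{e}}$ likewise, so that $\tilde\eta_{h_{n}}(w,\Omega)\lesssim\|w\|_{1,\Omega}$. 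Next, since all norms on the finite set of eigenspaces are comparable, $\|\hat u_{ij}-\tilde u_{ij}\|_{1,\Omega}\lesssim\|\hat u_{ij}-\tilde u_{ij}\|_{b}\lesssim\varepsilon_{0}$ and also $\|\tilde u_{ij}\|_{1,\Omega}\lesssim 1$; combining with the seminorm bound and the triangle inequality for $\tilde\eta_{h_{n}}(\cdot,\Omega)$ gives $|\tilde\eta_{h_{n}}(\hat u_{ij},\Omega)-\tilde\eta_{h_{n}}(\tilde u_{ij},\Omega)|\lesssim\varepsilon_{0}$. Summing over $(i,j)$ and squaring yields $\tilde\eta_{h_{n}}^{2}(\hat U,\Omega)\cong\tilde\eta_{h_{n}}^{2}(\tilde{\mathcal E}_{n}U,\Omega)$, where $\hat U=(\hat u_{11},\dots,\hat u_{qd_{q}})$; here one must also check the reverse-type lower bound, which follows from the same argument with the roles of $\hat u_{ij}$ and $\tilde u_{ij}$ interchanged together with the efficiency (lower) bound of the residual estimator. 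Since $\hat U$ is an orthonormal basis of the same space spanned by the ParO approximations produced via \cref{eq:pro_ei_pro}, the equivalence $\tilde\eta_{h_{n}}^{2}(\hat U,\Omega)\cong\tilde\eta_{h_{n}}^{2}(U^{(n)},\Omega)$ is precisely the basis-independence of the clustered estimator established in \cite{dai2015convergence}; chaining the two equivalences gives the first assertion. The second assertion, $\eta_{h_{n}}^{2}(\mathcal E_{h_{n}}U,\Omega)\cong\eta_{h_{n}}^{2}(U^{h_{n}},\Omega)$, is obtained by the identical argument with $\tilde E_{n}^{(i)}$, $\tilde M_{n}(\lambda_{i})$, $\tilde\eta_{h_{n}}$ replaced by $E_{h_{n}}^{(i)}$, $M_{h_{n}}(\lambda_{i})$, $\eta_{h_{n}}$ and assumption \cref{assum1} replaced by \cref{assum2}; this case is already contained in \cite{dai2015convergence, doi:10.1137/15M1036877}, so I would simply cite it.

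The main obstacle is controlling the conditioning of the Gram matrix $G$ of $\{\tilde E_{n}^{(i)}u_{ij}\}$ uniformly in $n$: this is exactly what the precise threshold $\varepsilon_{0}=\sqrt{1+(2N)^{-1}}-1$ in \cref{assum1} is engineered to guarantee. The point is that the off-diagonal and diagonal perturbations of $G$ are each bounded by a small multiple of $\varepsilon_{0}$, and the factor $(2N)^{-1}$ ensures that, after summing the $N^{2}$ entries of $G-I$, one still has $\|G-I\|\leqslant\tfrac12$, so $G^{-1/2}$ exists with $\|G^{-1/2}-I\|\lesssim\varepsilon_{0}$ by the standard Neumann-series estimate. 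Once this bound is in hand, every other step is a routine triangle-inequality and finite-dimensional norm-equivalence computation, uniform in $h_{n}$ because all constants depend only on $\Omega$, $A$, $c$, $\gamma^{*}$, $k$ and $N$, none of which vary along the refinement. I would therefore present the Gram-matrix estimate as the one self-contained lemma of the proof and treat the rest as bookkeeping.
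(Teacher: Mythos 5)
Your overall plan (pass to an orthonormalization $\hat U$ of $\tilde{\mathcal E}_n U$ via the Gram matrix and then use basis independence of the estimator) is reasonable, and the Gram-matrix estimate $\|G-I\|\leqslant\tfrac12$ engineered from the threshold $\sqrt{1+(2N)^{-1}}-1$ is correctly worked out. The paper itself does not spell out a proof, citing \cite{dai2015convergence, doi:10.1137/15M1036877} for the finite-element side and ``similar arguments'' for the ParO side, so the comparison is to what a correct argument would have to contain.

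There is, however, a genuine gap in the middle step. You establish an \emph{absolute} bound
$\left|\tilde\eta_{h_n}(\hat u_{ij},\Omega)-\tilde\eta_{h_n}(\tilde u_{ij},\Omega)\right|\lesssim\varepsilon_0$
by detouring through $\|\hat u_{ij}-\tilde u_{ij}\|_{1,\Omega}\lesssim\varepsilon_0$ and the upper bound $\tilde\eta_{h_n}(\cdot,\Omega)\lesssim\|\cdot\|_{1,\Omega}$. But $\varepsilon_0$ is a fixed positive constant (once $N$ is fixed), whereas $\tilde\eta_{h_n}(\tilde u_{ij},\Omega)$ tends to zero as the mesh is refined. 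An additive perturbation of size $\varepsilon_0$ therefore does \emph{not} give $\tilde\eta_{h_n}^2(\hat U,\Omega)\cong\tilde\eta_{h_n}^2(\tilde{\mathcal E}_nU,\Omega)$ with constants independent of $h_n$; the appeal to ``the efficiency (lower) bound of the residual estimator'' does not rescue this, since efficiency bounds compare the estimator to the error, they do not bound the estimator away from zero. A secondary issue: the inequality $\|\hat u_{ij}-\tilde u_{ij}\|_{1,\Omega}\lesssim\|\hat u_{ij}-\tilde u_{ij}\|_{b}$ cannot be justified by ``all norms on a finite-dimensional space are comparable,'' because the space depends on $n$ and such an inverse-type constant on $V^{h_n}$ blows up as $h_n\to0$. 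It does hold because $\hat u_{ij}-\tilde u_{ij}\in\operatorname{span}\{u_k^{(n)}\}$ and $\|w\|_a^2=\sum_k\lambda_k^{(n)}|b(w,u_k^{(n)})|^2\leqslant\lambda_N^{(n)}\|w\|_b^2$ with $\lambda_N^{(n)}$ uniformly bounded, but even then the resulting bound is absolute, not relative.

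The fix is to keep the estimate on the scale of the estimator. Since $\hat u_{ij}-\tilde u_{ij}=\sum_{k,l}c^{(ij)}_{kl}\tilde u_{kl}$ with $|c^{(ij)}_{kl}|\lesssim\varepsilon_0$ (from $\|G^{-1/2}-I\|\lesssim\varepsilon_0$), and $w\mapsto\tilde\eta_{h_n}(w,\Omega)$ is a seminorm on $\bigoplus_i\tilde M_n(\lambda_i)$ because both $\tilde{\mathscr R}_T$ and $\tilde{\mathscr J}_e$ are linear there, subadditivity gives
\begin{equation*}
\tilde\eta_{h_n}(\hat u_{ij}-\tilde u_{ij},\Omega)\leqslant\sum_{k,l}\bigl|c^{(ij)}_{kl}\bigr|\,\tilde\eta_{h_n}(\tilde u_{kl},\Omega)\lesssim\varepsilon_0 N^{1/2}\,\tilde\eta_{h_n}(\tilde{\mathcal E}_nU,\Omega),
\end{equation*}
which is relative; together with $\varepsilon_0 N^{1/2}$ being small by the choice of the threshold, summing and squaring now does yield the equivalence. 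Even cleaner is to skip the orthonormalization altogether: writing $W=CU^{(n)}$ and $Q_{kl}=\sum_T\bigl(h_T^2(\tilde{\mathscr R}_T(u_k^{(n)}),\tilde{\mathscr R}_T(u_l^{(n)}))_{0,T}+\sum_{e\subset\partial T}h_e(\tilde{\mathscr J}_e(u_k^{(n)}),\tilde{\mathscr J}_e(u_l^{(n)}))_{0,e}\bigr)$, one has $\tilde\eta_{h_n}^2(W,\Omega)=\operatorname{trace}(QC^\top C)$ with $Q\succcurlyeq0$ and $\operatorname{trace}(Q)=\tilde\eta_{h_n}^2(U^{(n)},\Omega)$, so the ratio is squeezed between $\sigma_{\min}(C)^2$ and $\sigma_{\max}(C)^2$, and the Gram bound $\|CC^\top-I\|\leqslant\tfrac12$ you already proved finishes the argument in one line. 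Either route avoids the $H^1$ detour where the relative scale is lost.
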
   
	
	The following conclusion follows from \Cref{prop:eqqqqqq} and tells that the D\"orfler property \cref{ine:dor} holds for theoretical error estimators if the computable error estimators satisfy the D\"orfler property \cref{ine:dor}.
	\begin{corollary}
		Given $\theta_{1}, \theta_{2}\in(0,1)$, if 
		\begin{align*}
			\sum_{T \in \hat{\mathscr{T}}_{h_{n}}}\tilde{\eta}_{h_{n}}^{2}(U^{(n)},T) \geqslant \theta_{1} \tilde{\eta}_{h_{n}}^{2}(U^{(n)},\Omega),\quad
			\sum_{T \in \hat{\mathscr{T}}_{h_{n}}}\eta_{h_{n}}^{2}(U^{h_{n}},T) \geqslant \theta_{2} \eta_{h_{n}}^{2}(U^{h_{n}},\Omega), 
		\end{align*}
		then there exists $\theta_{1}', \theta_{2}'\in(0,1)$ such that, for $U=\left(u_{11}, \ldots, u_{1d_1},\ldots,u_{q1},\ldots ,u_{qd_{q}}\right)$, where $\{u_{ij}\}_{j=1}^{d_{i}}$ is any orthonormal basis of $M(\lambda_{i}) (i=1,\ldots,q)$ satisfying \Cref{assum1} and \Cref{assum2}, there hold
		\begin{align*}
			\sum_{T \in \hat{\mathscr{T}}_{h_{n}}}\tilde{\eta}_{h_{n}}^{2}(\tilde{\mathcal{E}}_{n}U,T) \geqslant \theta_{1}' \tilde{\eta}_{h_{n}}^{2}(\tilde{\mathcal{E}}_{n}U,\Omega),\quad \sum_{T \in \hat{\mathscr{T}}_{h_{n}}}\eta_{h_{n}}^{2}(\mathcal{E}_{h_{n}}U,T) \geqslant \theta_{2}' \eta_{h_{n}}^{2}(\mathcal{E}_{h_{n}}U,\Omega).
		\end{align*}
	\end{corollary}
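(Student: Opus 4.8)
The plan is to derive the corollary as a direct consequence of \Cref{prop:eqqqqqq}, exploiting the fact that equivalence of global error estimators propagates to a D\"orfler-type bound over any subset $\hat{\mathscr{T}}_{h_{n}}$, albeit with a degraded parameter. First I would invoke \Cref{prop:eqqqqqq} under the hypotheses \cref{assum1} and \cref{assum2}: there exist constants $C_{1}, c_{1}>0$ (independent of $h_{n}$) with $c_{1}\tilde{\eta}_{h_{n}}^{2}(U^{(n)},\Omega)\leqslant\tilde{\eta}_{h_{n}}^{2}(\tilde{\mathcal{E}}_{n}U,\Omega)\leqslant C_{1}\tilde{\eta}_{h_{n}}^{2}(U^{(n)},\Omega)$, and likewise constants $C_{2}, c_{2}>0$ for the pair $\eta_{h_{n}}^{2}(\mathcal{E}_{h_{n}}U,\Omega)$ and $\eta_{h_{n}}^{2}(U^{h_{n}},\Omega)$. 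The point is that the same local-indicator equivalence that underlies the proof of \Cref{prop:eqqqqqq} holds elementwise (indeed the argument there is a summation of local estimates), so one also has $\tilde{\eta}_{h_{n}}^{2}(U^{(n)},T)\lesssim\tilde{\eta}_{h_{n}}^{2}(\tilde{\mathcal{E}}_{n}U,T)$ for each $T$, and similarly for the FE indicators; summing over $T\in\hat{\mathscr{T}}_{h_{n}}$ controls the subset sum of the computable indicators by the subset sum of the theoretical ones.

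Next I would chain the inequalities. For the ParO estimators, starting from the hypothesis $\sum_{T\in\hat{\mathscr{T}}_{h_{n}}}\tilde{\eta}_{h_{n}}^{2}(U^{(n)},T)\geqslant\theta_{1}\tilde{\eta}_{h_{n}}^{2}(U^{(n)},\Omega)$, I bound the left side below by a constant multiple of $\sum_{T\in\hat{\mathscr{T}}_{h_{n}}}\tilde{\eta}_{h_{n}}^{2}(\tilde{\mathcal{E}}_{n}U,T)$ (using the elementwise lower estimate in the direction $\tilde{\mathcal{E}}_{n}U\lesssim U^{(n)}$), and bound the right side above using $\tilde{\eta}_{h_{n}}^{2}(U^{(n)},\Omega)\geqslant C_{1}^{-1}\tilde{\eta}_{h_{n}}^{2}(\tilde{\mathcal{E}}_{n}U,\Omega)$. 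This yields $\sum_{T\in\hat{\mathscr{T}}_{h_{n}}}\tilde{\eta}_{h_{n}}^{2}(\tilde{\mathcal{E}}_{n}U,T)\geqslant\theta_{1}'\tilde{\eta}_{h_{n}}^{2}(\tilde{\mathcal{E}}_{n}U,\Omega)$ with $\theta_{1}'=\theta_{1}/(C_{1}\cdot(\text{elementwise constant}))$, which is a positive number; and since $\theta_{1}<1$ and the estimators on both sides are comparable, one can always take $\theta_{1}'<1$ (shrinking it if the above expression exceeds $1$, which is harmless). The FE case with $\theta_{2}'$ is entirely parallel, using the constants $C_{2}, c_{2}$.

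The main obstacle — really the only subtlety — is justifying that the equivalence in \Cref{prop:eqqqqqq} can be localized to individual elements $T$, since the statement of \Cref{prop:eqqqqqq} is phrased only for the global quantities $\tilde{\eta}_{h_{n}}^{2}(\cdot,\Omega)$. I would address this by recalling that the proof of \Cref{prop:eqqqqqq} (following \cite{dai2015convergence}) proceeds by comparing the residuals $\tilde{\mathscr{R}}_{T}$ and jump terms $\tilde{\mathscr{J}}_{e}$ of $U^{(n)}$ and of $\tilde{\mathcal{E}}_{n}U$ on each $T$ and $e$ separately, with the discrepancy controlled by $\Vert u_{ij}-\tilde{E}_{n}^{(i)}u_{ij}\Vert_{b}$; hence the relevant one-sided estimate $\tilde{\eta}_{h_{n}}^{2}(U^{(n)},T)\lesssim\tilde{\eta}_{h_{n}}^{2}(\tilde{\mathcal{E}}_{n}U,T)+(\text{small})\,\tilde{\eta}_{h_{n}}^{2}(\tilde{\mathcal{E}}_{n}U,\omega_{T})$ over an element patch $\omega_{T}$ holds, and summing over $\hat{\mathscr{T}}_{h_{n}}$ with finite overlap of patches gives the needed subset inequality. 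If one prefers to avoid patches entirely, an alternative is to simply observe that the hypotheses force $\tilde{\eta}_{h_{n}}^{2}(U^{(n)},\Omega)$ to be concentrated on $\hat{\mathscr{T}}_{h_{n}}$ up to the factor $\theta_{1}$, and then transfer this concentration to $\tilde{\mathcal{E}}_{n}U$ through the global equivalence together with the nonnegativity of all local indicators; this gives a (possibly worse) admissible $\theta_{1}'$ without any localization of the equivalence, at the cost of a slightly more careful bookkeeping of which global estimator bounds which. Either route completes the proof.
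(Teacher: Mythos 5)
Your high‑level plan — localize the equivalence in \Cref{prop:eqqqqqq} to the element level, then chain a local lower bound over $\hat{\mathscr{T}}_{h_{n}}$ with a global upper bound — is the right one, and the paper gives no explicit proof to compare against, saying only that the corollary ``follows from'' \Cref{prop:eqqqqqq}. Your chain of inequalities, once the directional slips in wording are straightened out, reduces to
\begin{equation*}
\sum_{T\in\hat{\mathscr{T}}_{h_{n}}}\tilde{\eta}_{h_{n}}^{2}(\tilde{\mathcal{E}}_{n}U,T)\;\geqslant\; c\sum_{T\in\hat{\mathscr{T}}_{h_{n}}}\tilde{\eta}_{h_{n}}^{2}(U^{(n)},T)\;\geqslant\; c\,\theta_{1}\,\tilde{\eta}_{h_{n}}^{2}(U^{(n)},\Omega)\;\geqslant\; \frac{c\,\theta_{1}}{C_{1}}\,\tilde{\eta}_{h_{n}}^{2}(\tilde{\mathcal{E}}_{n}U,\Omega),
\end{equation*}
so $\theta_{1}'=c\theta_{1}/C_{1}\in(0,1)$, and the FE case is identical. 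That part is fine.

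However, the justification you give for the elementwise comparison is not the mechanism actually at work, and your proposed fallback route is incorrect. The elementwise bound $\tilde{\eta}_{h_{n}}^{2}(U^{(n)},T)\cong\tilde{\eta}_{h_{n}}^{2}(\tilde{\mathcal{E}}_{n}U,T)$ does not come from comparing residuals of nearby functions over patches, as in the proof of \Cref{prop:error_est}; it comes from the fact that $\tilde{E}_{n}^{(i)}u_{ij}$ and $u_{ij}^{(n)}$ span \emph{the same block spaces} $\tilde{M}_{n}(\lambda_{i})$. For fixed $n$ the maps $w\mapsto\tilde{\mathscr{R}}_{T}(w)$ and $w\mapsto\tilde{\mathscr{J}}_{e}(w)$ are linear, so $\tilde{\eta}_{h_{n}}^{2}(W,T)$ is a quadratic form in the tuple $W$; writing $\tilde{\mathcal{E}}_{n}U=U^{(n)}Q$ with a block‑diagonal transition matrix $Q$, one gets $\tilde{\eta}_{h_{n}}^{2}(\tilde{\mathcal{E}}_{n}U,T)=\operatorname{tr}\!\left(QQ^{\!\top}G_{T}\right)$ with $G_{T}\succeq0$, and hence $\left|\tilde{\eta}_{h_{n}}^{2}(\tilde{\mathcal{E}}_{n}U,T)-\tilde{\eta}_{h_{n}}^{2}(U^{(n)},T)\right|\leqslant\left\Vert QQ^{\!\top}-\mathrm{I}\right\Vert\,\tilde{\eta}_{h_{n}}^{2}(U^{(n)},T)$ on every single element, with no patch and no additive remainder. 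The hypotheses \cref{assum1}--\cref{assum2} are calibrated precisely so that $\left\Vert QQ^{\!\top}-\mathrm{I}\right\Vert<1$. Your described ``discrepancy controlled by $\Vert u_{ij}-\tilde{E}_{n}^{(i)}u_{ij}\Vert_{b}$'' confuses the true eigenfunction $u_{ij}$ with the computed $u_{ij}^{(n)}$ and mixes up this argument with the genuinely different one in \Cref{prop:error_est}, where the two tuples live in \emph{different} discrete spaces.

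Your second, ``patch‑free'' fallback — using only the global equivalence together with nonnegativity of the local indicators — is a genuine gap: it does not work. Global equivalence $c_{1}A_{\Omega}\leqslant B_{\Omega}\leqslant C_{1}A_{\Omega}$ places no constraint on how $B$ is distributed over elements, so the D\"orfler property for $A$ can be completely destroyed for $B$. Take $\mathscr{T}=\{T_{1},T_{2}\}$, $\hat{\mathscr{T}}=\{T_{1}\}$, $A_{T_{1}}=1$, $A_{T_{2}}=0$, $B_{T_{1}}=0$, $B_{T_{2}}=1$: then $A_{\Omega}=B_{\Omega}$, the D\"orfler condition holds for $A$ with any $\theta\leqslant1$, but $\sum_{T\in\hat{\mathscr{T}}}B_{T}=0$. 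So you cannot ``avoid localization entirely''; the elementwise (basis‑change) equivalence is essential, and the proof should use it rather than the global statement of \Cref{prop:eqqqqqq} alone.
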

	
	The following proposition provides an estimation between theoretical error estimators for the ParO approximations and the FE approximations, which will play a crucial role in our analysis. The proof is provided in \Cref{proof:prop:error_est}.
	\begin{proposition}\label{prop:error_est}
		If $h_{0}\ll1$ and $U=\left(u_{11}, \ldots,  u_{qd_{q}}\right)$, where $\{u_{ij}\}_{j=1}^{d_{i}}$ is an orthonormal basis of $M(\lambda_{i}) (i=1,\ldots,q)$ with $b(u_{ij}, u_{kl})=\delta_{ik}\delta_{jl}$, then 
		\begin{align*}
			\left|\eta_{h_{n}}(\mathcal{E}_{h_{n}}U, \Omega)-\tilde{\eta}_{h_{n}}(\tilde{\mathcal{E}}_{n}U, \Omega)\right|^{2}\leqslant C\sum_{i=1}^{q}\left(\operatorname{dist}^{2}_{a}\left(M_{h_{n}}(\lambda_{i}), \tilde{M}_{n}(\lambda_{i})\right)+\sum_{j=1}^{d_{i}}\left|\lambda_{ij}^{h_{n}}-\lambda_{ij}^{(n)}\right|^{2}\right),
		\end{align*}
		where $C>0 $ is a constant being independent of the mesh size.
	\end{proposition}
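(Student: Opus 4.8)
The plan is to estimate the difference of the two global error estimators directly from the definitions, exploiting that they are built from the same operator $\mathcal{L}$ acting on nearby finite-dimensional spaces. Recall $\eta_{h_n}^2(\mathcal{E}_{h_n}U,\Omega)=\sum_{i,j}\eta_{h_n}^2(E_{h_n}^{(i)}u_{ij},\Omega)$ and $\tilde\eta_{h_n}^2(\tilde{\mathcal{E}}_nU,\Omega)=\sum_{i,j}\tilde\eta_{h_n}^2(\tilde E_n^{(i)}u_{ij},\Omega)$, where by definition $\tilde E_n^{(i)}=\mathcal{P}^{(i)}\circ E_{h_n}^{(i)}$, so $\tilde E_n^{(i)}u_{ij}$ is just the $a$-orthogonal projection of $E_{h_n}^{(i)}u_{ij}$ onto $\tilde M_n(\lambda_i)$. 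First I would use the elementary inequality $\big|\sqrt{x}-\sqrt{y}\big|\le\sqrt{|x-y|}$ (or rather $(\sqrt X-\sqrt Y)^2\le |X-Y|$ applied to the sums of squares) to reduce the claim to bounding, for each $T$ and each pair $(i,j)$, the quantities $\big|h_T^2(\|\tilde{\mathscr R}_T(\tilde E_n^{(i)}u_{ij})\|_{0,T}^2-\|\mathscr R_T(E_{h_n}^{(i)}u_{ij})\|_{0,T}^2)\big|$ and the analogous jump terms, then summing over $T$ and $(i,j)$.

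Next I would split each local residual difference using the triangle inequality and the algebraic identity $\|p\|^2-\|r\|^2=(\|p\|-\|r\|)(\|p\|+\|r\|)$. The key observation is that $\tilde{\mathscr R}_T$ and $\mathscr R_T$ differ only in their zeroth-order ("data") parts: $\tilde{\mathscr R}_T(w)=\sum_{k}b(w,u_k^{(n)})\lambda_k^{(n)}u_k^{(n)}+\nabla\cdot(A\nabla w)-cw$ versus $\mathscr R_T(w)=\sum_{i,j}b(w,u_{ij}^{h_n})\lambda_{ij}^{h_n}u_{ij}^{h_n}+\nabla\cdot(A\nabla w)-cw$, while the jump residuals $\tilde{\mathscr J}_e$ and $\mathscr J_e$ have the identical form $\llbracket A\nabla w\rrbracket_e\cdot\nu_e$. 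Hence when I write $w_1=\tilde E_n^{(i)}u_{ij}$ and $w_2=E_{h_n}^{(i)}u_{ij}$, the residual difference splits into (a) the part coming from $\nabla\cdot(A\nabla(w_1-w_2))-c(w_1-w_2)$, controlled by $\|w_1-w_2\|_a$ times inverse-estimate and $h$-scaling factors on each element, which after summation and the standard scaled trace/inverse inequalities is bounded by $C\|w_1-w_2\|_a^2\le C\,\mathrm{dist}_a^2(M_{h_n}(\lambda_i),\tilde M_n(\lambda_i))$ since $w_1$ is the $a$-projection of $w_2$ onto $\tilde M_n(\lambda_i)$; and (b) the part coming from the difference of the two discrete "load" sums $\sum_k b(w_1,u_k^{(n)})\lambda_k^{(n)}u_k^{(n)}-\sum_{i,j}b(w_2,u_{ij}^{h_n})\lambda_{ij}^{h_n}u_{ij}^{h_n}$, which I would bound by inserting and subtracting, using $\|w_1-w_2\|_b\lesssim\|w_1-w_2\|_a$, the boundedness of the discrete eigenfunctions and eigenvalues (uniformly in $n$ for $h_0\ll1$, by Lemma~\ref{thm:cluster_eigen} and Proposition~\ref{thm:cluster_eigenfunc}), and the fact that $\mathrm{dist}_a(M_{h_n}(\lambda_i),\tilde M_n(\lambda_i))<1$ forces a uniform $L^2$-closeness between the bases $\{u_k^{(n)}\}$ and $\{u_{ij}^{h_n}\}$ up to an orthogonal transformation; this contributes the $\mathrm{dist}_a^2$ term together with $\sum_j|\lambda_{ij}^{h_n}-\lambda_{ij}^{(n)}|^2$.

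A subtlety I would be careful about: the ParO eigenfunctions $u_k^{(n)}$ span $\bigoplus_i\tilde M_n(\lambda_i)$ but are not individually close to a fixed $u_{ij}^{h_n}$ — only the spanned subspaces (and the eigenvalue labels) are close. So in step (b) I would work cluster-by-cluster and use that the "load operator" $w\mapsto\sum_{\ell\in\text{cluster }i}b(w,u_{i\ell}^{(n)})\lambda_{i\ell}^{(n)}u_{i\ell}^{(n)}$ restricted to $\tilde M_n(\lambda_i)$ differs from $w\mapsto\sum_\ell b(w,u_{i\ell}^{h_n})\lambda_{i\ell}^{h_n}u_{i\ell}^{h_n}$ restricted to $M_{h_n}(\lambda_i)$ in operator norm by $O(\mathrm{dist}_a(M_{h_n}(\lambda_i),\tilde M_n(\lambda_i))+\max_\ell|\lambda_{i\ell}^{h_n}-\lambda_{i\ell}^{(n)}|)$, which is basis-independent and therefore immune to the arbitrary orthogonal rotation. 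Finally I would collect all local contributions, use shape-regularity to sum $h_T$-weighted element and edge terms against $\|\cdot\|_a^2$ globally, sum over the finitely many indices $(i,j)$, and take square roots to land on the stated inequality with a constant $C$ depending only on the coefficients $A,c$, the shape-regularity constant $\gamma^*$, the polynomial degree $k$, and $N$. The main obstacle I anticipate is precisely this rotation/basis-matching issue in step (b): making rigorous that the discrete-load difference is controlled by the basis-free quantities $\mathrm{dist}_a(M_{h_n}(\lambda_i),\tilde M_n(\lambda_i))$ and $|\lambda_{ij}^{h_n}-\lambda_{ij}^{(n)}|$ rather than by individual eigenfunction differences, which would not be small. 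Everything else is a routine residual-estimate bookkeeping exercise with scaled trace and inverse inequalities.
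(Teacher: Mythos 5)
Your overall decomposition — split the residual difference into a ``derivative'' part controlled by $\|w_1-w_2\|_a$ via inverse/trace inequalities and a ``load'' part controlled by subspace distance plus eigenvalue differences — is the same skeleton the paper follows, and you have correctly spotted the central subtlety (the individual $u_k^{(n)}$ are not close to individual $u_{ij}^{h_n}$). However, there are two genuine gaps.

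First, the opening reduction is wrong for the inequality you are trying to prove. You propose $(\sqrt{X}-\sqrt{Y})^2\leqslant|X-Y|$ followed by $\|p\|^2-\|r\|^2=(\|p\|-\|r\|)(\|p\|+\|r\|)$ and Cauchy–Schwarz, which lands you on $|\eta-\tilde{\eta}|^2\lesssim(\eta+\tilde{\eta})\bigl(\sum_T|\Delta\mathrm{local}_T|^2\bigr)^{1/2}$. The extra factor $(\eta+\tilde{\eta})$ is \emph{not} small, so this cannot recover the stated quadratic bound in $\operatorname{dist}_a$ and $|\lambda^{h_n}_{ij}-\lambda^{(n)}_{ij}|$. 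The paper instead uses the reverse triangle inequality in $\ell^2$ twice (once over the $N$ eigenfunctions, once over $T\in\mathscr{T}_{h_n}$) to get the sharper reduction $|\eta-\tilde{\eta}|^2\leqslant\sum_T\bigl(h_T^2\|\mathscr{R}_T-\tilde{\mathscr{R}}_T\|_{0,T}^2+\sum_e h_e\|\mathscr{J}_e-\tilde{\mathscr{J}}_e\|_{0,e}^2\bigr)$, i.e.\ \cref{ine:summm}, with no leftover $(\eta+\tilde{\eta})$ factor. (Your factored version is what the paper actually needs in the proof of \Cref{thm:dof}, not here.)

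Second, and more substantively, the bound on the load-difference is merely asserted as an ``operator norm $O(\operatorname{dist}_a+\max|\Delta\lambda|)$'' fact, but this is exactly where the real work sits, and you do not supply a proof. The paper does not invoke any abstract spectral perturbation result; it first splits the load difference by keeping $\lambda_{rs}^{h_n}$ as a common weight (\cref{ine:firrrrrr}), and then proves the crucial inequality \cref{ine:pere},
$\bigl\|\sum_{r,s}\lambda_{rs}^{h_n}(\alpha_{rs}u_{rs}^{h_n}-\beta_{rs}u_{rs}^{(n)})\bigr\|_b^2\lesssim\|E_{h_n}^{(i)}u_{ij}-\tilde{E}_n^{(i)}u_{ij}\|_a^2$,
by expanding both $\|\cdot\|_b^2$ and $\|\cdot\|_a^2$ in the $(\alpha,\beta)$ coordinates, multiplying the $a$-norm identity by an auxiliary constant $\hat{C}\geqslant\lambda_{q+1}$, subtracting, and recognising the result as a sum of nonnegative terms using the minimum–maximum principle $\lambda_{rs}^{(n)}\geqslant\lambda_{rs}^{h_n}$. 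This algebraic step is the heart of the proof and is what makes the bound basis-free without any black-box perturbation lemma; your sketch does not engage with it. To make your route rigorous you would need to actually establish the operator-norm perturbation estimate for $w\mapsto\sum_{r,s}b(w,u_{rs}^{\cdot})\lambda_{rs}^{\cdot}u_{rs}^{\cdot}$ in $L^2$, which is nontrivial precisely because of the rotation issue you identified, and you would still need to fix the reduction step above.
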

	\begin{remark}
		\Cref{prop:error_est} can be further strengthened. Taking \Cref{algo:pou_n_shifted} as an example, if  $\{\lambda_{ij}^{(n)}\}$ is produced by solving the projected eigenvalue problem \cref{eq:pro_ei_pro} and $\operatorname{dist}_{a}\left(M_{h_{n}}(\lambda_{i}), \tilde{M}_{n}(\lambda_{i})\right)\ll1$, it follows from \Cref{thm:cluster_eigen}  that
		\begin{align*}
			\left|\lambda_{ij}^{h_{n}}-\lambda_{ij}^{(n)}\right|\lesssim \operatorname{dist}^{2}_{a}\left(M_{h_{n}}(\lambda_{i}), \tilde{M}_{n}(\lambda_{i})\right),\quad (1,1)\leqslant(i,j)\leqslant(q,d_{q}),
		\end{align*}
        which leads to
		\begin{align*}
			\left|\eta_{h_{n}}(\mathcal{E}_{h_{n}}U, \Omega)-\tilde{\eta}_{h_{n}}(\tilde{\mathcal{E}}_{n}U, \Omega)\right|^{2}\lesssim \sum_{i=1}^{q}\operatorname{dist}^{2}_{a}\left(M_{h_{n}}(\lambda_{i}), \tilde{M}_{n}(\lambda_{i})\right).
		\end{align*}
	\end{remark}
	
	The following proposition illustrates that if the ParO iterations reach a certain accuracy in the $n$-th refinement, and the D\"orfler property \cref{ine:dor} is satisfied for the error estimators of the ParO approximations, then the property holds for the error estimators for the FE approximations as well. The proof is provided in \Cref{proof:thm:dof}.
	\begin{proposition}\label{thm:dof}
		Given $\theta\in(0,1)$, suppose $h_{n}\leqslant h_{0}\ll1$ and there exists $\rho>0$ satisfying
		\begin{align}\label{ine:theta}
		    \sqrt{2\left(C\rho+2\sqrt{C\rho}+2\right)C\rho}<\theta,
		\end{align}
 where $C$ is the constant appearing in \Cref{prop:error_est}, such that
		\begin{align}\label{ine:approxxxx}
			\sum_{i=1}^{q}\left(\operatorname{dist}^{2}_{a}\left(M_{h_{n}}(\lambda_{i}), \tilde{M}_{n}(\lambda_{i})\right)+\sum_{j=1}^{d_{i}}\left|\lambda_{ij}^{h_{n}}-\lambda_{ij}^{(n)}\right|^{2}\right)\leqslant\rho\tilde{\eta}_{h_{n}}^{2}\left(\tilde{\mathcal{E}}_{n}U, \Omega\right).
		\end{align}
		If
		\begin{align}\label{ine:dor_eta}
			\sum_{T \in \hat{\mathscr{T}}_{h_{n}}}\tilde{\eta}_{h_{n}}^{2}(\tilde{\mathcal{E}}_{n}U,T) \geqslant \theta \tilde{\eta}_{h_{n}}^{2}(\tilde{\mathcal{E}}_{n}U,\Omega),
		\end{align}
		then 
		\begin{align}\label{ine:dorrr}
			\sum_{T \in \hat{\mathscr{T}}_{h_{n}}}\eta_{h_{n}}^{2}(\mathcal{E}_{h_{n}}U,T) \geqslant \tilde{\theta} \eta_{h_{n}}^{2}(\mathcal{E}_{h_{n}}U,\Omega)\quad \text{with}\quad \tilde{\theta}\in\left(0, \frac{\theta-\sqrt{2\left(C\rho+2\sqrt{C\rho}+2\right)C\rho}}{\left(1+\sqrt{C\rho}\right)^{2}}\right].
		\end{align}
	\end{proposition}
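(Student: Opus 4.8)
The plan is to obtain \cref{ine:dorrr} by viewing the theoretical FE estimator as a perturbation of the theoretical ParO estimator and transferring the D\"orfler property \cref{ine:dor_eta} accordingly. Throughout, write $a_T:=\tilde\eta_{h_n}(\tilde{\mathcal{E}}_nU,T)$ and $b_T:=\eta_{h_n}(\mathcal{E}_{h_n}U,T)$ for $T\in\mathscr{T}_{h_n}$, set $\tilde\eta:=\tilde\eta_{h_n}(\tilde{\mathcal{E}}_nU,\Omega)=\big(\sum_{T\in\mathscr{T}_{h_n}}a_T^2\big)^{1/2}$ and $\eta:=\eta_{h_n}(\mathcal{E}_{h_n}U,\Omega)=\big(\sum_{T\in\mathscr{T}_{h_n}}b_T^2\big)^{1/2}$, and let $\delta^2$ denote the left-hand side of \cref{ine:approxxxx}, so that $\delta^2\leqslant\rho\tilde\eta^2$. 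The first step is to record the elementwise form of \Cref{prop:error_est}: running the argument of \Cref{proof:prop:error_est} without summing the local contributions prematurely — the residual differences on each $T\in\mathscr{T}_{h_n}$ and each face $e\subset\partial T$ are bounded by local $L^2$-norms of $\tilde{E}_n^{(i)}u_{ij}-E_{h_n}^{(i)}u_{ij}$ and by $|\lambda_{ij}^{h_n}-\lambda_{ij}^{(n)}|$ — gives
\begin{align*}
\sum_{T\in\mathscr{T}_{h_n}}\big|a_T-b_T\big|^2\leqslant C\delta^2\leqslant C\rho\tilde\eta^2,
\end{align*}
with $C$ as in \Cref{prop:error_est}; the statement of \Cref{prop:error_est} itself is then the consequence $|\eta-\tilde\eta|\leqslant\big(\sum_{T\in\mathscr{T}_{h_n}}|a_T-b_T|^2\big)^{1/2}\leqslant\sqrt{C\rho}\,\tilde\eta$ of the reverse triangle inequality in $\ell^2$, which in particular yields $\eta\leqslant(1+\sqrt{C\rho})\tilde\eta$, i.e. $\tilde\eta^2\geqslant(1+\sqrt{C\rho})^{-2}\eta^2$.

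Next I would estimate the defect on the marked set. By the Cauchy--Schwarz inequality and $(x+y)^2\leqslant2(x^2+y^2)$,
\begin{align*}
\Big|\sum_{T\in\hat{\mathscr{T}}_{h_n}}\big(b_T^2-a_T^2\big)\Big|\leqslant\Big(\sum_{T\in\hat{\mathscr{T}}_{h_n}}|a_T-b_T|^2\Big)^{1/2}\Big(2\sum_{T\in\hat{\mathscr{T}}_{h_n}}\big(a_T^2+b_T^2\big)\Big)^{1/2}.
\end{align*}
Here the first factor is at most $\sqrt{C\rho}\,\tilde\eta$ by the previous step, while $\sum_{T\in\hat{\mathscr{T}}_{h_n}}a_T^2\leqslant\tilde\eta^2$ and $\sum_{T\in\hat{\mathscr{T}}_{h_n}}b_T^2\leqslant\eta^2\leqslant(1+\sqrt{C\rho})^2\tilde\eta^2$ make the second factor at most $\sqrt{2\big((1+\sqrt{C\rho})^2+1\big)}\,\tilde\eta=\sqrt{2(C\rho+2\sqrt{C\rho}+2)}\,\tilde\eta$; hence the defect is at most $\sqrt{2(C\rho+2\sqrt{C\rho}+2)C\rho}\,\tilde\eta^2$. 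Combining this with \cref{ine:dor_eta} in the form $\sum_{T\in\hat{\mathscr{T}}_{h_n}}a_T^2\geqslant\theta\tilde\eta^2$,
\begin{align*}
\sum_{T\in\hat{\mathscr{T}}_{h_n}}\eta_{h_n}^2(\mathcal{E}_{h_n}U,T)&=\sum_{T\in\hat{\mathscr{T}}_{h_n}}b_T^2\geqslant\sum_{T\in\hat{\mathscr{T}}_{h_n}}a_T^2-\Big|\sum_{T\in\hat{\mathscr{T}}_{h_n}}\big(b_T^2-a_T^2\big)\Big|\\
&\geqslant\Big(\theta-\sqrt{2(C\rho+2\sqrt{C\rho}+2)C\rho}\Big)\tilde\eta^2,
\end{align*}
which is strictly positive by \cref{ine:theta}. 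Substituting $\tilde\eta^2\geqslant(1+\sqrt{C\rho})^{-2}\eta^2$ then gives \cref{ine:dorrr} for every admissible $\tilde\theta$ (note that $\tilde\theta\leqslant\frac{\theta-\sqrt{2(C\rho+2\sqrt{C\rho}+2)C\rho}}{(1+\sqrt{C\rho})^2}<\theta<1$, so $\tilde\theta\in(0,1)$).

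The step I expect to be the main obstacle is the localization in the first paragraph, i.e. upgrading \Cref{prop:error_est} from the global bound $|\eta-\tilde\eta|^2\leqslant C\delta^2$ to $\sum_{T\in\mathscr{T}_{h_n}}|a_T-b_T|^2\leqslant C\delta^2$. This ought to be routine because the indicators are residual-based and hence genuinely local: $|a_T-b_T|$ is controlled by local norms on $T$ and $\partial T$ of the differences of the eigenfunction approximations (handled through the projections $\mathcal{P}^{(i)}$ introduced in Section 4.1, using $h_n\leqslant h_0\ll1$) and by the eigenvalue differences $|\lambda_{ij}^{h_n}-\lambda_{ij}^{(n)}|$, and summing over $T$ and over the indices $(i,j)$ reproduces exactly the quantity $\delta^2$; but carrying this out requires entering the proof of \Cref{prop:error_est} rather than quoting its statement. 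The remaining manipulations are the elementary $\ell^2$-triangle and Cauchy--Schwarz estimates above.
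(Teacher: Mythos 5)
Your proposal is correct and follows essentially the same route as the paper's proof: establish a localized version of \Cref{prop:error_est} bounding the $\ell^2$-sum of elementwise differences of the indicators, use it together with Cauchy--Schwarz and the assumption \cref{ine:approxxxx} to bound $\bigl|\sum_{T\in\hat{\mathscr{T}}_{h_n}}\bigl(\eta_{h_n}^2(\mathcal{E}_{h_n}U,T)-\tilde\eta_{h_n}^2(\tilde{\mathcal{E}}_nU,T)\bigr)\bigr|$ by $\sqrt{2(C\rho+2\sqrt{C\rho}+2)C\rho}\,\tilde\eta_{h_n}^2(\tilde{\mathcal{E}}_nU,\Omega)$, and then transfer the D\"orfler bound via $\eta_{h_n}^2(\mathcal{E}_{h_n}U,\Omega)\leqslant(1+\sqrt{C\rho})^2\tilde\eta_{h_n}^2(\tilde{\mathcal{E}}_nU,\Omega)$. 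The only cosmetic difference is that you aggregate the $(i,j)$ indices into the element-level quantities $a_T,b_T$ before applying Cauchy--Schwarz, whereas the paper keeps the per-$(i,j)$ local indicators separate and enlarges the marked-set sum to the full triangulation at the same point; both yield identical constants, and the localized estimate you flag as the one substantive step is indeed exactly what the paper obtains by reusing \cref{ine:func_veccccc}, \cref{ine:res_part}\creflastconjunction\cref{ine:jum_part} from the proof of \Cref{prop:error_est}.
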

	
	\subsection{Contraction property}
	\Cref{thm:dof} shows that if the ParO iterations reach a certain accuracy, then the fact that the error estimators for the ParO approximations satisfy the D\"orfler property implies that the estimators for the FE approximations satisfy the property as well. As a result, the error estimate for the adaptive ParO approximations can be transformed into the error estimate for the adaptive FE approximations. In our analysis, the following contraction property of adaptive FE approximations produced by \Cref{algo:pou_n_shifted} will be used (see Theorem 4.2 of \cite{dai2015convergence}).
	\begin{lemma}\label{prop:discre_con}
		 If $h_{0}\ll1$ and there exists $\hat{\theta}\in(0,1)$ such that
		 \begin{align*}
			\sum_{T \in \hat{\mathscr{T}}_{h_{n}}}\eta_{h_{n}}^{2}(\mathcal{E}_{h_{n}}U,T) \geqslant \hat{\theta} \eta_{h_{n}}^{2}(\mathcal{E}_{h_{n}}U,\Omega),\quad \forall n\geqslant0,
		\end{align*}
 then there exist constants $\gamma>0$ and $\alpha\in(0,1)$ that is independent of the mesh size $h_{n} (n\geqslant0)$ such that  
		\begin{align*}
			\left\Vert U-\mathcal{E}_{h_{n+1}}U\right\Vert_{a}^{2}+\gamma\eta_{h_{n+1}}^{2}(\mathcal{E}_{h_{n+1}}U,\Omega)\leqslant\alpha^{2}\left( \left\Vert U-\mathcal{E}_{h_{n}}U\right\Vert_{a}^{2}+\gamma\eta_{h_{n}}^{2}(\mathcal{E}_{h_{n}}U,\Omega)\right).
		\end{align*}
	\end{lemma}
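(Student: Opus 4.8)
The plan is to follow the by-now-standard contraction machinery for adaptive finite element methods (\cite{cascon2008quasi, dai2008convergence}), in the form adapted to clustered eigenvalue problems in \cite{dai2015convergence}. Three ingredients are needed: a quasi-orthogonality relation replacing the Pythagorean identity available in the source-problem case; an estimator reduction estimate between two consecutive meshes; and the global reliability bound $\left\Vert U-\mathcal{E}_{h_n}U\right\Vert_a^2\lesssim\eta_{h_n}^2(\mathcal{E}_{h_n}U,\Omega)$ established in \cite{dai2015convergence}. The Dörfler marking hypothesis is what couples the estimator on the marked cells $\hat{\mathscr{T}}_{h_n}$ to the total estimator and produces the strict contraction.

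\emph{Step 1 (quasi-orthogonality).} Since $V^{h_n}\subset V^{h_{n+1}}$, Galerkin orthogonality gives $a(\mathcal{E}_{h_{n+1}}U-\mathcal{E}_{h_n}U,v)=0$ for $v$ in the pertinent subspace; but because $\mathcal{E}_{h_n}U$ consists of discrete eigenfunctions rather than a Galerkin projection of $U$, the cross term $a(U-\mathcal{E}_{h_{n+1}}U,\mathcal{E}_{h_{n+1}}U-\mathcal{E}_{h_n}U)$ does not vanish. Using \Cref{thm:cluster_eigenfunc} and \Cref{thm:cluster_eigen}, and the fact that $\varepsilon_n:=\operatorname{dist}_a^2(\bigoplus_{i=1}^q M(\lambda_i),V^{h_n})$ is uniformly small when $h_0\ll1$, I would bound this cross term and obtain, with a constant $\Lambda$ depending only on $\lambda_1,\ldots,\lambda_q$,
\begin{align*}
\left\Vert U-\mathcal{E}_{h_{n+1}}U\right\Vert_a^2 + \left\Vert \mathcal{E}_{h_{n+1}}U-\mathcal{E}_{h_n}U\right\Vert_a^2 \leqslant (1+\Lambda\varepsilon_0)\left\Vert U-\mathcal{E}_{h_n}U\right\Vert_a^2 .
\end{align*}

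\emph{Steps 2 and 3 (estimator reduction and combination).} Using the local Lipschitz dependence of $\eta_{h_{n+1}}(\cdot,T)$ on its argument, an inverse estimate on the refined elements, and the fact that every $T\in\hat{\mathscr{T}}_{h_n}$ is bisected at least $\ell\geqslant1$ times so that its mesh-size weight shrinks by a fixed factor, one obtains for arbitrary $\delta>0$ and suitable $\mu\in(0,1)$, $C_1>0$,
\begin{align*}
\eta_{h_{n+1}}^2(\mathcal{E}_{h_{n+1}}U,\Omega) &\leqslant (1+\delta)\eta_{h_n}^2(\mathcal{E}_{h_n}U,\Omega) - (1+\delta)\mu\sum_{T\in\hat{\mathscr{T}}_{h_n}}\eta_{h_n}^2(\mathcal{E}_{h_n}U,T) \\
&\quad + (1+\delta^{-1})C_1\left\Vert\mathcal{E}_{h_{n+1}}U-\mathcal{E}_{h_n}U\right\Vert_a^2 .
\end{align*}
Multiplying this by a small $\gamma>0$, adding it to Step 1, and replacing the marked sum by $\hat\theta\,\eta_{h_n}^2(\mathcal{E}_{h_n}U,\Omega)$ via the Dörfler hypothesis, the term $\gamma(1+\delta^{-1})C_1\left\Vert\mathcal{E}_{h_{n+1}}U-\mathcal{E}_{h_n}U\right\Vert_a^2$ is absorbed by the corresponding term from Step 1 once $\gamma(1+\delta^{-1})C_1\leqslant1$; spending a fixed fraction of the estimator gain on the error term through global reliability yields
\begin{align*}
\left\Vert U-\mathcal{E}_{h_{n+1}}U\right\Vert_a^2+\gamma\eta_{h_{n+1}}^2(\mathcal{E}_{h_{n+1}}U,\Omega)\leqslant\alpha^2\left(\left\Vert U-\mathcal{E}_{h_n}U\right\Vert_a^2+\gamma\eta_{h_n}^2(\mathcal{E}_{h_n}U,\Omega)\right)
\end{align*}
with some $\alpha\in(0,1)$ independent of $n$, provided $h_0$ is small enough that $\Lambda\varepsilon_0$ is dominated by the marking gain $\gamma(1+\delta)\mu\hat\theta$.

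The main obstacle is Step 1: in contrast to the source problem, $\mathcal{E}_{h_{n+1}}U-\mathcal{E}_{h_n}U$ is not $a$-orthogonal to $U-\mathcal{E}_{h_{n+1}}U$, and to keep the perturbation constant $\Lambda$ independent of the refinement level $n$ one must use the eigenvalue gaps together with the uniform smallness of $\operatorname{dist}_a(\bigoplus_{i=1}^q M(\lambda_i),V^{h_n})$ --- which is precisely where the hypothesis $h_0\ll1$ enters --- and control carefully the coupling between the different eigenspaces induced by the direct sum $\mathcal{E}_{h_n}$.
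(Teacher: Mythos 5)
The paper does not supply its own proof of this lemma; it invokes Theorem 4.2 of \cite{dai2015convergence} wholesale, and your outline (quasi-orthogonality for spectral projections replacing the exact Pythagorean identity, estimator reduction on bisected meshes, absorption of the marked-set gain via the D\"orfler hypothesis together with global reliability) is precisely the argument that reference uses to obtain the contraction. Your proposal is therefore correct and takes essentially the same route as the paper's cited proof, including the correct identification of the lost Galerkin orthogonality for $\mathcal{E}_{h_n}$ as the step requiring $h_0\ll 1$.
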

	
	The following proposition tells the contraction property of the adaptive ParO approximations produced by \Cref{algo:pou_n_shifted}. 
	\begin{proposition}\label{thm:iter_con}
		Suppose $h_{0}\ll1$ and $\rho\ll1$ such that for $n\geqslant0$, 
		\begin{align}
			\sum_{i=1}^{q}\left(\operatorname{dist}^{2}_{a}\left(M_{h_{n}}(\lambda_{i}), \tilde{M}_{n}(\lambda_{i})\right)+\sum_{j=1}^{d_{i}}\left|\lambda_{ij}^{h_{n}}-\lambda_{ij}^{(n)}\right|^{2}\right)\leqslant&\rho\tilde{\eta}_{h_{n}}^{2}\left(\tilde{\mathcal{E}}_{n}U, \Omega\right)\label{ine:assu1},\\
			\sum_{i=1}^{q}\left(\operatorname{dist}^{2}_{a}\left(M_{h_{n+1}}(\lambda_{i}), \tilde{M}_{n+1}(\lambda_{i})\right)+\sum_{j=1}^{d_{i}}\left|\lambda_{ij}^{h_{n+1}}-\lambda_{ij}^{(n+1)}\right|^{2}\right)\leqslant&\rho\tilde{\eta}_{h_{n}}^{2}\left(\tilde{\mathcal{E}}_{n}U, \Omega\right)\label{ine:assu2}.
		\end{align}
		Then there exists the constant $\beta\in(0,1)$ that is independent of the mesh size $h_{n} (n\geqslant0)$ such that for any $n\geqslant0$,
		\begin{align*}
			\sum_{i=1}^{q}\left(\operatorname{dist}_{a}^{2}\left(M(\lambda_{i}), \tilde{M}_{n}(\lambda_{i})\right)+\sum_{j=1}^{d_{i}}\left|\lambda_{ij}^{(n)}-\lambda_{i}\right|\right)\lesssim \beta^{2n}.
		\end{align*}
	\end{proposition}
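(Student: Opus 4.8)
The plan is to combine the contraction property of the adaptive FE approximations (\Cref{prop:discre_con}) with the estimate relating the ParO and FE error estimators (\Cref{prop:error_est}), using the assumptions \cref{ine:assu1} and \cref{ine:assu2} to absorb the ParO-to-FE discrepancies into small perturbations. The central quantity to track is the combined error-plus-estimator functional
\begin{align*}
	\mathcal{G}_{n}:=\left\Vert U-\mathcal{E}_{h_{n}}U\right\Vert_{a}^{2}+\gamma\eta_{h_{n}}^{2}(\mathcal{E}_{h_{n}}U,\Omega),
\end{align*}
for which \Cref{prop:discre_con} gives $\mathcal{G}_{n+1}\leqslant\alpha^{2}\mathcal{G}_{n}$, hence $\mathcal{G}_{n}\lesssim\alpha^{2n}$. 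The first step is to verify that the hypothesis of \Cref{prop:discre_con} is available: by \Cref{prop:eqqqqqq} the computable and theoretical estimators are equivalent, and combined with the Dörfler marking \cref{ine:dor} used in \Cref{algo:pou_n_shifted} together with \Cref{thm:dof} (whose smallness condition \cref{ine:theta} is met because $\rho\ll1$), the theoretical estimators $\eta_{h_{n}}(\mathcal{E}_{h_{n}}U,T)$ satisfy the Dörfler property with some fixed $\hat{\theta}\in(0,1)$ independent of $n$. Thus \Cref{prop:discre_con} applies and yields geometric decay of $\mathcal{G}_{n}$, and in particular $\eta_{h_{n}}^{2}(\mathcal{E}_{h_{n}}U,\Omega)\lesssim\alpha^{2n}$ and $\left\Vert U-\mathcal{E}_{h_{n}}U\right\Vert_{a}^{2}\lesssim\alpha^{2n}$.

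The second step is to pass from the FE quantities to the ParO quantities. From assumption \cref{ine:assu1} and \Cref{prop:error_est} (or its strengthened form in the Remark), $\tilde{\eta}_{h_{n}}^{2}(\tilde{\mathcal{E}}_{n}U,\Omega)$ and $\eta_{h_{n}}^{2}(\mathcal{E}_{h_{n}}U,\Omega)$ differ by at most a constant multiple of $\rho\,\tilde{\eta}_{h_{n}}^{2}(\tilde{\mathcal{E}}_{n}U,\Omega)$; since $\rho\ll1$ this gives a two-sided equivalence $\tilde{\eta}_{h_{n}}^{2}(\tilde{\mathcal{E}}_{n}U,\Omega)\cong\eta_{h_{n}}^{2}(\mathcal{E}_{h_{n}}U,\Omega)$ with constants independent of $n$. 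Feeding this back into \cref{ine:assu1} bounds the left-hand side of \cref{ine:assu1}, namely $\sum_{i}\bigl(\operatorname{dist}_{a}^{2}(M_{h_{n}}(\lambda_{i}),\tilde{M}_{n}(\lambda_{i}))+\sum_{j}|\lambda_{ij}^{h_{n}}-\lambda_{ij}^{(n)}|^{2}\bigr)$, by $C\rho\,\eta_{h_{n}}^{2}(\mathcal{E}_{h_{n}}U,\Omega)\lesssim\alpha^{2n}$. Next, a triangle-type inequality for the subspace distance, $\operatorname{dist}_{a}(M(\lambda_{i}),\tilde{M}_{n}(\lambda_{i}))\lesssim\operatorname{dist}_{a}(M(\lambda_{i}),M_{h_{n}}(\lambda_{i}))+\operatorname{dist}_{a}(M_{h_{n}}(\lambda_{i}),\tilde{M}_{n}(\lambda_{i}))$ (valid once both distances are $<1$, guaranteed for $h_{0}\ll1$ and $\rho\ll1$), together with \Cref{thm:cluster_eigenfunc} which controls $\operatorname{dist}_{a}(M(\lambda_{i}),M_{h_{n}}(\lambda_{i}))$ by $\left\Vert U-\mathcal{E}_{h_{n}}U\right\Vert_{a}$, gives $\sum_{i}\operatorname{dist}_{a}^{2}(M(\lambda_{i}),\tilde{M}_{n}(\lambda_{i}))\lesssim\alpha^{2n}$. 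For the eigenvalue errors, split $|\lambda_{ij}^{(n)}-\lambda_{i}|\leqslant|\lambda_{ij}^{(n)}-\lambda_{ij}^{h_{n}}|+|\lambda_{ij}^{h_{n}}-\lambda_{i}|$; the first term is bounded via \cref{ine:assu1} (already shown $\lesssim\alpha^{2n}$, and note $|\lambda_{ij}^{h_{n}}-\lambda_{ij}^{(n)}|\lesssim|\lambda_{ij}^{h_{n}}-\lambda_{ij}^{(n)}|^{2}+1$ is too lossy, so instead use that the squared term is $\lesssim\alpha^{2n}$ hence the term itself is $\lesssim\alpha^{n}$), and the second via \Cref{thm:cluster_eigen}, which bounds $\lambda_{ij}^{h_{n}}-\lambda_{i}$ by $\operatorname{dist}_{a}^{2}(\bigoplus_{i}M(\lambda_{i}),V^{h_{n}})\lesssim\left\Vert U-\mathcal{E}_{h_{n}}U\right\Vert_{a}^{2}\lesssim\alpha^{2n}$. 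Taking $\beta:=\sqrt{\alpha}\in(0,1)$ (to accommodate the non-squared $\alpha^{n}$ term from the eigenvalue split) and absorbing constants then yields the claimed bound $\sum_{i}\bigl(\operatorname{dist}_{a}^{2}(M(\lambda_{i}),\tilde{M}_{n}(\lambda_{i}))+\sum_{j}|\lambda_{ij}^{(n)}-\lambda_{i}|\bigr)\lesssim\beta^{2n}$.

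I expect the main obstacle to be the self-referential nature of assumption \cref{ine:assu1}: the right-hand side $\tilde{\eta}_{h_{n}}^{2}(\tilde{\mathcal{E}}_{n}U,\Omega)$ itself depends on the ParO approximations being controlled, so one must verify that the equivalence $\tilde{\eta}_{h_{n}}^{2}(\tilde{\mathcal{E}}_{n}U,\Omega)\cong\eta_{h_{n}}^{2}(\mathcal{E}_{h_{n}}U,\Omega)$ is not circular — this is handled precisely by \Cref{prop:error_est}, whose bound has the ParO discrepancy multiplied by the (small) $\rho$, allowing a standard absorption argument. A secondary subtlety is ensuring all the "$h_{0}\ll1$" and "$\rho\ll1$" smallness thresholds (for the invertibility of $\mathcal{P}^{(i)}$ and $\tilde{E}_{n}^{(i)}$, for \Cref{prop:eqqqqqq}, for \Cref{thm:dof}, and for the distance triangle inequality) can be chosen uniformly in $n$; this follows because the adaptive refinement only decreases mesh size, so $h_{n}\leqslant h_{0}$ throughout, and the constants in \Cref{prop:error_est} and \Cref{prop:discre_con} are mesh-independent by hypothesis. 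The bookkeeping of the non-squared eigenvalue term — which forces $\beta=\sqrt{\alpha}$ rather than $\beta=\alpha$ — is routine once the squared quantities are shown to decay like $\alpha^{2n}$.
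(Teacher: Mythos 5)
Your proposal is correct, but it takes a genuinely different route from the paper. The paper does not first establish decay of the FE functional $\mathcal{G}_n$ and then translate to ParO quantities; instead it directly proves a geometric contraction of the \emph{ParO} functional $\left\Vert U-\tilde{\mathcal{E}}_{n}U\right\Vert_{a}^{2}+\gamma\tilde{\eta}_{h_{n}}^{2}(\tilde{\mathcal{E}}_{n}U,\Omega)$, by inserting $\mathcal{E}_{h_{n}}U$ and $\eta_{h_n}$, applying Young's inequality with a free parameter $\delta$, and absorbing the ParO-to-FE discrepancy terms (bounded via \Cref{prop:error_est}, \cref{ine:assu1}, \cref{ine:assu2} and a bound analogous to your use of $\mathcal{P}^{(i)}$) into the contraction factor. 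This $\delta$-perturbation bookkeeping is precisely what produces a rate $\beta$ close to $\alpha$. Once that contraction is in hand, the paper bounds $\operatorname{dist}_a(\bigoplus_i M(\lambda_i),\bigoplus_i\tilde{M}_n(\lambda_i))\lesssim\beta^n$ via \Cref{lem:dissyysybs} and then applies \Cref{thm:cluster_eigen} \emph{to the projected eigenvalue problem} \cref{eq:pro_ei_pro} to obtain the eigenvalue error bound $|\lambda_{ij}^{(n)}-\lambda_i|\lesssim\beta^{2n}$, i.e.\ quadratic in the subspace distance.

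Your route is arguably cleaner: you invoke \Cref{prop:discre_con} to get $\mathcal{G}_n\lesssim\alpha^{2n}$ directly, avoiding the $\delta$-juggling, and then use \cref{ine:assu1} and the estimator equivalence (from \Cref{prop:error_est} plus an absorption argument, exactly as in the proof of \Cref{thm:dof}) to pull the ParO quantities down at the same rate. The triangle-inequality passage from $\operatorname{dist}_a(M_{h_n}(\lambda_i),\tilde M_n(\lambda_i))$ to $\operatorname{dist}_a(M(\lambda_i),\tilde M_n(\lambda_i))$ is fine — the paper uses the same device in \Cref{prop:paro_conver_given}. The price you pay is the handling of the eigenvalue error: by splitting $|\lambda_{ij}^{(n)}-\lambda_i|\leqslant|\lambda_{ij}^{(n)}-\lambda_{ij}^{h_n}|+|\lambda_{ij}^{h_n}-\lambda_i|$ and using only the square-root of the assumed control on $|\lambda_{ij}^{(n)}-\lambda_{ij}^{h_n}|^2$, you get $\alpha^n$ rather than $\alpha^{2n}$, forcing $\beta=\sqrt\alpha$. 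Since the statement only asserts existence of \emph{some} $\beta\in(0,1)$, this is still a valid proof, but you lose half the exponent. You could recover the sharper rate within your own framework by noting that $\{\lambda_{ij}^{(n)}\}$ are Rayleigh--Ritz eigenvalues from \cref{eq:pro_ei_pro} over $\tilde X_{n,m_n}=\bigoplus_i\tilde M_n(\lambda_i)$, so \Cref{thm:cluster_eigen} applies directly to bound $|\lambda_{ij}^{(n)}-\lambda_i|$ by $\operatorname{dist}_a^2(\bigoplus_iM(\lambda_i),\bigoplus_i\tilde M_n(\lambda_i))\lesssim\alpha^{2n}$, avoiding the lossy linear split entirely. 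One further small caveat: passing from the algorithm's D\"orfler marking on the computable $\tilde\eta_{h_n}(U^{(n)},\cdot)$ to the theoretical $\eta_{h_n}(\mathcal{E}_{h_n}U,\cdot)$ requires the intermediate step via the theoretical ParO estimator, i.e.\ you should cite \Cref{prop:eqqqqqq}'s corollary and then \Cref{thm:dof} in sequence, as the global equivalence of \Cref{prop:eqqqqqq} alone does not transfer the elementwise D\"orfler property; your wording gestures at this chain but does not make it fully explicit.
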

	
	\begin{proof}
	Since $\tilde{E}_{n}^{(i)}:=\mathcal{P}^{(i)}\circ E_{h_{n}}^{(i)}$ and $\mathcal{P}^{(i)}$ is an orthogonal projection with respect to $a(\cdot, \cdot)$ and $h_{n}\leqslant h_{0}\ll1$, we see that
	\begin{equation}
	   \begin{aligned}\label{ine:maxnum}
			&\left\Vert\mathcal{E}_{h_{n+1}}U-\tilde{\mathcal{E}}_{n+1}U\right\Vert_{a}^{2}\\\leqslant&\sum_{i=1}^{q}\sum_{j=1}^{d_{i}}\left\Vert E_{h_{n+1}}^{(i)}u_{ij}-\tilde{E}_{n+1}^{(i)}u_{ij} \right\Vert_{a}^{2}=\sum_{i=1}^{q}\sum_{j=1}^{d_{i}}\left\Vert \left(\mathrm{I}-\mathcal{P}^{(i)}\right)E_{h_{n+1}}^{(i)}u_{ij}\right\Vert_{a}^{2}\\\leqslant&\sum_{i=1}^{q}\sum_{j=1}^{d_{i}}\operatorname{dist}^{2}_{a}\left(E_{h_{n+1}}^{(i)}u_{ij}, \tilde{M}_{n+1}(\lambda_{i})\right)\leqslant D\sum_{i=1}^{q}\operatorname{dist}^{2}_{a}\left(M_{h_{n+1}}(\lambda_{i}), \tilde{M}_{n+1}(\lambda_{i})\right),
		\end{aligned} 
	\end{equation}
		where $D:=\max_{1\leqslant i\leqslant q}d_{i}$ is the maximum  multiplicity of the eigenvalues we look for.
	
	We see that \cref{ine:theta} holds due to $\rho\ll1$. Then by Young's inequality, \Cref{prop:error_est}, \Cref{prop:discre_con}, \cref{ine:assu2} and \cref{ine:maxnum}, we have that for any $\delta>0$, 
		\begin{align*}
			&\left\Vert U-\tilde{\mathcal{E}}_{n+1}U\right\Vert_{a}^{2}+\gamma\tilde{\eta}_{h_{n+1}}^{2}(\tilde{\mathcal{E}}_{n+1}U,\Omega)\leqslant(1+\delta)\left( \left\Vert U-\mathcal{E}_{h_{n+1}}U\right\Vert_{a}^{2}+\gamma\eta_{h_{n+1}}^{2}(\mathcal{E}_{h_{n+1}}U,\Omega)\right)\\&+\left(1+\frac{1}{\delta}\right)\left(\gamma\left|\tilde{\eta}_{h_{n+1}}(\tilde{\mathcal{E}}_{n+1}U,\Omega)-\eta_{h_{n+1}}(\mathcal{E}_{h_{n+1}}U,\Omega)\right|^{2}+\left\Vert\mathcal{E}_{h_{n+1}}U-\tilde{\mathcal{E}}_{n+1}U\right\Vert_{a}^{2}\right)\\\leqslant&(1+\delta)\left( \left\Vert U-\mathcal{E}_{h_{n+1}}U\right\Vert_{a}^{2}+\gamma\eta_{h_{n+1}}^{2}(\mathcal{E}_{h_{n+1}}U,\Omega)\right)\\+&\left(1+\frac{1}{\delta}\right)\left(\gamma C\sum_{i=1}^{q}\sum_{j=1}^{d_{i}}\left|\lambda_{ij}^{h_{n+1}}-\lambda_{ij}^{(n+1)}\right|^{2}+\left(D+\gamma C\right)\sum_{i=1}^{q}\operatorname{dist}^{2}_{a}\left(M_{h_{n+1}}(\lambda_{i}), \tilde{M}_{n+1}(\lambda_{i})\right)\right)\\\leqslant&(1+\delta)\alpha^{2}\left( \left\Vert U-\mathcal{E}_{h_{n}}U\right\Vert_{a}^{2}+\gamma\eta_{h_{n}}^{2}(\mathcal{E}_{h_{n}}U,\Omega)\right)+\left(1+\frac{1}{\delta}\right)\left(D+\gamma C\right)\rho\tilde{\eta}_{h_{n}}^{2}\left(\tilde{\mathcal{E}}_{n}U, \Omega\right).
		\end{align*}
		In further, by applying the similar analysis to $\left\Vert U-\mathcal{E}_{h_{n}}U\right\Vert_{a}^{2}+\gamma\eta_{h_{n}}^{2}(\mathcal{E}_{h_{n}}U,\Omega)$, we have
		\begin{align*}
			&\left\Vert U-\tilde{\mathcal{E}}_{n+1}U\right\Vert_{a}^{2}+\gamma\tilde{\eta}_{h_{n+1}}^{2}(\tilde{\mathcal{E}}_{n+1}U,\Omega)\leqslant(1+\delta)^{2}\alpha^{2}\left( \left\Vert U-\tilde{\mathcal{E}}_{n}U\right\Vert_{a}^{2}+\gamma\tilde{\eta}_{h_{n}}^{2}(\tilde{\mathcal{E}}_{n}U,\Omega)\right)\\&+(1+\delta)\left(1+\frac{1}{\delta}\right)\alpha^{2}\left(\gamma\left|\tilde{\eta}_{h_{n}}(\tilde{\mathcal{E}}_{n}U,\Omega)-\eta_{h_{n}}(\mathcal{E}_{h_{n}}U,\Omega)\right|^{2}+\left\Vert\mathcal{E}_{h_{n}}U-\tilde{\mathcal{E}}_{n}U\right\Vert_{a}^{2}\right)\\&+
			\left(1+\frac{1}{\delta}\right)\left(D+\gamma C\right)\rho\tilde{\eta}_{h_{n}}^{2}\left(\tilde{\mathcal{E}}_{n}U, \Omega\right)\\\leqslant&(1+\delta)^{2}\alpha^{2}\left( \left\Vert U-\tilde{\mathcal{E}}_{n}U\right\Vert_{a}^{2}+\gamma\tilde{\eta}_{h_{n}}^{2}(\tilde{\mathcal{E}}_{n}U,\Omega)\right)+\left(1+\frac{1}{\delta}\right)\left(D+\gamma C\right)\rho\tilde{\eta}_{h_{n}}^{2}\left(\tilde{\mathcal{E}}_{n}U, \Omega\right)\\&+(1+\delta)\left(1+\frac{1}{\delta}\right)\alpha^{2}\left(\gamma C\sum_{i=1}^{q}\sum_{j=1}^{d_{i}}\left|\lambda_{ij}^{h_{n}}-\lambda_{ij}^{(n)}\right|^{2}+\left(D+\gamma C\right) \sum_{i=1}^{q}\operatorname{dist}^{2}_{a}\left(M_{h_{n}}(\lambda_{i}), \tilde{M}_{n}(\lambda_{i})\right)\right).
		\end{align*}
		Consequently, we obtain from \cref{ine:assu1} that 
		\begin{align*}
			&\left\Vert U-\tilde{\mathcal{E}}_{n+1}U\right\Vert_{a}^{2}+\gamma\tilde{\eta}_{h_{n+1}}^{2}(\tilde{\mathcal{E}}_{n+1}U,\Omega)\\\leqslant&(1+\delta)^{2}\alpha^{2} \left\Vert U-\tilde{\mathcal{E}}_{n}U\right\Vert_{a}^{2}+\left((1+\delta)^{2}\alpha^{2}\gamma+\left(1+(1+\delta)\alpha^{2}\right)\left(1+\frac{1}{\delta}\right)(D+\gamma C)\rho\right)\tilde{\eta}_{h_{n}}^{2}(\tilde{\mathcal{E}}_{n}U,\Omega).
		\end{align*}
		Since $\rho\ll1$ and $\alpha \in (0, 1)$, we can choose $\delta>0$ satisfying
		\begin{align*}
			(1+\delta)^{2}\alpha^{2}\gamma+\left(1+(1+\delta)\right)\alpha^{2}\left(D+\frac{1}{\delta}\right)(1+\gamma C)\rho<\gamma.
		\end{align*}
		Then
		\begin{align*}
			\beta:=\sqrt{(1+\delta)^{2}\alpha^{2}+\frac{\left(1+(1+\delta)\right)\alpha^{2}\left(D+1/\delta\right)(1+\gamma C)\rho}{\gamma}}\in (0,1).
		\end{align*}
		Therefore, there holds for any two consecutive iterations $n$ and $n+1$ that  
		\begin{align*}
			\left\Vert U-\tilde{\mathcal{E}}_{n+1}U\right\Vert_{a}^{2}+\gamma\tilde{\eta}_{h_{n+1}}^{2}(\tilde{\mathcal{E}}_{n+1}U,\Omega)\leqslant\beta^{2}\left( \left\Vert U-\tilde{\mathcal{E}}_{n}U\right\Vert_{a}^{2}+\gamma\tilde{\eta}_{h_{n}}^{2}(\tilde{\mathcal{E}}_{n}U,\Omega)\right).
		\end{align*}
		Note that $\beta\in(0,1)$ is independent of the mesh size $h_{n} (n\geqslant0)$.  
		
		For any $v_{i}\in M(\lambda_{i})$ and  $\{u_{ij}\}_{j=1}^{d_{i}}$ as an orthonormal basis of $M(\lambda_{i})$, 
		we obtain from the fact that 
		\begin{align*}
		    a(u_{ik}, u_{il})=\lambda_{i}b(u_{ik}, u_{il})=0,\quad k\neq l
		\end{align*}
		and \Cref{lem:dissyysybs} that
		\begin{align*}
			\sum_{i=1}^{q} \operatorname{dist}_{a}^{2}\left(v_{i}, \tilde{M}_{n}(\lambda_{i})\right)\leqslant&\sum_{i=1}^{q}\sum_{j=1}^{d_{i}} \operatorname{dist}_{a}^{2}\left(u_{ij}, \tilde{M}_{n}(\lambda_{i})\right)\leqslant\sum_{i=1}^{q}\sum_{j=1}^{d_{i}}\left\Vert u_{ij}-\tilde{E}_{n}^{(i)}u_{ij} \right\Vert_{a}^{2}\\\lesssim&\left\Vert U-\tilde{\mathcal{E}}_{n+1}U\right\Vert_{a}^{2}+\gamma\tilde{\eta}_{h_{n+1}}^{2}(\tilde{\mathcal{E}}_{n+1}U,\Omega)\lesssim\beta^{2n}.
		\end{align*}
		Due to the arbitrariness of $\{v_{i}\}_{i=1}^q$, we have 
		\begin{align*}
			\sum_{i=1}^{q} \operatorname{dist}_{a}^{2}\left(M(\lambda_{i}), \tilde{M}_{n}(\lambda_{i})\right)\lesssim\beta^{2n}.
		\end{align*}
		
		For $\psi\in \bigoplus_{i=1}^{q}M(\lambda_{i})$, by \Cref{lem:dissyysybs} again, there holds
		\begin{equation*}
			\begin{aligned}
				&\operatorname{dist}_{a}\left(\psi, \bigoplus_{i=1}^{q}\tilde{M}_{n}(\lambda_{i})\right)\leqslant\sqrt{\sum_{i=1}^{q}\sum_{j=1}^{d_{i}}\operatorname{dist}_{a}^{2}\left(u_{ij}, \bigoplus_{i=1}^{q}\tilde{M}_{n}(\lambda_{i})\right)}\\\leqslant&\sqrt{\sum_{i=1}^{q}\sum_{j=1}^{d_{i}}\operatorname{dist}_{a}^{2}\left(M(\lambda_{i}), \bigoplus_{i=1}^{q}\tilde{M}_{n}(\lambda_{i})\right)}\leqslant\sqrt{\sum_{i=1}^{q}\sum_{j=1}^{d_{i}}\operatorname{dist}_{a}^{2}\left(M(\lambda_{i}), \tilde{M}_{n}(\lambda_{i})\right)}\lesssim\beta^{n},
			\end{aligned} 
		\end{equation*}
		which implies that 
		\begin{align}\label{ine:piece_to_cluster}
			\operatorname{dist}_{a}\left(\bigoplus_{i=1}^{q}M(\lambda_{i}), \bigoplus_{i=1}^{q}\tilde{M}_{n}(\lambda_{i})\right)\lesssim\beta^{n}.
		\end{align}
		Hence, it follows from $h_{0}\ll1$, \cref{ine:piece_to_cluster} and \Cref{thm:cluster_eigen} applied to \cref{eq:pro_ei_pro} that 
		\begin{align*}
			\left|\lambda_{ij}^{(n)}-\lambda_{ij}\right|\lesssim\beta^{2n},\quad \forall (1,1)\leqslant(i,j)\leqslant(q,d_{q}), 
		\end{align*}
		which completes the proof.
	\end{proof}
	
	\Cref{prop:eqqqqqq} and \Cref{prop:error_est} tell the relationships among different error estimators, that is,
	\begin{align*}
	    \tilde{\eta}_{h_{n}}^{2}\left(U^{(n)}, \Omega\right)\cong\tilde{\eta}_{h_{n}}^{2}\left(\tilde{\mathcal{E}}_{n}U, \Omega\right)\approx\eta_{h_{n}}^{2}\left(\mathcal{E}_{h_{n}}U, \Omega\right)\cong\eta_{h_{n}}^{2}\left(U^{h_{n}}, \Omega\right).
	\end{align*}
	We see that to satisfy the conditions \cref{ine:assu1} and \cref{ine:assu2} and to ensure
	the convergence of \Cref{algo:pou_n_shifted}, we only need to match the errors of approximations produced by the ParO iterations to the discretization accuracy. When the discretization is not accurate enough, we do not need to do the ParO iterations too many times. We mention in \cite{dai2025numerical} that during the implementation process of the ParO iterations, we are able to obtain quasi-orthogonal approximations for eigenfunctions. The ParO iterations utilizes the quasi-orthogonal approximations, for which the computational cost is lower, to obtain orthogonal approximations. In the following subsections, we will show that the errors of the approximations produced by the ParO iterations can be matched to the FE discretization accuracy by the quasi-orthogonality.
	
	\subsection{Quasi-orthogonality}
	To analyze the convergence of \Cref{algo:pou_n_shifted}, we recall the two existing results, which involve the quasi-orthogonality \cite{dai2025numerical}.
	
	The following lemma tells the approximation property of orthonormal bases by the distance from one subspace to another (see Proposition 3.4 in \cite{dai2025numerical}). 
	\begin{lemma}\label{prop:subspace_angle}
		Given two subspaces $X, Y\subset H_{0}^{1}(\Omega)$ with $ \operatorname{dim}(X)=\operatorname{dim}(Y)=n$. If $ \operatorname{dist}_{a}(X, Y)<1,$
		then for any orthonormal basis $\{x_{j}\}_{j=1}^{n}$ of $X$ with $a(x_{i}, x_{j})=\delta_{ij}$, there exists an orthonormal basis $\{y_{j}\}_{j=1}^{n}$ of $Y$ with $a(y_{i}, y_{j})=\delta_{ij}$ satisfying
		\begin{align*}
			\operatorname{dist}_{a}(x_{j}, y_{j})\leqslant(1+\sqrt{n})\sqrt{2-2\sqrt{1-\operatorname{dist}^{2}_{a}(X, Y)}},\quad j=1,2,\ldots,n.
		\end{align*}
	\end{lemma}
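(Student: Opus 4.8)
The plan is to work in the Hilbert space $\bigl(H_0^1(\Omega),a(\cdot,\cdot)\bigr)$ and to exploit the canonical (principal angle) description of the pair $(X,Y)$. First I would record the elementary identity $\operatorname{dist}_a(u,Y)=\|(\mathrm I-P)u\|_a$, where $P$ denotes the $a$-orthogonal projection onto $Y$; hence $\delta:=\operatorname{dist}_a(X,Y)=\sup_{u\in X,\ \|u\|_a=1}\|(\mathrm I-P)u\|_a$, and for any $a$-unit $u\in X$ one gets $\|Pu\|_a^2=1-\|(\mathrm I-P)u\|_a^2\geqslant 1-\delta^2>0$. Thus $P|_X:X\to Y$ is injective, and since $\dim X=\dim Y=n<\infty$ it is a bijection (this is one place the hypothesis $\delta<1$ is used).

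Next I would invoke the canonical decomposition of two $n$-dimensional subspaces: there exist $a$-orthonormal bases $\{p_j\}_{j=1}^n$ of $X$ and $\{q_j\}_{j=1}^n$ of $Y$, together with numbers $c_j=\cos\theta_j\in(0,1]$, such that $a(p_i,q_j)=c_i\delta_{ij}$ and $\sin\theta_n=\operatorname{dist}_a(X,Y)=\operatorname{dist}_a(Y,X)=\delta$ (equality of dimensions gives the symmetry of $\operatorname{dist}_a$, and $\delta<1$ gives $c_j>0$). Consequently, for every $j$,
\begin{align*}
\|p_j-q_j\|_a^2=2-2c_j\leqslant 2-2\cos\theta_n=2-2\sqrt{1-\delta^2}.
\end{align*}

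Now, given the prescribed $a$-orthonormal basis $\{x_j\}_{j=1}^n$ of $X$, I would write $x_j=\sum_{k=1}^n Q_{kj}p_k$ with $Q=(Q_{kj})$ an orthogonal matrix (both bases being $a$-orthonormal in $X$) and set $y_j:=\sum_{k=1}^n Q_{kj}q_k$; then $\{y_j\}$ is an $a$-orthonormal basis of $Y$, since $a(y_i,y_j)=(Q^{\top}Q)_{ij}=\delta_{ij}$. Using $\operatorname{dist}_a(x_j,y_j)\leqslant\|x_j-y_j\|_a$ (both vectors being $a$-unit), the triangle inequality, and the Cauchy--Schwarz inequality on the $j$-th column of $Q$, I would conclude
\begin{align*}
\operatorname{dist}_a(x_j,y_j)\leqslant\Bigl\|\sum_{k=1}^n Q_{kj}(p_k-q_k)\Bigr\|_a\leqslant\sum_{k=1}^n|Q_{kj}|\,\|p_k-q_k\|_a\leqslant\sqrt{n}\,\sqrt{2-2\sqrt{1-\delta^2}},
\end{align*}
which is in fact slightly stronger than the stated bound with the factor $1+\sqrt n$. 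An equivalent, more hands-on route that avoids the abstract decomposition is to set $z_j:=Px_j$, note that the Gram matrix $G=(a(z_i,z_j))$ equals $\mathrm I-E$ with $E$ the Gram matrix of $\{(\mathrm I-P)x_j\}$ and $\|E\|_2\leqslant\delta^2$, then orthonormalize symmetrically via $y_j:=\sum_k(G^{-1/2})_{kj}z_k$, and bound $\|x_j-y_j\|_a\leqslant\|x_j-z_j\|_a+\|z_j-y_j\|_a\leqslant\delta+\|G^{1/2}-\mathrm I\|_2\leqslant\delta+\bigl(1-\sqrt{1-\delta^2}\bigr)$; this also dominates the claimed bound since $\delta\leqslant\sqrt{2-2\sqrt{1-\delta^2}}$ and $1-\sqrt{1-\delta^2}\leqslant\sqrt{2-2\sqrt{1-\delta^2}}$.

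The only genuinely delicate point is the passage from the componentwise estimates $\|p_k-q_k\|_a\leqslant\sqrt{2-2\sqrt{1-\delta^2}}$ to a bound on $\|x_j-y_j\|_a$ for the rotated basis: this is exactly where the dimension enters, via Cauchy--Schwarz on a row (or column) of the orthogonal matrix relating the two orthonormal bases of the same subspace, and it is the origin of the $\sqrt n$ (equivalently, of the $1+\sqrt n$ once the first-order term is absorbed). Everything else — existence of the canonical decomposition, invertibility of $P|_X$, and the Gram-matrix identities in the $a$-inner product — is standard finite-dimensional linear algebra.
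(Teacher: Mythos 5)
Your proof is correct and in fact establishes a sharper bound than the one stated. The paper does not give its own proof of this lemma (it quotes Proposition~3.4 of \cite{dai2025numerical}), so a line-by-line comparison is not possible, but both of your routes are sound. The canonical (principal-angle) decomposition you invoke is legitimate: with $P$ the $a$-orthogonal projection onto $Y$ and $\delta<1$, the SVD of $P|_X$ yields $a$-orthonormal bases $\{p_j\}$ of $X$ and $\{q_j\}$ of $Y$ with $a(p_i,q_j)=c_i\delta_{ij}$ and $c_n=\sqrt{1-\delta^2}$; rotating both by the orthogonal change-of-basis matrix $Q$ (with $x_j=\sum_k Q_{kj}p_k$) and applying Cauchy--Schwarz on the $j$-th column of $Q$ gives $\operatorname{dist}_a(x_j,y_j)\leqslant\|x_j-y_j\|_a\leqslant\sqrt n\,\sqrt{2-2\sqrt{1-\delta^2}}$, which dominates the stated $(1+\sqrt n)\sqrt{2-2\sqrt{1-\delta^2}}$. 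Your alternative via L\"owdin symmetric orthonormalization of $\{Px_j\}$ is sharper still and, notably, dimension-free: the Gram-matrix identity $G=\mathrm I-E$ with $0\preceq E\preceq\delta^2\mathrm I$ is a direct consequence of $a$-orthogonality of $P$, the estimate $\|z_j-y_j\|_a\leqslant\|G^{1/2}-\mathrm I\|_2\leqslant 1-\sqrt{1-\delta^2}$ follows from the spectrum of $G$ lying in $[1-\delta^2,1]$, and the two elementary inequalities $\delta\leqslant\sqrt{2-2\sqrt{1-\delta^2}}$ and $1-\sqrt{1-\delta^2}\leqslant\sqrt{2-2\sqrt{1-\delta^2}}$ that you use to compare with the stated form both check out. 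It is worth observing that your two constructions actually produce the same basis $\{y_j\}$ (since $G=Q^{\top}C^2Q$ with $C=\diag(c_1,\dots,c_n)$, one has $G^{-1/2}=Q^{\top}C^{-1}Q$, and a short computation shows $\sum_k(G^{-1/2})_{kj}z_k=\sum_m Q_{mj}q_m$), so the difference is purely in the estimation; the presence of $1+\sqrt n$ in the stated constant suggests the cited proof splits $\|x_j-y_j\|_a\leqslant\|x_j-Px_j\|_a+\|Px_j-y_j\|_a$ as in your second route but bounds the second term by a Frobenius-type estimate carrying the extra $\sqrt n$.
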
 
	
	In the $n$-th refinement, we set 
	\begin{align*}
		g_{n}:=\min_{1\leqslant i\leqslant q}\left|\lambda^{h_{n}}_{id_{i}}-\lambda^{h_{n}}_{(i+1)1}\right|,\quad \gamma_{n}:=\max_{1\leqslant i\leqslant q}\left(\lambda_{id_{i}}^{h_{n}}-\lambda_{i1}^{h_{n}}\right). 
	\end{align*}
	We obtain from the minimum-maximum principle that $g_{n}$ increases towards
	\begin{align*}
		\min_{1\leqslant i\leqslant q}(\lambda_{i+1}-\lambda_{i}),
	\end{align*}
	and $\gamma_{n}$ decreases towards $0$ as $n\rightarrow\infty$ and $h_{n}\rightarrow0$. 
	
	The following lemma provides the error estimate of \Cref{algo:pou_n_shifted} in the $n$-th refinement (see Theorem 4.4 in \cite{dai2025numerical} for more details). 
	\begin{lemma}\label{prop:clus_pou_n_shifted}Assume that $h_{n}\ll1$, and 
		there exists $0<\varepsilon_{n,0}\ll1$ and an orthonormal basis $\{u^{h_{n},0}_{ij}\}_{j=1}^{d_{i}}$ of $M_{h_{n}}(\lambda_{i}) (i=1,\ldots,q)$ with $b(u^{h_{n},0}_{ij}, u^{h_{n},0}_{kl})=\delta_{ik}\delta_{jl}$ such that for $(1,1)\leqslant(i,j)\leqslant(q,d_{q})$,
		\begin{align}\label{ine:iniiini}
			\operatorname{dist}_{a}\left(u^{h_{n},0}_{ij}, u_{i j}^{(n,0)}\right)\leqslant\varepsilon_{n,0},\quad 
			\left|\lambda^{h_{n}}_{ij}-\lambda_{ij}^{(n,0)}\right|\ll g_{n}.
		\end{align}
		If $\{u_{ij}^{(n,m)}\}_{i,j}$ are produced by \Cref{algo:pou_n_shifted}, then there exists an orthonormal basis $\{u^{h_{n},m}_{ij}\}_{j=1}^{d_{i}}$ of $M_{h_{n}}(\lambda_{i}) (i=1,\ldots,q)$ with $b(u^{h_{n},m}_{ij}, u^{h_{n},m}_{kl})=\delta_{ik}\delta_{jl}$, and a sequence $\{\varepsilon_{n,m}\}_{m\geqslant1}$ such that for $(1,1)\leqslant(i,j)\leqslant(q,d_{q})$,
		\begin{align*}
			\operatorname{dist}_{a}\left(u^{h_{n},m}_{ij}, u_{ij}^{(n,m)}\right)\leqslant\varepsilon_{n,m},\quad \left|\lambda^{h_{n}}_{ij}-\lambda_{ij}^{(n,m)}\right|\leqslant\frac{\lambda^{h_{n}}_{q+1, 1}}{\tilde{C}_{**}^{2}}\varepsilon_{n,m}^{2},\quad 
		\end{align*}
		where $\tilde{C}_{**}$ is a constant that is independent of $\tilde{X}_{n,m} (m=1,2,\ldots)$ and $\varepsilon_{n,m}\searrow 0$ as $m\rightarrow\infty$.
	\end{lemma}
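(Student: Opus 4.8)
The plan is to prove this by induction on the inner iteration index $m$, carried out entirely inside the finite-dimensional space $V^{h_{n}}$ (this is essentially Theorem~4.4 of \cite{dai2025numerical}). One expands every iterate in the $b$-orthonormal eigenbasis $\{u^{h_{n}}_{kl}\}$ of \cref{eq:fd_weak_form_leq}, distinguishing the $N$ ``cluster'' modes $(k,l)$ with $k\leqslant q$ from the ``tail'' modes with $k\geqslant q+1$. The induction hypothesis at step $m$ is the conclusion of the lemma with $m$ in place of $m+1$; in particular it gives $|\lambda^{h_{n}}_{ij}-\lambda^{(n,m)}_{ij}|\ll g_{n}$, so that, by the definitions of $g_{n}$ and $\gamma_{n}$, the shift $\bar\lambda^{(n,m)}_{i}$ (a convex combination of $\{\lambda^{(n,m)}_{ij}\}_{j}$) lies within $\gamma_{n}+O(\varepsilon_{n,m}^{2})$ of every $\lambda^{h_{n}}_{ij}$, $j=1,\ldots,d_{i}$, and at distance $\gtrsim g_{n}$ from every $\lambda^{h_{n}}_{i'l}$ with $i'\neq i$, including all tail eigenvalues. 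The aim of the induction step is a recursion $\varepsilon_{n,m+1}\lesssim\kappa_{n}(\varepsilon_{n,m})\,\varepsilon_{n,m}$ with $\kappa_{n}(\cdot)<1$ whenever $h_{n}\ll1$ and $\varepsilon_{n,0}\ll1$, which forces $\varepsilon_{n,m}\searrow0$.

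For the first stage (the shifted-inverse update, Step~5), writing $u^{(n,m)}_{ij}=\sum_{(k,l)}c_{kl}u^{h_{n}}_{kl}$ the source problem yields
\[
u^{(n,m+1/2)}_{ij}=\sum_{(k,l)}\frac{\bar\lambda^{(n,m)}_{i}}{\lambda^{h_{n}}_{kl}-\bar\lambda^{(n,m)}_{i}}\,c_{kl}\,u^{h_{n}}_{kl}.
\]
By the localisation of the shift noted above, the amplification factor has modulus $\gtrsim\lambda_{i}/(\gamma_{n}+\varepsilon_{n,m}^{2})$ on the cluster-$i$ modes and $\lesssim\lambda^{h_{n}}_{q+1,1}/g_{n}$ on all other modes; comparing the $a$-norms of the cluster-$i$ part and the complementary part of $u^{(n,m+1/2)}_{ij}$ gives
\[
\operatorname{dist}_{a}\!\left(u^{(n,m+1/2)}_{ij},M_{h_{n}}(\lambda_{i})\right)\lesssim\frac{\lambda^{h_{n}}_{q+1,1}(\gamma_{n}+\varepsilon_{n,m}^{2})}{\lambda_{i}\,g_{n}}\,\varepsilon_{n,m}=:\delta_{n,m+1},
\]
so each $u^{(n,m+1/2)}_{ij}$ is $O(\varepsilon_{n,m})$-close to $M_{h_{n}}(\lambda_{i})$ and the $N$-dimensional trial space $\tilde X_{n,m+1}$ is $O(\varepsilon_{n,m})$-close to $\bigoplus_{i=1}^{q}M_{h_{n}}(\lambda_{i})$. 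This is exactly the quasi-orthogonality mechanism of \cite{dai2025numerical}: the $u^{(n,m+1/2)}_{ij}$ need not be $b$-orthonormal, but each is anchored to a definite FE eigenspace and their union is close to an orthonormal system.

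For the second stage (the Rayleigh--Ritz correction, Step~6), since $\tilde X_{n,m+1}$ is $O(\varepsilon_{n,m})$-close to $\bigoplus_{i}M_{h_{n}}(\lambda_{i})$ inside $V^{h_{n}}$, the discrete analogue of \Cref{thm:cluster_eigen} (with $M_{h_{n}}(\lambda_{i})$ and $\tilde X_{n,m+1}$ in the roles of $M(\lambda_{i})$ and $V^{h}$) gives $0\leqslant\lambda^{(n,m+1)}_{ij}-\lambda^{h_{n}}_{ij}\lesssim\varepsilon_{n,m}^{2}$; since $\gamma_{n}\ll g_{n}$ the Ritz values cluster exactly as the $\{\lambda^{h_{n}}_{ij}\}$ do, so $\tilde M_{n}(\lambda_{i})=\operatorname{span}\{u^{(n,m+1)}_{i1},\ldots,u^{(n,m+1)}_{id_{i}}\}$ is well defined. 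A Rayleigh--Ritz perturbation argument, using that each $u^{(n,m+1/2)}_{ij}$ is anchored to $M_{h_{n}}(\lambda_{i})$ and that the clusters are gap-separated by $g_{n}$, shows that the orthogonalisation inherent in Step~6 preserves this anchoring up to a constant, $\operatorname{dist}_{a}(\tilde M_{n}(\lambda_{i}),M_{h_{n}}(\lambda_{i}))\lesssim\delta_{n,m+1}$. Then \Cref{prop:subspace_angle}, applied with the $a$-orthonormal basis $\{u^{(n,m+1)}_{ij}\}_{j}$ of $\tilde M_{n}(\lambda_{i})$ (rescaled from the $b$-orthonormal Ritz vectors) and $Y=M_{h_{n}}(\lambda_{i})$, produces an orthonormal basis $\{u^{h_{n},m+1}_{ij}\}_{j}$ of $M_{h_{n}}(\lambda_{i})$ with $\operatorname{dist}_{a}(u^{h_{n},m+1}_{ij},u^{(n,m+1)}_{ij})\lesssim(1+\sqrt{d_{i}})\,\delta_{n,m+1}=:\varepsilon_{n,m+1}$, while re-expressing the Ritz-value bound through $\varepsilon_{n,m+1}$ gives $|\lambda^{h_{n}}_{ij}-\lambda^{(n,m+1)}_{ij}|\leqslant\lambda^{h_{n}}_{q+1,1}\tilde C_{**}^{-2}\varepsilon_{n,m+1}^{2}$, with $\tilde C_{**}$ depending on the mesh only through $g_{n},\gamma_{n},\lambda^{h_{n}}_{q+1,1}$ (hence not on $m$). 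Since $\varepsilon_{n,m+1}\lesssim\delta_{n,m+1}=\frac{\lambda^{h_{n}}_{q+1,1}(\gamma_{n}+\varepsilon_{n,m}^{2})}{\lambda_{i}\,g_{n}}\varepsilon_{n,m}$, the factor multiplying $\varepsilon_{n,m}$ is $<1$ for all $m$ once $h_{n}\ll1$ (so $\gamma_{n}$ is small with $g_{n}$ bounded below) and $\varepsilon_{n,0}\ll1$; the induction then closes and $\varepsilon_{n,m}\searrow0$.

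The main obstacle is the second stage when some $d_{i}>1$: the FE eigenvalues $\lambda^{h_{n}}_{i1},\ldots,\lambda^{h_{n}}_{id_{i}}$ may be genuinely distinct, so one cannot track individual eigenvectors and must work throughout with the eigenspaces $M_{h_{n}}(\lambda_{i})$ and the subspace distance, controlling the possibly ill-conditioned within-cluster directions generated by the differential amplification of the shifted-inverse step; this is precisely where the quasi-orthogonality framework of \cite{dai2025numerical} — together with \Cref{prop:subspace_angle}, which converts a subspace-distance bound into a matched orthonormal-basis bound — is needed. A secondary point requiring care is the uniformity in $m$ of all implied constants, including $\tilde C_{**}$, which must be traced to depend on the mesh only through $g_{n}$, $\gamma_{n}$ and $\lambda^{h_{n}}_{q+1,1}$.
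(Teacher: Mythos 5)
The paper offers no proof of this lemma: it is stated as a quoted result whose proof is delegated entirely to Theorem~4.4 of \cite{dai2025numerical} (``see Theorem 4.4 in \cite{dai2025numerical} for more details''), so there is no in-paper argument against which to compare your proposal. Your reconstruction --- expanding iterates in the $b$-orthonormal FE eigenbasis of \cref{eq:fd_weak_form_leq}, analyzing the cluster-versus-tail amplification factors of the shifted-inverse solve, following with a Rayleigh--Ritz gap argument, and closing the induction via \Cref{prop:subspace_angle} to produce a matched basis --- is the expected structure and is fully consistent with the paper's own summary of the cited proof in \Cref{rem:obser} (dependence on $g_n$, $\gamma_n$, $\lambda^{h_n}_{q+1,1}$; contraction ratio improving as the mesh is refined).

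One technical step in your sketch is not yet settled. \Cref{prop:subspace_angle} produces an $a$-orthonormal basis of $M_{h_n}(\lambda_i)$, whereas the lemma asserts the existence of a $b$-orthonormal basis with $b(u^{h_n,m}_{ij}, u^{h_n,m}_{kl}) = \delta_{ik}\delta_{jl}$. Within a discrete cluster the FE eigenvalues $\lambda^{h_n}_{i1},\ldots,\lambda^{h_n}_{id_i}$ are generically distinct, so an $a$-orthonormal set in $M_{h_n}(\lambda_i)$ is not automatically $b$-orthogonal: the Gram matrix $b(y_j,y_l)$ of the basis your argument produces deviates from the identity by terms of order $\gamma_n/\lambda_i$, a quantity fixed by the mesh that is eventually larger than $\varepsilon_{n,m}$ as $m\to\infty$, so a naive $b$-re-orthonormalization spoils the recursion. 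You need either a variant of \Cref{prop:subspace_angle} that matches $b$-orthonormal bases while still measuring distances in $\|\cdot\|_a$ (using the equivalence of the two norms on the finite-dimensional span of $X\cup Y$), or a direct Davis--Kahan-type perturbation bound matching the $b$-orthonormal Ritz vectors to $b$-orthonormal FE eigenvectors. Since the paper merely cites \cite{dai2025numerical}, this is the point to check against that reference before treating the sketch as complete.
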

	
	\begin{remark}\label{rem:obser}
		We observe from the proof of \Cref{prop:clus_pou_n_shifted} (Theorem 4.4 in \cite{dai2025numerical}) that we can choose the sequence $\{\varepsilon_{n,0}\}_{n\geqslant1}$ that increases with respect to $n$ since $g_{n}$ increases and $\gamma_{n}$ decreases as the mesh is refined, and $\tilde{C}_{**}$ can be chosen to be independent of the mesh size when $h_{0}\ll1$. Moreover, it has been shown in \cite{dai2025numerical} that
		\begin{align*}
			\lim_{n\rightarrow\infty}\lim_{m\rightarrow\infty}\frac{\varepsilon_{n,m+1}}{\varepsilon_{n,m}}=0.
		\end{align*}  That is, the finer the mesh, the better the ParO iteration approximates.
		
	\end{remark}
	
	The following proposition shows that after some ParO iterations, the error between the ParO approximations and the FE approximations can be controlled by the error estimator for the FE discretization in each refinement.
	
	\begin{proposition}\label{prop:paro_conver_given}
		Let $\left\{\left(\lambda_{ij}^{(n)}, u_{ij}^{(n)}\right)\right\}$ be produced by
		\Cref{algo:pou_n_shifted} with $m_{n}$ steps of ParO iterations in the $n$-th refinement, and $\tilde{M}_{n}(\lambda_{i})=\operatorname{span}\{u_{i1}^{(n)}, \ldots, u_{id_{i}}^{(n)}\}$ for $i=1,\ldots,q$. For the initial data $\left\{\left(\lambda_{ij}^{(-1)}, u_{ij}^{(-1)}\right)\right\}$,  there exists an orthonormal basis $\{u^{h_{0}, 0}_{ij}\}_{j=1}^{d_{i}}$ of $M_{h_{0}}$ with $b(u^{h_{0}, 0}_{ij}, u^{h_{0}, 0}_{kl})=\delta_{ik}\delta_{jl}$ such that 
		\begin{align}\label{ine:initial_condition}
			\operatorname{dist}_{a}\left(u^{h_{0}, 0}_{ij},u_{ij}^{(-1)}\right)\ll1,\quad  \left|\lambda^{h_{0}}_{ij}-\lambda_{ij}^{(-1)}\right|\ll g_{0},\quad (1,1)\leqslant(i,j)\leqslant(q,d_{q}).
		\end{align}
		If $h_{0}\ll1$, then for any $\tilde{\rho}\ll1$ and any $n\geqslant0$, there exists $m_{n}$ large enough such that 
		\begin{equation}
			\begin{aligned}\label{ine:initial_con}
				&\sum_{i=1}^{q}\left(\operatorname{dist}^{2}_{a}\left(M_{h_{n}}(\lambda_{i}), \tilde{M}_{n}(\lambda_{i})\right)+\sum_{j=1}^{d_{i}}\left|\lambda_{ij}^{h_{n}}-\lambda_{ij}^{(n)}\right|^{2}\right)\\\leqslant&\tilde{\rho}\min\{\tilde{\eta}_{h_{n-1}}^{2}(\tilde{\mathcal{E}}_{n-1}U, \Omega), \eta_{h_{n}}^{2}\left(\mathcal{E}_{h_{n}}U, \Omega\right)\},
			\end{aligned}
		\end{equation}
		where $\tilde{\eta}_{h_{-1}}^{2}(\tilde{\mathcal{E}}_{-1}U, \Omega):=\infty$.
		
	\end{proposition}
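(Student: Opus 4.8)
The plan is to proceed by induction on $n$, using \Cref{prop:clus_pou_n_shifted} and \Cref{rem:obser} to control the ParO iteration error in terms of a quantity $\varepsilon_{n,m_n}$ that we can drive below any prescribed threshold by taking $m_n$ large, and combining this with the equivalences and estimates among error estimators established in Section 4.1 to convert the threshold into the right-hand side of \cref{ine:initial_con}. The key point is that the target bound involves $\tilde{\eta}_{h_{n-1}}^{2}(\tilde{\mathcal{E}}_{n-1}U,\Omega)$ and $\eta_{h_n}^{2}(\mathcal{E}_{h_n}U,\Omega)$, both of which are \emph{positive quantities depending only on the meshes and the exact eigenspaces}, not on the ParO approximations; so once the mesh $\mathscr{T}_{h_n}$ is fixed (which it is, since $\mathscr{T}_{h_n}$ is determined by the approximations from the previous refinement, which by the inductive hypothesis are already good enough that \Cref{prop:eqqqqqq} applies), the right-hand side is a fixed positive number, and it suffices to make the left-hand side smaller than it.

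The induction runs as follows. \textbf{Base case and inductive step, setup:} Suppose that at the start of the $n$-th refinement we have a mesh $\mathscr{T}_{h_n}$ with $h_n\leqslant h_0\ll1$ and initial data $\{(\lambda_{ij}^{(n,0)},u_{ij}^{(n,0)})\}$ satisfying \cref{ine:iniiini} with some $\varepsilon_{n,0}\ll1$. For $n=0$ this is exactly the hypothesis \cref{ine:initial_condition} (with $u_{ij}^{(0,0)}=u_{ij}^{(-1)}$ and $\varepsilon_{0,0}\ll1$); for $n\geqslant1$ it will be guaranteed by the output of the $(n-1)$-th refinement together with the standard fact (e.g.\ from \cite{dai2025numerical} or \Cref{prop:subspace_angle}) that a good approximation in $V^{h_{n-1}}\subset V^{h_n}$ remains a good approximation in the finer space $V^{h_n}$, since $M_{h_{n-1}}(\lambda_i)$ is close to $M_{h_n}(\lambda_i)$ when $h_0\ll1$, with $\varepsilon_{n,0}$ controlled by the $(n-1)$-th output plus $\operatorname{dist}_a(M_{h_{n-1}}(\lambda_i),M_{h_n}(\lambda_i))$; by \Cref{rem:obser} we may take the sequence $\{\varepsilon_{n,0}\}$ increasing, so the smallness is uniform. \textbf{Driving the error down:} By \Cref{prop:clus_pou_n_shifted}, after $m_n$ ParO iterations there is an orthonormal basis $\{u^{h_n,m_n}_{ij}\}$ of $M_{h_n}(\lambda_i)$ with $\operatorname{dist}_a(u^{h_n,m_n}_{ij},u^{(n,m_n)}_{ij})\leqslant\varepsilon_{n,m_n}$ and $|\lambda^{h_n}_{ij}-\lambda^{(n,m_n)}_{ij}|\leqslant \tilde C\varepsilon_{n,m_n}^2$, where $\varepsilon_{n,m_n}\searrow0$ as $m_n\to\infty$. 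Since $\operatorname{dist}_a^2(M_{h_n}(\lambda_i),\tilde M_n(\lambda_i))\leqslant\sum_{j=1}^{d_i}\operatorname{dist}_a^2(u^{h_n,m_n}_{ij},u^{(n,m_n)}_{ij})$ (by \Cref{prop:subspace_angle}-type reasoning, or directly since the $u^{h_n,m_n}_{ij}$ span $M_{h_n}(\lambda_i)$), the whole left-hand side of \cref{ine:initial_con} is bounded by $C'(\varepsilon_{n,m_n}^2+\varepsilon_{n,m_n}^4)\lesssim \varepsilon_{n,m_n}^2$ for $\varepsilon_{n,m_n}\ll1$.

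\textbf{Matching to the estimator:} It now remains to choose $m_n$ so large that $C'\varepsilon_{n,m_n}^2\leqslant\tilde\rho\min\{\tilde\eta_{h_{n-1}}^2(\tilde{\mathcal{E}}_{n-1}U,\Omega),\eta_{h_n}^2(\mathcal{E}_{h_n}U,\Omega)\}$. For $n=0$ the minimum is just $\eta_{h_0}^2(\mathcal{E}_{h_0}U,\Omega)$ since $\tilde\eta_{h_{-1}}^2:=\infty$; for $n\geqslant1$ both terms are strictly positive fixed numbers (the estimators of the exact-eigenspace projections vanish only if the exact eigenfunctions are finite-element functions, which generically fails; if one insists on handling the degenerate case one simply notes that if $\eta_{h_n}(\mathcal{E}_{h_n}U,\Omega)=0$ then $M_{h_n}(\lambda_i)=M(\lambda_i)$ and the algorithm has effectively converged on this mesh, so the claim is vacuous or trivial there). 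Hence the right-hand side is a fixed positive constant and, since $\varepsilon_{n,m_n}\searrow0$, a finite $m_n$ suffices. This also feeds the induction forward: the output $\{(\lambda_{ij}^{(n)},u_{ij}^{(n)})\}=\{(\lambda_{ij}^{(n,m_n)},u_{ij}^{(n,m_n)})\}$ satisfies \cref{ine:iniiini} at level $n+1$ with the required smallness because $\varepsilon_{n,m_n}$ can be made as small as we like (in particular $\ll$ the threshold for $\varepsilon_{n+1,0}$, and $|\lambda^{h_n}_{ij}-\lambda^{(n)}_{ij}|\ll g_n\leqslant g_{n+1}$).

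\textbf{Main obstacle.} The subtle point — and the step I expect to require the most care — is the bookkeeping that makes the induction self-consistent: the mesh $\mathscr{T}_{h_{n+1}}$ is built from $U^{(n)}$, so to even \emph{state} the target bound at level $n+1$ we need $\mathscr{T}_{h_{n+1}}$ well-defined, which requires the marking step to have been applied to approximations accurate enough that \Cref{prop:eqqqqqq} (hence the equivalence $\tilde\eta_{h_n}(U^{(n)},\Omega)\cong\tilde\eta_{h_n}(\tilde{\mathcal{E}}_nU,\Omega)$) and \Cref{prop:error_est} hold; and we need the passage from $M_{h_n}$-quality to $M_{h_{n+1}}$-quality of the initial guess to be quantitative and uniform in $n$. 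All the ingredients for this are available — \Cref{prop:eqqqqqq}, \Cref{prop:error_est}, \Cref{rem:obser} (uniformity of $\tilde C_{**}$ and the freedom to take $\varepsilon_{n,0}$ increasing), and the $h_0\ll1$ closeness of consecutive finite-element eigenspaces — but assembling them into a clean induction with all the "$\ll1$" conditions simultaneously satisfiable is where the real work lies; the estimator arithmetic itself is then immediate.
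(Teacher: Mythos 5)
Your proposal is correct and follows essentially the same approach as the paper: Lemma~\ref{prop:clus_pou_n_shifted} drives the ParO error at level $n$ down to an arbitrarily small threshold, the threshold is compared against the fixed positive right-hand side of \cref{ine:initial_con} (which depends only on the meshes and exact eigenspaces, not on the current ParO iterates), and the output is propagated forward to serve as admissible initial data \cref{ine:iniiini} at level $n+1$ via a triangle inequality through $M(\lambda_i)$, Lemma~\ref{prop:subspace_angle}, and the monotonicity of $\{\varepsilon_{n,0}\}$ and $\{g_n\}$ noted in Remark~\ref{rem:obser}. The paper does not literally phrase it as an induction but does precisely the $n=0\to n=1$ bookkeeping you describe and then says ``iterate continuously''; what you flag as the ``main obstacle'' (simultaneously enforcing the various $\ll 1$ smallness conditions uniformly in $n$) is handled in the paper by exactly the triangle-inequality decomposition through $M(\lambda_i)$ and the increasing-$\varepsilon_{n,0}$ observation you already cite, so there is no genuine gap relative to the paper's own level of rigor.
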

	\begin{proof}
		We start with $n=0$.
		
		Let $\{\varepsilon_{n,m}\}$ be the sequence appearing in \Cref{prop:clus_pou_n_shifted} and satisfies that $\{ \varepsilon_{n,0}\}_{n\geqslant 1}$  increases with respect to $n$ (see \Cref{rem:obser}). Then we obtain from $h_0\ll 1$ and the correlation between the mesh and $\{ \varepsilon_{n,0}\}_{n\geqslant 1}$ that
		\begin{align*}
			\operatorname{dist}_{a}(M(\lambda_{i}), V^{h_{0}})\ll\varepsilon_{0,0}.
		\end{align*}
		
		We see that $\{u_{ij}\}_{j=1}^{d_{i}}$ is an orthonormal basis of $M(\lambda_{i})$ and  $E_{h_{0}}^{(i)}$ is an orthogonal projection with respect to $a(\cdot, \cdot)$. We obtain from \Cref{lem:dissyysybs} and \Cref{thm:cluster_eigenfunc} that if $u\in M(\lambda_{i})$, then 
		\begin{equation}\label{ine:dist_vec_spa}
			\begin{aligned}
				\operatorname{dist}_{a}(u, M_{h_{0}}(\lambda_{i}))\leqslant \sqrt{d_{i}}\max_{1\leqslant j\leqslant d_{i}}\operatorname{dist_{a}}(u_{ij}, E_{h_{0}}^{(i)}u_{ij})\lesssim\operatorname{dist}_{a}(M(\lambda_{i}), V^{h_{0}}), \quad i=1,\ldots,q,
			\end{aligned}  
		\end{equation}
		when $h_{0}\ll1$, that is
		\begin{align}\label{ine:h0}
			\operatorname{dist}_{a}(M(\lambda_{i}), M_{h_{0}}(\lambda_{i}))\ll\varepsilon_{0,0}, \quad i=1,\ldots,q.
		\end{align}
		
		By \cref{ine:initial_condition}, we have that 
		\begin{align*}
			\operatorname{dist}_{a}\left(u^{h_{0}, 0}_{ij},u_{ij}^{(0,0)}\right)\leqslant\varepsilon_{0,0},\quad  \left|\lambda^{h_{0}}_{ij}-\lambda_{ij}^{(0,0)}\right|\ll g_{0},\quad (1,1)\leqslant(i,j)\leqslant(q,d_{q}).
		\end{align*}
		
		We then derive from \Cref{prop:clus_pou_n_shifted} that there exists $m_{0}$ large enough, such that for the ParO approximations $\left\{\left(\lambda_{ij}^{(0)}, u_{ij}^{(0)}\right)\right\}=\left\{\left(\lambda_{ij}^{(0, m_{0})}, u_{ij}^{(0, m_{0})}\right)\right\}$, there exists an orthonormal basis $\{u_{ij}^{h_{0}, m_{0}}\}_{j=1}^{d_{i}}$ of $M_{h_{0}}(\lambda_{i})$ with $b\left(u_{ij}^{h_{0}, m_{0}}, u_{kl}^{h_{0}, m_{0}}\right)=\delta_{ik}\delta_{jl}$ satisfying for $(1,1)\leqslant(i,j)\leqslant(q,d_{q})$,
		\begin{equation}
			\begin{aligned}\label{ine:paro_res}
				&\operatorname{dist}_{a}(u_{ij}^{h_{0}, m_{0}}, u_{ij}^{(0)})\ll \min\{\tilde{\eta}_{h_{-1}}(\tilde{\mathcal{E}}_{-1}U, \Omega), \eta_{h_{0}}\left(\mathcal{E}_{h_{0}}U, \Omega\right),\varepsilon_{0,0}\},\\\quad &\left|\lambda_{ij}^{(0)}-\lambda_{ij}^{h_{0}}\right|\ll \min\{\tilde{\eta}_{h_{-1}}^{2}(\tilde{\mathcal{E}}_{-1}U, \Omega), \eta_{h_{0}}^{2}\left(\mathcal{E}_{h_{0}}U, \Omega\right),g_{0}\}.
			\end{aligned}
		\end{equation}
		
		Next, we show that $\left\{\left(\lambda_{ij}^{(1,0)}, u_{ij}^{(1,0)}\right)\right\}:=\left\{\left(\lambda_{ij}^{(0)}, u_{ij}^{(0)}\right)\right\}$ satisfies the initial condition \cref{ine:iniiini} in the next refinement.

		Since $\{u^{h_{0}, m_{0}}_{ij}\}_{j=1}^{d_{i}}$ is an orthonormal basis of $M_{h_{0}}(\lambda_{i})$, for $\varphi\in M_{h_{0}}(\lambda_{i})$, we obtain from \Cref{lem:dissyysybs} that
		\begin{align*}
			&\operatorname{dist}_{a}\left(\varphi, \tilde{M}_{0}(\lambda_{i})\right)\leqslant \sqrt{\sum_{j=1}^{d_{i}}\operatorname{dist}_{a}^{2}\left(u^{h_{0}, m_{0}}_{ij},\tilde{M}_{0}(\lambda_{i})\right)}\leqslant \sqrt{\sum_{j=1}^{d_{i}}\operatorname{dist}_{a}^{2}\left(u^{h_{0}, m_{0}}_{ij},u_{ij}^{(1,0)}\right)},
		\end{align*}
		which together with \cref{ine:paro_res} yields
		\begin{align}\label{ine:clu_pieces}
			\operatorname{dist}_{a}\left(M_{h_{0}}(\lambda_{i}), \tilde{M}_{0}(\lambda_{i})\right)\leqslant\sqrt{\sum_{j=1}^{d_{i}}\operatorname{dist}_{a}^{2}\left(u^{h_{0},m_{0}}_{ij},u_{ij}^{(1,0)}\right)}\ll\varepsilon_{0,0},\quad i=1,\ldots,q.
		\end{align}
		
		It follows from \cref{ine:h0}, \cref{ine:clu_pieces} and similar results for $M_{h_{1}}(\lambda_{i})$ that
		\begin{align*}
			\operatorname{dist}_{a}(M_{h_{1}}(\lambda_{i}), \tilde{M}_{0}(\lambda_{i}))\leqslant&\operatorname{dist}_{a}(M_{h_{1}}(\lambda_{i}), M(\lambda_{i}))+\operatorname{dist}_{a}(M(\lambda_{i}), M_{h_{0}}(\lambda_{i}))\\&+\operatorname{dist}_{a}(M_{h_{0}}(\lambda_{i}), \tilde{M}_{0}(\lambda_{i}))\ll\varepsilon_{0,0}.
		\end{align*}
		Hence we get from \Cref{prop:subspace_angle} that there exists an orthonormal basis $\{u_{ij}^{h_{1},0}\}_{j=1}^{d_{i}}$ of $M_{h_{1}}(\lambda_{i}) (i=1,\ldots,q)$ with $b\left(u_{ij}^{h_{1},0}, u_{kl}^{h_{1},0}\right)=\delta_{ik}\delta_{jl}$ such that 
		\begin{align*}
			\operatorname{dist}_{a}\left(u_{ij}^{h_{1},0}, u_{ij}^{(1,0)}\right)\leqslant\varepsilon_{0,0}\leqslant\varepsilon_{1,0}.
		\end{align*}
		
		We have from \Cref{thm:cluster_eigen}, \cref{ine:initial_condition} and $h_{1}\leqslant h_{0}\ll 1, g_{1}\geqslant g_{0}$ that
		\begin{align*}
			\left|\lambda_{ij}^{(0)}-\lambda_{ij}^{h_{1}}\right|\leqslant\left|\lambda_{ij}^{(0)}-\lambda_{ij}^{h_{0}}\right|+\left|\lambda_{ij}^{h_{0}}-\lambda_{i}\right|+\left|\lambda_{i}-\lambda_{ij}^{h_{1}}\right|\ll g_{1}.
		\end{align*}
		That is, the initial condition \cref{ine:iniiini} for ParO iterations holds in the next refinement.   Consequently, as iterate continuously, \cref{ine:initial_con} holds for any $n\geqslant0$. We complete the proof.
	\end{proof}
	
	\subsection{Convergence}
	By investigating the relationships among error estimators and the contraction property of the adaptive ParO approximations, and incorporating the quasi-orthogonality, we establish the convergence of the shifted-inverse based adaptive ParO algorithm (\Cref{algo:pou_n_shifted}) as follows.
	\begin{theorem}\label{thm:paro_conver}
		Under the same assumptions as in \Cref{prop:paro_conver_given}, there exists a constant $\beta\in(0,1)$ that is independent of the mesh size $h_{n} (n\geqslant0)$ such that for any $n\geqslant0$,
		\begin{align*}
			\sum_{i=1}^{q}\left(\operatorname{dist}_{a}^{2}\left(M(\lambda_{i}), \tilde{M}_{n}(\lambda_{i})\right)+\sum_{j=1}^{d_{i}}\left|\lambda_{ij}^{(n)}-\lambda_{i}\right|\right)\lesssim \beta^{2n}.
		\end{align*}
	\end{theorem}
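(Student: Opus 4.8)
The plan is to derive \Cref{thm:paro_conver} by verifying that, for a suitable choice of the tolerance parameters, the two standing hypotheses \cref{ine:assu1} and \cref{ine:assu2} of the contraction result \Cref{thm:iter_con} are satisfied by the approximations produced by \Cref{algo:pou_n_shifted}; once that is done, \Cref{thm:iter_con} gives exactly the claimed estimate. The conceptual work is already packaged into \Cref{thm:iter_con}, which turns ParO accuracy into a geometric contraction, and into \Cref{prop:paro_conver_given}, which guarantees refinement by refinement that a finite number $m_{n}$ of ParO iterations brings the ParO approximations to within any prescribed fraction of the discretization error; the job here is to splice these together while keeping track of which error estimator each accuracy bound is measured against.

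Concretely, I would first fix a threshold $\rho\ll1$ small enough that \Cref{thm:iter_con} applies (in particular so that \cref{ine:theta} holds and the argument there produces $\beta\in(0,1)$), and let $C$ be the constant from \Cref{prop:error_est}. Then I would choose $\tilde{\rho}\ll1$ small enough that $\sqrt{C\tilde{\rho}}<1$ and $\tilde{\rho}(1-\sqrt{C\tilde{\rho}})^{-2}\leqslant\rho$. Applying \Cref{prop:paro_conver_given} with this $\tilde{\rho}$ yields, for every $n\geqslant0$, an iteration count $m_{n}$ for which \cref{ine:initial_con} holds, i.e., the full ParO error at level $n$ is bounded by $\tilde{\rho}\min\{\tilde{\eta}_{h_{n-1}}^{2}(\tilde{\mathcal{E}}_{n-1}U, \Omega), \eta_{h_{n}}^{2}(\mathcal{E}_{h_{n}}U, \Omega)\}$, with the convention $\tilde{\eta}_{h_{-1}}^{2}(\tilde{\mathcal{E}}_{-1}U, \Omega)=\infty$.

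Next I would verify \cref{ine:assu1}. From \cref{ine:initial_con} at level $n$ the full error is $\leqslant\tilde{\rho}\,\eta_{h_{n}}^{2}(\mathcal{E}_{h_{n}}U, \Omega)$; feeding this back into \Cref{prop:error_est} gives $|\eta_{h_{n}}(\mathcal{E}_{h_{n}}U, \Omega)-\tilde{\eta}_{h_{n}}(\tilde{\mathcal{E}}_{n}U, \Omega)|^{2}\leqslant C\tilde{\rho}\,\eta_{h_{n}}^{2}(\mathcal{E}_{h_{n}}U, \Omega)$, hence $\eta_{h_{n}}(\mathcal{E}_{h_{n}}U, \Omega)\leqslant(1-\sqrt{C\tilde{\rho}})^{-1}\tilde{\eta}_{h_{n}}(\tilde{\mathcal{E}}_{n}U, \Omega)$, so the full error is $\leqslant\tilde{\rho}(1-\sqrt{C\tilde{\rho}})^{-2}\tilde{\eta}_{h_{n}}^{2}(\tilde{\mathcal{E}}_{n}U, \Omega)\leqslant\rho\,\tilde{\eta}_{h_{n}}^{2}(\tilde{\mathcal{E}}_{n}U, \Omega)$, which is \cref{ine:assu1}. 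For \cref{ine:assu2} no conversion is needed: \cref{ine:initial_con} at level $n+1$ bounds the level-$(n+1)$ full error by $\tilde{\rho}\,\tilde{\eta}_{h_{n}}^{2}(\tilde{\mathcal{E}}_{n}U, \Omega)\leqslant\rho\,\tilde{\eta}_{h_{n}}^{2}(\tilde{\mathcal{E}}_{n}U, \Omega)$, which is precisely why the minimum of the two consecutive estimators is built into \Cref{prop:paro_conver_given}. With \cref{ine:assu1} and \cref{ine:assu2} verified for all $n\geqslant0$, \Cref{thm:iter_con} produces the constant $\beta\in(0,1)$, independent of the mesh size, with $\sum_{i=1}^{q}(\operatorname{dist}_{a}^{2}(M(\lambda_{i}), \tilde{M}_{n}(\lambda_{i}))+\sum_{j=1}^{d_{i}}|\lambda_{ij}^{(n)}-\lambda_{i}|)\lesssim\beta^{2n}$.

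The main (mild) obstacle is bookkeeping rather than a genuine difficulty: the ParO accuracy furnished by \Cref{prop:paro_conver_given} is naturally stated against the finite element estimator $\eta_{h_{n}}^{2}(\mathcal{E}_{h_{n}}U, \Omega)$, whereas \Cref{thm:iter_con} demands it against the ParO estimator $\tilde{\eta}_{h_{n}}^{2}(\tilde{\mathcal{E}}_{n}U, \Omega)$, so the constant from \Cref{prop:error_est} must be absorbed by shrinking $\tilde{\rho}$ relative to $\rho$; one must also make sure the single $\tilde{\rho}$ (and the associated $m_{n}$) serves simultaneously \cref{ine:assu1} at level $n$ and \cref{ine:assu2} coupling levels $n$ and $n+1$, which is exactly the role of the two-term minimum in \cref{ine:initial_con}.
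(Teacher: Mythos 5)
Your proposal is correct and follows essentially the same route as the paper: invoke \Cref{prop:paro_conver_given} to get the ParO error below $\tilde{\rho}$ times the FE estimator, convert that to a bound against $\tilde{\eta}_{h_n}^2(\tilde{\mathcal{E}}_nU,\Omega)$ via \Cref{prop:error_est} to obtain \cref{ine:assu1}, observe that \cref{ine:assu2} is immediate from the two-term minimum in \cref{ine:initial_con}, and then apply \Cref{thm:iter_con}. The only cosmetic difference is the constant in the estimator conversion — you derive $\tilde{\rho}(1-\sqrt{C\tilde{\rho}})^{-2}$ via the triangle inequality, while the paper gets $2\tilde{\rho}/(1-2\tilde{\rho}C)$ via Young's inequality; both go to zero as $\tilde{\rho}\to0$, so the conclusion is unchanged.
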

	\begin{proof}
		We see from \Cref{prop:paro_conver_given} that when $h_{0}\ll1$ and some proper initial values are provided, after several ParO iterations, the error between the ParO approximations and the FE approximations can be controlled by the error estimator for the FE discretization in each refinement. That is, for any $n\geqslant0$ and  $\tilde{\rho}\ll1$, after some ParO iterations, there holds that
		\begin{align*}
			\sum_{i=1}^{q}\left(\operatorname{dist}^{2}_{a}\left(M_{h_{n}}(\lambda_{i}), \tilde{M}_{n}(\lambda_{i})\right)+\sum_{j=1}^{d_{i}}\left|\lambda_{ij}^{h_{n}}-\lambda_{ij}^{(n)}\right|^{2}\right)\leqslant&\tilde{\rho}\eta_{h_{n}}^{2}\left(\mathcal{E}_{h_{n}}U, \Omega\right),
		\end{align*}
		which together with \Cref{prop:error_est} yields that
		\begin{align*}
			&\sum_{i=1}^{q}\left(\operatorname{dist}^{2}_{a}\left(M_{h_{n}}(\lambda_{i}), \tilde{M}_{n}(\lambda_{i})\right)+\sum_{j=1}^{d_{i}}\left|\lambda_{ij}^{h_{n}}-\lambda_{ij}^{(n)}\right|^{2}\right)\leqslant\frac{2\tilde{\rho}}{1-2\tilde{\rho}C}\tilde{\eta}_{h_{n}}^{2}\left(\tilde{\mathcal{E}}_{n}U, \Omega\right).
		\end{align*}
		Namely, the condition \cref{ine:assu1} is satisfied. \cref{ine:assu2} follows from \Cref{prop:paro_conver_given} in the $(n+1)$-th refinement. Then by \Cref{thm:iter_con}, we complete the proof.
	\end{proof}
	
	\Cref{thm:paro_conver} provides the convergence of approximations produced by \Cref{algo:pou_n_shifted}. We see that when $h_{0}\ll1$ and some proper initial values are provided (i.e., the initial orbitals are quasi-orthogonal, which means that they are in the neighbourhood of an orthogonal basis of the subspace $M_{h_{0}}(\lambda_{i})$ spanned by the FE approximations), to ensure the
	convergence of \Cref{algo:pou_n_shifted}, we only need to match the errors of approximations produced by the ParO iterative computations to the approximation accuracy of FE discretizations to satisfy \cref{ine:initial_con}. When the discretization is not accurate enough, we do not need to perform the ParO iterations too many times. Moreover, in accordance with the analysis in \cite{dai2025numerical}, with larger $g_{n}$ and smaller $\gamma_{n}$, the ParO iteration approximates better, which tells that as the iterations progress and the mesh updates, the speed of convergence will be  increasingly accelerated.
	
	\section{Concluding remarks}  
	The adaptive ParO method avoids the direct solution of large scale eigenvalue problems by solving source problems intrinsically in parallel in adaptive FE spaces to produce quasi-orthogonal approximations of orbitals, and solving some small scale eigenvalue problems in the lowest-dimensional subspace generated from the approximations. The numerical experiments in \cite{dai2014parallel} show the efficiency of the adaptive ParO method and its great potential for large scale parallelization, especially when many highly accurate eigenpair approximations are required. 
	
	In this paper, we have carried out the numerical analysis of the adaptive ParO method for solving eigenvalue problems of partial differential operators, which provides a mathematical justification for the numerical experiments in \cite{dai2014parallel}. With the investigation on the relationship among various types of error estimators, the contraction property of the adaptive ParO approximations, and the quasi-orthogonality of the approximations, we have obtained the convergence of approximations produced by the adaptive ParO method.
	We conclude from our analysis that to ensure the convergence of the approach, we only need to match the errors of approximations produced by the ParO iterations to the accuracy of FE approximations. When the discretization is not accurate enough, we do not need to perform the ParO iterations too many times, which will significantly enhance the computational efficiency. It is our on-going work to carry out the numerical analysis of the ParO method incorporating into other computational adaptive methods, such as the adaptive planewave method \cite{AAMM-16-636}.
	
	\appendix
	\section{Detailed proofs}\label{appen}
	\subsection{Proof of \Cref{lem:dissyysybs}}\label{proof:lem:dissyysybs}
	\begin{proof}
	We observe that $\operatorname{dist}_{a}(x, Y)$ and $\operatorname{dist}_{a}(x_{i}, Y)$ are independent of the norms of $x$ or $x_{i}$ for $i=1,\ldots, d$. 
	Then, without loss of generality, we set $\Vert x\Vert_{a}=\Vert x_{i}\Vert_{a}=1$ for $i=1,\ldots, d$. Let $\{\alpha_{i}\}_{i=1}^{d}$ be the constants such that $x=\sum_{i=1}^{d}\alpha_{i}x_{i}$ with $\sum_{i=1}^{d}\alpha_{i}^{2}=1$, and $\mathcal{P}_{Y}$ be the orthogonal projection from $X$ onto $Y$ with respect to $a(\cdot, \cdot)$. We have
		\begin{align*}
			\operatorname{dist}_{a}(x, Y)=\left\Vert x-\mathcal{P}_{Y}x\right\Vert_{a}=&\left\Vert\sum_{i=1}^{d}\alpha_{i}\left(\operatorname{I}-\mathcal{P}_{Y}\right)x_{i}\right\Vert_{a}\\\leqslant&\sum_{i=1}^{d}|\alpha_{i}|\left\Vert\left(\operatorname{I}-\mathcal{P}_{Y}\right)x_{i}\right\Vert_{a}\leqslant\sqrt{\sum_{i=1}^{d}\operatorname{dist}_{a}^{2}(x_{i}, Y)},
		\end{align*}
		which completes the proof.
	\end{proof}
	\subsection{Proof of \Cref{prop:error_est}}\label{proof:prop:error_est}
	\begin{proof}
		It is sufficient to prove for $(1,1)\leqslant(i,j)\leqslant(q,d_{q})$ that
		\begin{align}\label{ine:vecvec}
			\left|\eta_{h_{n}}(E_{h_{n}}^{(i)}u_{ij}, \Omega)-\tilde{\eta}_{h_{n}}(\tilde{E}_{n}^{(i)}u_{ij}, \Omega)\right|^{2}\leqslant\tilde{C}\left(\left\Vert E_{h_{n}}^{(i)}u_{ij}-\tilde{E}_{n}^{(i)}u_{ij} \right\Vert_{a}^{2}+\sum_{r=1}^{q}\sum_{s=1}^{d_{r}}\left|\lambda_{rs}^{h_{n}}-\lambda_{rs}^{(n)}\right|^{2}\right), 
		\end{align}
		where $\tilde{C}>0 $ is independent of the mesh size. By the triangle inequality, indeed, there holds
		\begin{equation}\label{ine:func_vec}
			\begin{aligned}
				&\left|\eta_{h_{n}}(\mathcal{E}_{h_{n}}U, \Omega)-\tilde{\eta}_{h_{n}}(\tilde{\mathcal{E}}_{n}U, \Omega)\right|^{2}\\=&\left|\sqrt{\sum_{i=1}^{q}\sum_{j=1}^{d_{i}}\eta_{h_{n}}^{2}(E_{h_{n}}^{(i)}u_{ij}, \Omega)}-\sqrt{\sum_{i=1}^{q}\sum_{j=1}^{d_{i}}\tilde{\eta}_{h_{n}}^{2}(\tilde{E}_{n}^{(i)}u_{ij}, \Omega)}\right|^{2}\\\leqslant&\sum_{i=1}^{q}\sum_{j=1}^{d_{i}}\left|\eta_{h_{n}}(E_{h_{n}}^{(i)}u_{ij}, \Omega)-\tilde{\eta}_{h_{n}}(\tilde{E}_{n}^{(i)}u_{ij}, \Omega)\right|^{2}\\\leqslant&\sum_{i=1}^{q}\sum_{j=1}^{d_{i}}\tilde{C}\left(\left\Vert E_{h_{n}}^{(i)}u_{ij}-\tilde{E}_{n}^{(i)}u_{ij} \right\Vert_{a}^{2}+\sum_{r=1}^{q}\sum_{s=1}^{d_{r}}\left|\lambda_{rs}^{h_{n}}-\lambda_{rs}^{(n)}\right|^{2}\right).
			\end{aligned}
		\end{equation}
		Since $\tilde{E}_{n}^{(i)}:=\mathcal{P}^{(i)}\circ E_{h_{n}}^{(i)}$ and $\mathcal{P}^{(i)}$ is an orthogonal projection with respect to $a(\cdot, \cdot)$ and $h_{n}\leqslant h_{0}\ll1$, we obtain
		\begin{equation}
			\begin{aligned}\label{ine:func_veccccc}
				&\left|\eta_{h_{n}}(\mathcal{E}_{h_{n}}U, \Omega)-\tilde{\eta}_{h_{n}}(\tilde{\mathcal{E}}_{n}U, \Omega)\right|^{2}\\\leqslant&\sum_{i=1}^{q}\sum_{j=1}^{d_{i}}\tilde{C}\left(\left\Vert \left(\mathrm{I}-\mathcal{P}^{(i)}\right)E_{h_{n}}^{(i)}u_{ij}\right\Vert_{a}^{2}+\sum_{r=1}^{q}\sum_{s=1}^{d_{r}}\left|\lambda_{rs}^{h_{n}}-\lambda_{rs}^{(n)}\right|^{2}\right)\\\leqslant&\sum_{i=1}^{q}\sum_{j=1}^{d_{i}}C\left(\operatorname{dist}^{2}_{a}\left(E_{h_{n}}^{(i)}u_{ij}, \tilde{M}_{n}(\lambda_{i})\right)+\sum_{r=1}^{q}\sum_{s=1}^{d_{r}}\left|\lambda_{rs}^{h_{n}}-\lambda_{rs}^{(n)}\right|^{2}\right)\\\leqslant&C\sum_{i=1}^{q}\left(\operatorname{dist}^{2}_{a}\left(M_{h_{n}}(\lambda_{i}), \tilde{M}_{n}(\lambda_{i})\right)+\sum_{j=1}^{d_{i}}\left|\lambda_{ij}^{h_{n}}-\lambda_{ij}^{(n)}\right|^{2}\right),
			\end{aligned}    
		\end{equation}
		where $C=\lambda_{q}\tilde{C}\geqslant\left\Vert u_{ij}\right\Vert_{a}^{2}\tilde{C}\geqslant\left\Vert E_{h_{n}}^{(i)}u_{ij}\right\Vert_{a}^{2}\tilde{C}$ is independent of the mesh size.
		
		To prove \cref{ine:vecvec}, we set $\alpha_{rs}=b\left(E_{h_{n}}^{(i)}u_{ij},u_{rs}^{h_{n}}\right)$ and $\beta_{rs}=b\left(\tilde{E}_{n}^{(i)}u_{ij},u_{rs}^{(n)}\right)$, where $(1,1)\leqslant(r,s)\leqslant(q,d_{q})$. First, we estimate the element residual part as follows
		\begin{equation}\label{ine:resti}
			\begin{aligned}
				&\sum_{T\in\mathscr{T}_{h_{n}}}h_{T}^{2}\left\Vert\mathscr{R}_{T}(E_{h_{n}}^{(i)}u_{ij})-\tilde{\mathscr{R}}_{T}(\tilde{E}_{n}^{(i)}u_{ij}) \right\Vert_{0,T}^{2}\\=&\sum_{T\in\mathscr{T}_{h_{n}}}h_{T}^{2}\left\Vert\sum_{r=1}^{q}\sum_{s=1}^{d_{r}}\alpha_{rs}\lambda_{rs}^{h_{n}}u_{rs}^{h_{n}}+\mathcal{L}E_{h_{n}}^{(i)}u_{ij}-\sum_{r=1}^{q}\sum_{s=1}^{d_{r}}\beta_{rs}\lambda_{rs}^{(n)}u_{rs}^{(n)}-\mathcal{L}\tilde{E}_{n}^{(i)}u_{ij}\right\Vert_{0,T}^{2}\\\leqslant&\sum_{T\in\mathscr{T}_{h_{n}}}h_{T}^{2}\left(\left\Vert\sum_{r=1}^{q}\sum_{s=1}^{d_{r}}\alpha_{rs}\lambda_{rs}^{h_{n}}u_{rs}^{h_{n}}-\sum_{r=1}^{q}\sum_{s=1}^{d_{r}}\beta_{rs}\lambda_{rs}^{(n)}u_{rs}^{(n)}\right\Vert_{0,T}+\left\Vert\mathcal{L}E_{h_{n}}^{(i)}u_{ij}-\mathcal{L}\tilde{E}_{n}^{(i)}u_{ij} \right\Vert_{0,T}\right)^{2}\\\lesssim&\sum_{T\in\mathscr{T}_{h_{n}}}h_{T}^{2}\left(\left\Vert\sum_{r=1}^{q}\sum_{s=1}^{d_{r}}\alpha_{rs}\lambda_{rs}^{h_{n}}u_{rs}^{h_{n}}-\sum_{r=1}^{q}\sum_{s=1}^{d_{r}}\beta_{rs}\lambda_{rs}^{(n)}u_{rs}^{(n)}\right\Vert_{0,T}^{2}+\left\Vert\mathcal{L}E_{h_{n}}^{(i)}u_{ij}-\mathcal{L}\tilde{E}_{n}^{(i)}u_{ij} \right\Vert_{0,T}^{2}\right).
			\end{aligned}
		\end{equation}
		Note that 
		\begin{equation}\label{ine:firrrrrr}
			\begin{aligned}
				&\sum_{T\in\mathscr{T}_{h_{n}}}h_{T}^{2}\left\Vert\sum_{r=1}^{q}\sum_{s=1}^{d_{r}}\alpha_{rs}\lambda_{rs}^{h_{n}}u_{rs}^{h_{n}}-\sum_{r=1}^{q}\sum_{s=1}^{d_{r}}\beta_{rs}\lambda_{rs}^{(n)}u_{rs}^{(n)}\right\Vert_{0,T}^{2}\\\leqslant&h_{0}^{2}\sum_{T\in\mathscr{T}_{h_{n}}}\left\Vert\sum_{r=1}^{q}\sum_{s=1}^{d_{r}}\alpha_{rs}\lambda_{rs}^{h_{n}}u_{rs}^{h_{n}}-\sum_{r=1}^{q}\sum_{s=1}^{d_{r}}\beta_{rs}\lambda_{rs}^{(n)}u_{rs}^{(n)}\right\Vert_{0,T}^{2}\\\lesssim&\left\Vert\sum_{r=1}^{q}\sum_{s=1}^{d_{r}}\lambda_{rs}^{h_{n}}\left(\alpha_{rs}u_{rs}^{h_{n}}-\beta_{rs}u_{rs}^{(n)}\right)\right\Vert_{b}^{2}+\left\Vert\sum_{r=1}^{q}\sum_{s=1}^{d_{r}}\left(\lambda_{rs}^{h_{n}}-\lambda_{rs}^{(n)}\right)\beta_{rs}u_{rs}^{(n)}\right\Vert_{b}^{2}.
			\end{aligned}
		\end{equation}
		Due to $b(u_{ij}^{h_{n}}, u_{kl}^{h_{n}})=b(u_{ij}^{(n)}, u_{kl}^{(n)})=\delta_{ik}\delta_{jl}$, we have
		\begin{equation}\label{eq:first_part}
			\begin{aligned}
				&\left\Vert\sum_{r=1}^{q}\sum_{s=1}^{d_{r}}\lambda_{rs}^{h_{n}}\left(\alpha_{rs}u_{rs}^{h_{n}}-\beta_{rs}u_{rs}^{(n)}\right)\right\Vert_{b}^{2}\\=&\left\Vert\sum_{r=1}^{q}\sum_{s=1}^{d_{r}}\lambda_{rs}^{h_{n}}\alpha_{rs}u_{rs}^{h_{n}} \right\Vert_{b}^{2}+\left\Vert\sum_{r=1}^{q}\sum_{s=1}^{d_{r}}\lambda_{rs}^{h_{n}}\beta_{rs}u_{rs}^{(n)} \right\Vert_{b}^{2}-2\sum_{(r,s)}\sum_{(k,l)}\lambda_{rs}^{h_{n}}\lambda_{kl}^{h_{n}}\alpha_{rs}\beta_{kl}b\left(u_{rs}^{h_{n}}, u_{kl}^{(n)}\right)\\=&\sum_{r=1}^{q}\sum_{s=1}^{d_{r}}\left(\left(\lambda_{rs}^{h_{n}}\alpha_{rs}\right)^{2}+\left(\lambda_{rs}^{h_{n}}\beta_{rs}\right)^{2}\right)-2\sum_{(r,s)}\sum_{(k,l)}\lambda_{kl}^{h_{n}}\alpha_{rs}\beta_{kl}a\left(u_{rs}^{h_{n}}, u_{kl}^{(n)}\right),
			\end{aligned}
		\end{equation}
		and 
		\begin{equation}\label{eq:a_norm}
			\begin{aligned}
				&\left\Vert E_{h_{n}}^{(i)}u_{ij}-\tilde{E}_{n}^{(i)}u_{ij} \right\Vert_{a}^{2}\\=&\left\Vert\sum_{r=1}^{q}\sum_{s=1}^{d_{r}}\alpha_{rs}u_{rs}^{h_{n}}-\sum_{r=1}^{q}\sum_{s=1}^{d_{r}}\beta_{rs}u_{rs}^{(n)}\right\Vert_{a}^{2}\\=&\sum_{r=1}^{q}\sum_{s=1}^{d_{r}}\left(\lambda_{rs}^{h_{n}}\alpha_{rs}^{2}+\lambda_{rs}^{(n)}\beta_{rs}^{2}\right)-2\sum_{(r,s)}\sum_{(k,l)}\alpha_{rs}\beta_{kl}a\left(u_{rs}^{h_{n}}, u_{kl}^{(n)}\right).
			\end{aligned}
		\end{equation}
		Since $h_{n}\leqslant h_{0}\ll1$, there holds that $\lambda_{q+1}\geqslant\lambda_{rs}^{h_{n}}$ for $(1,1)\leqslant(r,s)\leqslant(q,d_{q})$. 
		Multiply both sides of equation \cref{eq:a_norm} by $\hat{C}\geqslant\lambda_{q+1}$ and subtract \cref{eq:first_part}, then we arrive at
		\begin{equation*}
			\begin{aligned}
				&\hat{C}\left\Vert E_{h_{n}}^{(i)}u_{ij}-\tilde{E}_{n}^{(i)}u_{ij} \right\Vert_{a}^{2}-\left\Vert\sum_{r=1}^{q}\sum_{s=1}^{d_{r}}\lambda_{rs}^{h_{n}}\left(\alpha_{rs}u_{rs}^{h_{n}}-\beta_{rs}u_{rs}^{(n)}\right)\right\Vert_{b}^{2}\\=&\sum_{r=1}^{q}\sum_{s=1}^{d_{r}}\left(\left(\hat{C}-\lambda_{rs}^{h_{n}}\right)\lambda_{rs}^{h_{n}}\alpha_{rs}^{2}+\left(\hat{C}\lambda_{rs}^{(n)}-\left(\lambda_{rs}^{h_{n}}\right)^{2}\right)\beta_{rs}^{2}\right)\\&-2\sum_{(r,s),(k,l)}\left(\hat{C}-\lambda_{kl}^{h_{n}}\right)\alpha_{rs}\beta_{kl}a\left(u_{rs}^{h_{n}}, u_{kl}^{(n)}\right)\\=&\sum_{r=1}^{q}\sum_{s=1}^{d_{r}}\left\Vert\sqrt{\hat{C}-\lambda_{rs}^{h_{n}}}\left(\alpha_{rs}u_{rs}^{h_{n}}-\beta_{rs}u_{rs}^{(n)}\right)\right\Vert_{a}^{2}+\sum_{r=1}^{q}\sum_{s=1}^{d_{r}}\lambda_{rs}^{h_{n}}\left(\lambda_{rs}^{(n)}-\lambda_{rs}^{h_{n}}\right)\beta_{rs}^{2}\geqslant0,
			\end{aligned}
		\end{equation*}
		since for $(1,1)\leqslant(r,s)\leqslant(q,d_{q})$, $\left\Vert u_{rs}^{h_{n}}\right\Vert^{2}_{a}=\lambda_{rs}^{h_{n}}$ and $\left\Vert u_{rs}^{(n)}\right\Vert^{2}_{a}=\lambda_{rs}^{(n)}$, and $\lambda_{rs}^{(n)}\geqslant\lambda_{rs}^{h_{n}}$ by the minimum-maximum principle. Namely, we get
		\begin{align}\label{ine:pere}
			\left\Vert\sum_{r=1}^{q}\sum_{s=1}^{d_{r}}\lambda_{rs}^{h_{n}}\left(\alpha_{rs}u_{rs}^{h_{n}}-\beta_{rs}u_{rs}^{(n)}\right)\right\Vert_{b}^{2}\lesssim \left\Vert E_{h_{n}}^{(i)}u_{ij}-\tilde{E}_{n}^{(i)}u_{ij} \right\Vert_{a}^{2}.
		\end{align}
		Subsequently, it follows from the fact
		\begin{align*}
			\sum_{r=1}^{q}\sum_{s=1}^{d_{r}}\beta_{rs}^{2}=\left\Vert \tilde{E}_{n}^{(i)}u_{ij}\right\Vert_{b}^{2}\lesssim \left\Vert \tilde{E}_{n}^{(i)}u_{ij}\right\Vert_{a}^{2}\leqslant\left\Vert u_{ij}\right\Vert_{a}^{2}=\lambda_{i}
		\end{align*}
		that
		\begin{align*}
			\left\Vert\sum_{r=1}^{q}\sum_{s=1}^{d_{r}}\left(\lambda_{rs}^{h_{n}}-\lambda_{rs}^{(n)}\right)\beta_{rs}u_{rs}^{(n)}\right\Vert_{b}^{2}=\sum_{r=1}^{q}\sum_{s=1}^{d_{r}}\left(\lambda_{rs}^{h_{n}}-\lambda_{rs}^{(n)}\right)^{2}\beta_{rs}^{2}\lesssim\sum_{r=1}^{q}\sum_{s=1}^{d_{r}}\left|\lambda_{rs}^{h_{n}}-\lambda_{rs}^{(n)}\right|^{2},
		\end{align*}
		which together with \cref{ine:firrrrrr} and \cref{ine:pere} yields that
		\begin{equation}\label{ine:resti1111}
			\begin{aligned}
				&\sum_{T\in\mathscr{T}_{h_{n}}}h_{T}^{2}\left\Vert\sum_{r=1}^{q}\sum_{s=1}^{d_{r}}\alpha_{rs}\lambda_{rs}^{h_{n}}u_{rs}^{h_{n}}-\sum_{r=1}^{q}\sum_{s=1}^{d_{r}}\beta_{rs}\lambda_{rs}^{(n)}u_{rs}^{(n)}\right\Vert_{0,T}^{2}\\\lesssim& \left\Vert E_{h_{n}}^{(i)}u_{ij}-\tilde{E}_{n}^{(i)}u_{ij} \right\Vert_{a}^{2}+\sum_{i=1}^{q}\sum_{s=1}^{d_{r}}\left|\lambda_{rs}^{h_{n}}-\lambda_{rs}^{(n)}\right|^{2}.
			\end{aligned}
		\end{equation}
		For the second part of \cref{ine:resti}, we obtain from the inverse inequality that
		\begin{align*}
			&\sum_{T\in\mathscr{T}_{h_{n}}}h_{T}^{2}\left\Vert\mathcal{L}E_{h_{n}}^{(i)}u_{ij}-\mathcal{L}\tilde{E}_{n}^{(i)}u_{ij} \right\Vert_{0,T}^{2}\\\lesssim&\sum_{T\in\mathscr{T}_{h_{n}}}h_{T}^{2}\left\Vert\nabla\cdot\left(A\nabla\left(E_{h_{n}}^{(i)}u_{ij}-\tilde{E}_{n}^{(i)}u_{ij}\right)\right)\right\Vert_{0,T}^{2}+\sum_{T\in\mathscr{T}_{h_{n}}}h_{T}^{2}\left\Vert c\left(E_{h_{n}}^{(i)}u_{ij}-\tilde{E}_{n}^{(i)}u_{ij}\right)\right\Vert_{0,T}
			^{2}\\\lesssim&\sum_{T\in\mathscr{T}_{h_{n}}}\left\Vert A\nabla\left(E_{h_{n}}^{(i)}u_{ij}-\tilde{E}_{n}^{(i)}u_{ij}\right)\right\Vert_{0,T}^{2}+\left\Vert E_{h_{n}}^{(i)}u_{ij}-\tilde{E}_{n}^{(i)}u_{ij}\right\Vert_{b}^{2}
			\\\lesssim&\left\Vert E_{h_{n}}^{(i)}u_{ij}-\tilde{E}_{n}^{(i)}u_{ij}\right\Vert_{a}^{2},
		\end{align*}
		and conclude from \cref{ine:resti} and \cref{ine:resti1111} that
		\begin{equation}\label{ine:res_part}
			\begin{aligned}
				&\sum_{T\in\mathscr{T}_{h_{n}}}h_{T}^{2}\left\Vert\mathscr{R}_{T}(E_{h_{n}}^{(i)}u_{ij})-\tilde{\mathscr{R}}_{T}(\tilde{E}_{n}^{(i)}u_{ij}) \right\Vert_{0,T}^{2}\\\lesssim&\left\Vert E_{h_{n}}^{(i)}u_{ij}-\tilde{E}_{n}^{(i)}u_{ij}\right\Vert_{a}^{2}+\sum_{r=1}^{q}\sum_{s=1}^{d_{r}}\left|\lambda_{rs}^{h_{n}}-\lambda_{rs}^{(n)}\right|^{2}.
			\end{aligned}  
		\end{equation}
		
		Second, for the jump residual part, we obtain from the trace inequality and the inverse inequality that
		\begin{equation}\label{ine:jum_part}
			\begin{aligned}
				&\sum_{T\in\mathscr{T}_{h_{n}}}\sum_{e\in\mathscr{E}_{h_{n}}, e\subset\partial T}h_{e}\left\Vert\mathscr{J}_{e}(E_{h_{n}}^{(i)}u_{ij})-\tilde{\mathscr{J}}_{e}(\tilde{E}_{n}^{(i)}u_{ij})\right\Vert_{0,e}^{2}\\=&\sum_{T\in\mathscr{T}_{h_{n}}}\sum_{e\in\mathscr{E}_{h_{n}}, e\subset\partial T}h_{e}\left\Vert \llbracket A \nabla \left(E_{h_{n}}^{(i)}u_{ij}-\tilde{E}_{n}^{(i)}u_{ij}\right) \rrbracket_e \cdot \nu_e\right\Vert_{0,e}^{2}\\\lesssim&\sum_{T\in\mathscr{T}_{h_{n}}}\sum_{e\in\mathscr{E}_{h_{n}}, e\subset\partial T}h_{e}\left( h_{e}^{-1} \left\Vert A \nabla \left( E_{h_{n}}^{(i)}u_{ij} - \tilde{E}_{n}^{(i)}u_{ij} \right) \right\Vert_{L^{2}(w_{e})}^{2} \right. \\
				&\left. + h_{e} \left\Vert \nabla\cdot \left( A\nabla \left( E_{h_{n}}^{(i)}u_{ij} - \tilde{E}_{n}^{(i)}u_{ij} \right) \right) \right\Vert_{L^{2}(w_{e})}^{2} \right)\\\lesssim&\sum_{T\in\mathscr{T}_{h_{n}}}\sum_{e\in\mathscr{E}_{h_{n}}, e\subset\partial T}\left\Vert A \nabla \left( E_{h_{n}}^{(i)}u_{ij} - \tilde{E}_{n}^{(i)}u_{ij} \right) \right\Vert_{L^{2}(w_{e})}^{2}\\\lesssim&\left\Vert E_{h_{n}}^{(i)}u_{ij} - \tilde{E}_{n}^{(i)}u_{ij}\right\Vert_{a}^{2},
			\end{aligned}
		\end{equation}
		where $\omega(e):=T^{+}\cup T^{-}$ for $T^{+}, T^{-}\in \mathscr{T}_{h_{n}}$ $e=T^{+}\cap T^{-}$.
		
		In summary, combing with the triangle inequality, \cref{ine:res_part} and \cref{ine:jum_part}, we have 
		\begin{equation}\label{ine:summm}
			\begin{aligned}
				&\left|\eta_{h_{n}}(E_{h_{n}}^{(i)}u_{ij}, \Omega)-\tilde{\eta}_{h_{n}}(\tilde{E}_{n}^{(i)}u_{ij}, \Omega)\right|^{2}\\\leqslant&\sum_{T\in\mathscr{T}_{h_{n}}}\left(h_{T}^{2}\left|\left\Vert\mathscr{R}_{T}(E_{h_{n}}^{(i)}u_{ij})\right\Vert_{0,T}-\left\Vert\tilde{\mathscr{R}}_{T}(\tilde{E}_{n}^{(i)}u_{ij}) \right\Vert_{0,T}\right|^{2}\right.\\
				&\left.+\sum_{e\in\mathscr{E}_{h_{n}}, e\subset\partial T}h_{e}\left|\left\Vert\mathscr{J}_{e}(E_{h_{n}}^{(i)}u_{ij})\right\Vert_{0,e}-\left\Vert\tilde{\mathscr{J}}_{e}(\tilde{E}_{n}^{(i)}u_{ij}) \right\Vert_{0,e}\right|^{2}\right)\\\leqslant&\sum_{T\in\mathscr{T}_{h_{n}}}\left(h_{T}^{2}\left\Vert\mathscr{R}_{T}(E_{h_{n}}^{(i)}u_{ij})-\tilde{\mathscr{R}}_{T}(\tilde{E}_{n}^{(i)}u_{ij}) \right\Vert_{0,T}^{2}\right.\\
				&\left.+\sum_{e\in\mathscr{E}_{h_{n}}, e\subset\partial T}h_{e}\left\Vert\mathscr{J}_{e}(E_{h_{n}}^{(i)}u_{ij})-\tilde{\mathscr{J}}_{e}(\tilde{E}_{n}^{(i)}u_{ij})\right\Vert_{0,e}^{2}\right)\\\lesssim&\left\Vert E_{h_{n}}^{(i)}u_{ij}-\tilde{E}_{n}^{(i)}u_{ij}\right\Vert_{a}^{2}+\sum_{r=1}^{q}\sum_{s=1}^{d_{r}}\left|\lambda_{rs}^{h_{n}}-\lambda_{rs}^{(n)}\right|^{2},
			\end{aligned}
		\end{equation}
		which completes the proof.
	\end{proof}
	\subsection{Proof of \Cref{thm:dof}}\label{proof:thm:dof}
	\begin{proof}
		We obtain from the triangle inequality that
		\begin{equation*}
			\begin{aligned}
				&\sum_{T\in\mathscr{T}_{h_{n}}}\sum_{i=1}^{q}\sum_{j=1}^{d_{i}}\left|\eta_{h_{n}}(E_{h_{n}}^{(i)}u_{ij}, T)-\tilde{\eta}_{h_{n}}(\tilde{E}_{n}^{(i)}u_{ij}, T)\right|^{2}\\\leqslant&\sum_{i=1}^{q}\sum_{j=1}^{d_{i}}\sum_{T\in\mathscr{T}_{h_{n}}}\left(h_{T}^{2}\left|\left\Vert\mathscr{R}_{T}(E_{h_{n}}^{(i)}u_{ij})\right\Vert_{0,T}-\left\Vert\tilde{\mathscr{R}}_{T}(\tilde{E}_{n}^{(i)}u_{ij}) \right\Vert_{0,T}\right|^{2}\right.\\
				&\left.+\sum_{e\in\mathscr{E}_{h_{n}}, e\subset\partial T}h_{e}\left|\left\Vert\mathscr{J}_{e}(E_{h_{n}}^{(i)}u_{ij})\right\Vert_{0,e}-\left\Vert\tilde{\mathscr{J}}_{e}(\tilde{E}_{n}^{(i)}u_{ij}) \right\Vert_{0,e}\right|^{2}\right),
			\end{aligned}
		\end{equation*}
		which together with \cref{ine:func_veccccc}, \cref{ine:res_part}, \cref{ine:jum_part} and \cref{ine:approxxxx} yields that
		\begin{equation}\label{ine:estimatttt}
			\begin{aligned}
				&\sum_{T\in\mathscr{T}_{h_{n}}}\sum_{i=1}^{q}\sum_{j=1}^{d_{i}}\left|\eta_{h_{n}}(E_{h_{n}}^{(i)}u_{ij}, T)-\tilde{\eta}_{h_{n}}(\tilde{E}_{n}^{(i)}u_{ij}, T)\right|^{2}\\\leqslant&\sum_{i=1}^{q}\sum_{j=1}^{d_{i}}\tilde{C}\left(\left\Vert E_{h_{n}}^{(i)}u_{ij}-\tilde{E}_{n}^{(i)}u_{ij} \right\Vert_{a}^{2}+\sum_{r=1}^{q}\sum_{s=1}^{d_{r}}\left|\lambda_{rs}^{h_{n}}-\lambda_{rs}^{(n)}\right|^{2}\right)\\\leqslant& C\sum_{i=1}^{q}\left(\operatorname{dist}^{2}_{a}\left(M_{h_{n}}(\lambda_{i}), \tilde{M}_{n}(\lambda_{i})\right)+\sum_{j=1}^{d_{i}}\left|\lambda_{ij}^{h_{n}}-\lambda_{ij}^{(n)}\right|^{2}\right)\leqslant C\rho\tilde{\eta}_{h_{n}}^{2}\left(\tilde{\mathcal{E}}_{n}U, \Omega\right).
			\end{aligned}
		\end{equation}
		
		We see from \Cref{prop:error_est} and \cref{ine:estimatttt} that
		\begin{equation}
			\begin{aligned}\label{ine:esssndh}
				\eta_{h_{n}}^{2}(\mathcal{E}_{h_{n}}U,\Omega)\leqslant&\left(\tilde{\eta}_{h_{n}}(\tilde{\mathcal{E}}_{n}U,\Omega)+\sqrt{C\sum_{i=1}^{q}\left(\operatorname{dist}^{2}_{a}\left(M_{h_{n}}(\lambda_{i}), \tilde{M}_{n}(\lambda_{i})\right)+\sum_{j=1}^{d_{i}}\left|\lambda_{ij}^{h_{n}}-\lambda_{ij}^{(n)}\right|^{2}\right)}\right)^{2}\\\leqslant&\left(1+\sqrt{C\rho}\right)^{2}\tilde{\eta}_{h_{n}}^{2}(\tilde{\mathcal{E}}_{n}U,\Omega).
			\end{aligned}
		\end{equation}
		We derive from the triangle inequality and Cauchy-Schwarz inequality that
		\begin{align*}
			&\left|\sum_{T \in \hat{\mathscr{T}}_{h_{n}}}\eta_{h_{n}}^{2}(\mathcal{E}_{h_{n}}U,T)-\sum_{T \in \hat{\mathscr{T}}_{h_{n}}}\tilde{\eta}_{h_{n}}^{2}(\tilde{\mathcal{E}}_{n}U,T)\right|\leqslant\sum_{T \in \hat{\mathscr{T}}_{h_{n}}}\left|\eta_{h_{n}}^{2}(\mathcal{E}_{h_{n}}U,T)-\tilde{\eta}_{h_{n}}^{2}(\tilde{\mathcal{E}}_{n}U,T)\right|\\\leqslant&\sum_{T\in\mathscr{T}_{h_{n}}}\sum_{i=1}^{q}\sum_{j=1}^{d_{i}}\left|\eta_{h_{n}}(E_{h_{n}}^{(i)}u_{ij}, T)+\tilde{\eta}_{h_{n}}(\tilde{E}_{n}^{(i)}u_{ij}, T)\right|\left|\eta_{h_{n}}(E_{h_{n}}^{(i)}u_{ij}, T)-\tilde{\eta}_{h_{n}}(\tilde{E}_{n}^{(i)}u_{ij}, T)\right|\\\leqslant&\sqrt{\left(\sum_{T\in\mathscr{T}_{h_{n}}}\sum_{i=1}^{q}\sum_{j=1}^{d_{i}}\left|\eta_{h_{n}}(E_{h_{n}}^{(i)}u_{ij}, T)+\tilde{\eta}_{h_{n}}(\tilde{E}_{n}^{(i)}u_{ij}, T)\right|^{2}\right)}\\&\times\sqrt{\left(\sum_{T\in\mathscr{T}_{h_{n}}}\sum_{i=1}^{q}\sum_{j=1}^{d_{i}}\left|\eta_{h_{n}}(E_{h_{n}}^{(i)}u_{ij}, T)-\tilde{\eta}_{h_{n}}(\tilde{E}_{n}^{(i)}u_{ij}, T)\right|^{2}\right)}\\\leqslant&\sqrt{2\left(\eta_{h_{n}}^{2}(\mathcal{E}_{h_{n}}U,\Omega)+\tilde{\eta}_{h_{n}}^{2}(\tilde{\mathcal{E}}_{n}U,\Omega)\right)C\rho\tilde{\eta}_{h_{n}}^{2}\left(\tilde{\mathcal{E}}_{n}U, \Omega\right)},
		\end{align*}
		which together with \cref{ine:esssndh} leads to
		\begin{align*}
		    &\left|\sum_{T \in \hat{\mathscr{T}}_{h_{n}}}\eta_{h_{n}}^{2}(\mathcal{E}_{h_{n}}U,T)-\sum_{T \in \hat{\mathscr{T}}_{h_{n}}}\tilde{\eta}_{h_{n}}^{2}(\tilde{\mathcal{E}}_{n}U,T)\right|\\\leqslant&\sqrt{2\left(C\rho+2\sqrt{C\rho}+2\right)C\rho}\tilde{\eta}_{h_{n}}^{2}\left(\tilde{\mathcal{E}}_{n}U, \Omega\right)
		\end{align*}
		Hence, we obtain from \cref{ine:dor_eta} and \cref{ine:esssndh} that 
		\begin{align*}
			\sum_{T \in \hat{\mathscr{T}}_{h_{n}}}\eta_{h_{n}}^{2}(\mathcal{E}_{h_{n}}U,T)\geqslant&\sum_{T \in \hat{\mathscr{T}}_{h_{n}}}\tilde{\eta}_{h_{n}}^{2}(\tilde{\mathcal{E}}_{n}U,T)-\sqrt{2\left(C\rho+2\sqrt{C\rho}+2\right)C\rho}\tilde{\eta}_{h_{n}}^{2}\left(\tilde{\mathcal{E}}_{n}U, \Omega\right)\\\geqslant&\left(\theta-\sqrt{2\left(C\rho+2\sqrt{C\rho}+2\right)C\rho}\right)\tilde{\eta}_{h_{n}}^{2}\left(\tilde{\mathcal{E}}_{n}U, \Omega\right)\\\geqslant&\frac{\theta-\sqrt{2\left(C\rho+2\sqrt{C\rho}+2\right)C\rho}}{\left(1+\sqrt{C\rho}\right)^{2}}\eta_{h_{n}}^{2}(\mathcal{E}_{h_{n}}U,\Omega)\geqslant\tilde{\theta}\eta_{h_{n}}^{2}(\mathcal{E}_{h_{n}}U,\Omega),
		\end{align*}
		which completes the proof.
	\end{proof}

	\bibliographystyle{siamplain}
	\bibliography{references}
\end{document}